\numberwithin{equation}{section}
\theoremstyle{plain}
\newtheorem{thm}{Theorem}[section]
\newtheorem{prop}[thm]{Proposition}
\newtheorem{defi}[thm]{Definition}
\newtheorem{lem}[thm]{Lemma}
\newtheorem{cor}[thm]{Corollary}
\theoremstyle{remark}
\newtheorem{rema}[thm]{Remark}
\title[Connection problems]{Connection problems for quantum affine KZ equations and integrable lattice models}
\author{Jasper V. Stokman}
\address{KdV Institute for Mathematics, University of Amsterdam,
Science Park 904, 1098 XH Amsterdam, The Netherlands \& IMAPP, 
Radboud University, Heyendaalseweg 135, 6525 AJ Nijmegen, The Netherlands.}
\email{j.v.stokman@uva.nl}
\subjclass[2000]{}
\begin{document}
\keywords{}
\begin{abstract}
Cherednik attached to an affine Hecke algebra module a compatible system of difference equations, called quantum affine Knizhnik-Zamolodchikov (KZ) equations.
In case of a principal series module we construct a basis of power series solutions of the quantum affine KZ equations.
Relating the bases for different asymptotic sectors gives rise to a Weyl group cocycle, which we compute explicitly
in terms of theta functions. 

For the spin representation of the affine Hecke algebra of type $C$
the quantum affine KZ equations become the boundary qKZ equations associated to the Heisenberg spin-$\frac{1}{2}$
XXZ chain.
We show that in this special case the
results lead to an explicit $4$-parameter family of elliptic solutions of the dynamical reflection equation associated to Baxter's $8$-vertex face dynamical $R$-matrix. We use these solutions to define an explicit $9$-parameter elliptic family of boundary quantum Knizhnik-Zamolodchikov-Bernard (KZB) equations.
\end{abstract}
\maketitle
\setcounter{tocdepth}{2}
\tableofcontents

\section{Introduction}
In \cite{CQKZ} Cherednik associates to an abstract affine $R$-matrix $\{\mathcal{R}_\alpha\}_\alpha$, labelled by the roots $\alpha$ of an affine root system,
a compatible system of equations called quantum affine KZ equations.
For type A affine $R$-matrices can be constructed using the braiding of quantum affine algebras. The resulting quantum affine KZ equations become the 
Frenkel-Reshetikhin-Smirnov qKZ equations \cite{Sm,FR}. For type C affine $R$-matrices arise naturally in the context of integrable lattice models with boundaries.
The corresponding 
quantum affine KZ equations are called
boundary qKZ equations. In this case quantum affine symmetric pairs
\cite{Kolb} produce examples, see, e.g., \cite{DM,BK}. 
Besides the quantum group approach, which always leads to quantum affine KZ equations of classical type, one can attach quantum affine KZ equations to affine Hecke algebra modules.
The associated affine $R$-matrices are constructed
using the affine intertwiners of the 
double affine Hecke algebra \cite{CQInd}.

In special cases the affine $R$-matrices can be obtained from both the quantum group and the Hecke algebra construction. The underlying actions of the quantum group and Hecke algebra are related by quantum Schur-Weyl type dualities. 

In this paper we construct bases of power series solutions of quantum affine KZ equations associated to principal series modules of the affine Hecke algebra.
The bases depend on asymptotic sectors, which are given in terms of Weyl chambers of the underlying root system. 
The connection matrices relating the bases for different asymptotic sectors give rise to a Weyl group cocycle.
We solve the connection problem by deriving an explicit expression of the Weyl group cocycle in terms of theta functions.
In addition we study the applications to integrable lattice models with boundaries in detail.

The connection to
quantum integrable models is threefold. Firstly, for arbitrary root systems the difference Cherednik-Matsuo correspondence \cite{CQInd,S,S2} gives 
a bijective correspondence between the solutions of the quantum affine KZ equations associated to principal series modules
and solutions of the spectral problem of Ruij\-se\-naars-Macdonald-Koornwinder-Che\-red\-nik (RMKC) difference operators.
The RMKC difference operators are the Hamiltonians of the quantum relativistic trigonometric Calogero-Moser system, first introduced by Ruij\-se\-naars \cite{Ru} for type A. These quantum relativistic integrable models are intimately related to Macdonald-Koornwinder polynomials \cite{ChBook}. This point of view was emphasized in \cite{S2}. 

Secondly, special cases of the quantum affine KZ equations for principal series modules arise as compatibility conditions for correlation functions
and form factors of (semi-)infinite XXZ spin chains \cite{JM,JKKMW,W}. In these cases vertex operator and algebraic Bethe ansatz techniques have been employed to
construct solutions of the quantum affine KZ equations (see, e.g.,  \cite{FR,TV,Re,JM} and references therein for type A, 
and \cite{JKKMW,W,DF,DFZJ,Ka,dGP,Koj,RSV,SV,BK} for type C). These methods have the drawback that they require additional assumptions on the associated
XXZ spin chains, such as the existence of pseudo-vacuum vectors. Our construction of power series solutions of the quantum affine KZ equations is applicable without such
restrictions.

Thirdly, the solution of the connection problem
is an explicit Weyl group cocycle expressed in terms of theta functions. For classical type and for
special classes of principal series modules the cocycle
gives rise to elliptic solutions of dynamical quantum Yang-Baxter and reflection equations. 
  
In Section 3 we treat the general theory, but we start in Section \ref{section2} with a detailed discussion of 
 the applications 
  to integrable lattice models with boundaries.
 Applications to other classes of integrable lattice models, such as quasi-periodic lattice models with quantum supersymmetry or higher spin models, 
will be addressed in future work.
The crucial step in applying the general theory to integrable lattice models
is to decompose the affine Hecke algebra module underlying the pertinent integrable lattice model 
as a direct sum of principal series modules. For each principal series component the general theory from Section 3 can be applied
to obtain an associated explicit connection cocycle of elliptic type.

We treat in full detail the spin representation of the affine Hecke algebra of type C \cite{dGN,SV} 
encoding the affine Hecke algebra symmetries of the Heisenberg spin-$\frac{1}{2}$ XXZ chain with arbitrary reflecting boundary conditions on both ends. 
The associated quantum affine KZ equations are boundary qKZ equations
depending on $6$ parameters: a representation parameter, a bulk parameter, and two free parameters for each end of the XXZ spin chain with reflecting ends. The associated $K$-matrices, encoding the integrability at the boundary, are those first obtained in \cite{dVG} by direct computations. We explicitly apply the general theory from Section 3 to this context, resulting in the construction of bases of power series solutions of the boundary qKZ equations (Theorem \ref{asymptoticspin}) and the description of the associated connection matrices in terms of explicit elliptic solutions of dynamical quantum Yang-Baxter and reflection equations (Theorem \ref{Mwc}). 
It leads to a $4$-parameter family of solutions of the dynamical reflection equation with respect to Baxter's \cite{Ba} dynamical elliptic $R$-matrix, with the
representation parameter playing the role of the dynamical parameter. It can be reinterpreted as a $4$-parameter family of solutions of the 
boundary Yang-Baxter equation for Baxter's eight vertex face model. 
Solutions to the boundary Yang-Baxter equations for the eight vertex face model, or equivalently, by vertex-face correspondences, to reflection equations for the eight vertex
model itself, have been computed by direct computations before; see \cite{dVG, IK, HSFY} from the vertex perspective, \cite{FHS,BP,KH} from the face perspective, and \cite{HK} for the explicit link by vertex-face correspondences. 

The current paper thus provides a conceptual understanding of such solutions and of their free parameters. In fact, 
by the difference Cherednik-Matsuo correspondence \cite{CQInd,S,SV} the solutions of the boundary qKZ equations associated to the spin representation
correspond to solutions of the spectral problem of the (higher order) Koornwinder operators \cite{Kopap} for a special family of spectral points which are
naturally parametrized by the representation parameter of the spin representation. It results in the interpretation of the remaining 5 free parameters of the boundary qKZ equations as the free parameters in the theory of
Koornwinder polynomials (see \cite{SV} for a detailed discussion). The four free parameters in the elliptic solutions of the dynamical
reflection equation obtained from the computation of the associated connection problem
then correspond to the Askey-Wilson parameters \cite{S2} in the theory of Koornwinder polynomials \cite{Kopap}.

In \cite{Fe2,FTV}  solutions of dynamical quantum Yang-Baxter equations are used to define quantum analogs of the Knizhnik-Zamolodchikov-Bernard (KZB) equations. 
In the final part of Section \ref{section2} we construct quantum analogs of the KZB equations in the presence of boundaries. 
These boundary quantum KZB equations are defined in terms of solutions of the dynamical quantum Yang-Baxter equation and associated solutions of the left and right dynamical reflection equations.
Applied to Baxter's elliptic dynamical R-matrix
and the associated 4-parameter family of solutions of the dynamical reflection equation, we obtain
an explicit $9$-parameter elliptic family of boundary quantum Knizhnik-Zamolodchikov-Bernard (KZB) equations. 
We expect that a generalisation of the difference Cherednik-Matsuo correspondence relates their solutions to the solutions of the spectral
problem of the $9$-parameter family of the elliptic Ruijsenaars' systems of type $C$ as introduced by van Diejen \cite{vD} and Komori \& Hikami \cite{KH0}.

In Section 3 we construct the bases of power series solutions of the quantum affine KZ equations associated to principal series modules.
The explicit expressions of the associated connection matrices
in terms of theta functions are derived
using results of \cite{S2} (which dealt with the case of minimal principal series modules).
In the last part of Section 3 we explain how these results, when applied to the spin representation of the affine Hecke algebra of type $C$, 
produces the results on integrable lattice models with boundaries as described in Section 2.
 
\vspace{.3cm}
\noindent
{\bf Acknowledgments:} I thank Ivan Cherednik, Masatoshi Noumi, Nicolai Reshetikhin and Bart Vlaar for discussions and Yasushi Komori for providing me with
a copy of \cite{HK}.

\section{The connection problem for the spin-$\frac{1}{2}$ XXZ boundary qKZ equations}\label{section2}
\subsection{The spin representation}\label{spinsec}
Let $n\geq 2$ and $0<q<1$. We fix a basis $\{v_+,v_-\}$ of $\mathbb{C}^2$ and represent linear operators on $\mathbb{C}^2$ and 
$\mathbb{C}^2\otimes\mathbb{C}^2$ as matrices with respect to the ordered basis $(v_+,v_-)$ and $(v_+\otimes v_+,v_+\otimes v_-,v_-\otimes v_+,v_-\otimes v_-)$ respectively.

Let $S_n$ be the symmetric group in $n$ letters and write $W_0=S_n\ltimes (\pm 1)^n$ for the hyperoctahedral group.
Let $s_1,\ldots,s_n$ be simple reflections of $W_0$
satisfying $s_i^2=e$ and the braid relations $s_is_{i+1}s_i=s_{i+1}s_is_{i+1}$ ($1\leq i<n$),
$s_{n-1}s_ns_{n-1}s_n=s_ns_{n-1}s_ns_{n-1}$ and $s_is_j=s_js_i$ if $|i-j|>1$, with $e$ the neutral element of $W_0$.
The hyperoctahedral group $W_0$ acts on $\mathbb{C}^n$ by
\begin{equation*}
\begin{split}
s_i\mathbf{z}&=(z_1,\ldots,z_{i-1},z_{i+1},z_i,z_{i+2},\ldots,z_n),\qquad 1\leq i<n,\\
s_n\mathbf{z}&=(z_1,\ldots,z_{n-1},-z_n),
\end{split}
\end{equation*}
where $\mathbf{z}=(z_1,\ldots,z_n)$. Note that $\mathbb{Z}^n$ is $W_0$-stable.

The affine Weyl group of type $C_n$
is $W:=W_0\ltimes\mathbb{Z}^n$.
We write $\lambda\mapsto\tau(\lambda)$ for the canonical group embedding $\mathbb{Z}^n\hookrightarrow W$. The action of $W_0$ on $\mathbb{Z}^n$
(resp. $\mathbb{C}^n$) extends to an action of $W$ by
\[
\tau(\lambda)\mathbf{z}:=\mathbf{z}+\lambda=(z_1+\lambda_1,\ldots,z_n+\lambda_n),\qquad \lambda=(\lambda_1,\ldots,\lambda_n)\in\mathbb{Z}^n.
\]
Let $\{e_i\}_{i=1}^n$ be the standard orthonormal basis of $\mathbb{R}^n$ and set
\[
s_0:=\tau(e_1)s_1\cdots s_{n-1}s_ns_{n-1}\cdots s_1.
\]
Then
\[
s_0\mathbf{z}:=(1-z_1,z_2,\ldots,z_n)
\]
and $W$ is a Coxeter group with simple reflections $s_0,s_1,\ldots,s_n$. 
The associated Coxeter graph is
\begin{figure}[htbp]
\begin{center}
\includegraphics[width=7cm, height=1.5cm]{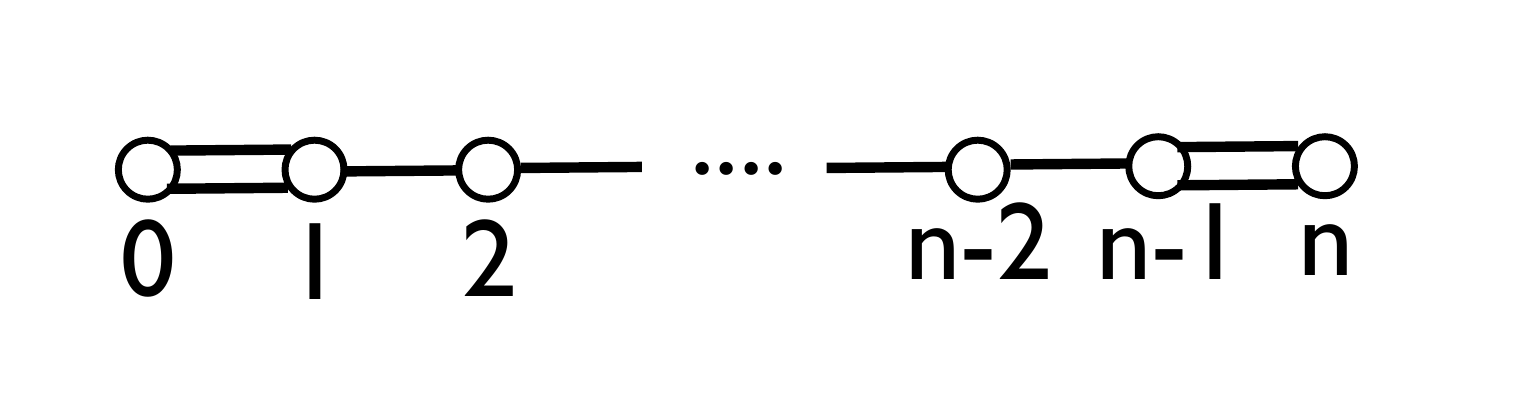}
\end{center}
\end{figure}

Note that
\begin{equation}\label{taui}
\tau(e_i)=s_{i-1}\cdots s_1s_0s_1\cdots s_{n-1}s_ns_{n-1}\cdots s_i
\end{equation}
for $i=1,\ldots,n$.

Fix parameters $\zeta^\prime,\kappa,\zeta\in\mathbb{C}$. The affine Hecke algebra $H=H(q^{\zeta^\prime},q^{\kappa},q^{\zeta}) $ of type $C_n$ is the unital associative algebra over 
$\mathbb{C}$ with generators $T_0,\ldots,T_n$ that satisfy the braid relations associated to the above Coxeter graph
and satisfy the Hecke relations
\begin{equation*}
\begin{split}
(T_0-q^{-\zeta^\prime})(T_0+q^{\zeta^\prime})&=0,\\
(T_i-q^{-\kappa})(T_i+q^{\kappa})&=0,\qquad 1\leq i<n,\\
(T_n-q^{-\zeta})(T_n+q^{\zeta})&=0.
\end{split}
\end{equation*}
The spin representation \cite{dGN,SV} of $H$ is defined as follows.
\begin{prop}\label{spinprop}
There exists a unique representation 
$\pi^{sp}_{\xi,\xi^\prime}: H\rightarrow\textup{End}_{\mathbb{C}}\bigl(\bigl(\mathbb{C}^2\bigr)^{\otimes n}\bigr)$
such that
\begin{equation*}
\pi^{sp}_{\xi,\xi^\prime}(T_i)=\left(\begin{matrix} q^{-\kappa} & 0 & 0 & 0\\
0 & 0 & 1 & 0\\
0 & 1 & q^{-\kappa}-q^{\kappa} & 0\\
0 & 0 & 0 & q^{-\kappa}\end{matrix}\right)_{i,i+1}
\end{equation*}
for $1\leq i<n$ and 
\begin{equation*}
\pi^{sp}_{\xi,\xi^\prime}(T_0)=\left(\begin{matrix} q^{-\zeta^\prime}-q^{\zeta^\prime} & q^{-\xi^\prime}\\
q^{\xi^\prime} & 0\end{matrix}\right)_1,\qquad
\pi^{sp}_{\xi,\xi^\prime}(T_n)=\left(\begin{matrix} 0 & q^{\xi}\\ q^{-\xi} & q^{-\zeta}-q^{\zeta}\end{matrix}\right)_n.
\end{equation*}
Here we have used the standard tensor leg notation to indicate on which tensor legs of the $n$-fold
tensor product space $\bigl(\mathbb{C}^2\bigr)^{\otimes n}$ the matrices act.
\end{prop}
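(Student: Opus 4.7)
The plan is to verify directly that the operators prescribed in the statement satisfy the defining relations of the affine Hecke algebra $H$. Uniqueness is immediate because $T_0,T_1,\ldots,T_n$ generate $H$, so any two representations agreeing on them coincide. For existence one must check the Hecke quadratic relations for each $T_i$ and the braid relations dictated by the Coxeter graph of type $C_n$. The Hecke relations reduce to short matrix computations: for $1\le i<n$ the matrix $\pi^{sp}_{\xi,\xi^\prime}(T_i)$ is block diagonal with respect to the decomposition of $\mathbb{C}^2\otimes\mathbb{C}^2$ into the weight subspaces $\mathbb{C}v_+\otimes v_+$, $\mathbb{C}v_+\otimes v_-\oplus\mathbb{C}v_-\otimes v_+$, $\mathbb{C}v_-\otimes v_-$, and $(x-q^{-\kappa})(x+q^\kappa)$ annihilates the matrix by inspection of trace and determinant on the inner block; the quadratic relations for $\pi^{sp}_{\xi,\xi^\prime}(T_0)$ and $\pi^{sp}_{\xi,\xi^\prime}(T_n)$ are each one $2\times 2$ check, with the off-diagonal products $q^{-\xi^\prime}q^{\xi^\prime}=1$ and $q^{\xi}q^{-\xi}=1$ yielding the required determinant $-1$.

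The braid relations split into three types. First, $T_iT_j=T_jT_i$ for $|i-j|>1$ is automatic, since the two operators act on disjoint sets of tensor legs. Second, the length-three braid relations $T_iT_{i+1}T_i=T_{i+1}T_iT_{i+1}$ for $1\le i<n-1$ involve only the operators $\pi^{sp}_{\xi,\xi^\prime}(T_j)$ with $1\le j<n$, which furnish the standard action of the finite Hecke algebra of type $A_{n-1}$ on $\bigl(\mathbb{C}^2\bigr)^{\otimes n}$ arising from the six-vertex $R$-matrix; the required identity is the classical Yang-Baxter equation for that $R$-matrix, applied on three consecutive tensor legs.

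What remains, and is the main obstacle, are the two length-$4$ boundary braid relations
\begin{equation*}
T_0T_1T_0T_1=T_1T_0T_1T_0,\qquad T_{n-1}T_nT_{n-1}T_n=T_nT_{n-1}T_nT_{n-1}.
\end{equation*}
Since $\pi^{sp}_{\xi,\xi^\prime}(T_0)$ and $\pi^{sp}_{\xi,\xi^\prime}(T_n)$ act as the identity outside tensor legs $1$ and $n$ respectively, these reduce to $4\times 4$ matrix identities on tensor legs $1,2$ and on tensor legs $n-1,n$. They are the Sklyanin-type reflection equations for the boundary $K$-matrices $\pi^{sp}_{\xi,\xi^\prime}(T_0)$ and $\pi^{sp}_{\xi,\xi^\prime}(T_n)$ against the six-vertex $R$-matrix $\pi^{sp}_{\xi,\xi^\prime}(T_1)$, and they depend non-trivially on the three boundary parameters $q^{\pm\zeta^\prime},q^{\pm\xi^\prime},q^{\pm\kappa}$ at the left end and the analogous parameters at the right end. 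I would verify them by explicit multiplication of the $4\times 4$ matrices entry by entry, exploiting that each matrix has at most two non-zero entries per row so that the expansion is short; alternatively, since the spin representation already appears in the literature on the open spin-$\tfrac12$ XXZ chain (see \cite{dGN,SV}), its existence can be cited directly.
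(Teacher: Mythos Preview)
Your proposal is correct. The paper itself does not supply a proof of this proposition at all; it simply cites \cite{dGN,SV} and moves on. Your direct-verification outline is sound: uniqueness from the generators, the Hecke quadratics via trace/determinant on the $2\times 2$ blocks, the disjoint-leg commutations for free, the type~$A$ braid relations from the standard Jimbo/Schur--Weyl representation of the finite Hecke algebra of type~$A_{n-1}$ on $(\mathbb{C}^2)^{\otimes n}$, and the two length-$4$ boundary braid relations as $4\times 4$ matrix checks on the two end legs. One small remark: the length-$3$ braid relation here is the \emph{constant} (spectral-parameter-free) braid form of the Yang--Baxter equation, not the full spectral-parameter version, so calling it ``the classical Yang--Baxter equation for that $R$-matrix'' is slightly loose; but the content is right and the computation is standard. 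Your closing sentence---that one may instead cite \cite{dGN,SV}---is exactly what the paper does.
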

The isomorphism class of $\pi^{sp}_{\xi,\xi^\prime}$ only depends on $\xi+\xi^\prime$, see \cite[Prop. 3.5]{SV}. We write $\pi^{sp}_{(\xi^\prime)}$ for the 
representation $\pi^{sp}_{0,\xi^\prime}$, so that
$\pi^{sp}_{\xi,\xi^\prime}\simeq\pi^{sp}_{(\xi+\xi^\prime)}$.
We will sometimes suppress the representation parameter $\xi$ and write $\pi^{sp}=\pi^{sp}_{(\xi)}$.
\begin{rema}\label{remarkrho}
The spin representation $\pi^{sp}$ factorizes through a representation of the two-boundary Temperley-Lieb algebra,
see \cite{dGN, SV}.
\end{rema}
For generic parameters there are two natural complex linear bases of the representation space $\bigl(\mathbb{C}^2\bigr)^{\otimes n}$ which we
denote by $\{v_{\underline{\epsilon}}\}_{\underline{\epsilon}}$ and $\{b_{\underline{\epsilon}}\}_{\underline{\epsilon}}$, where the indices
$\underline{\epsilon}=(\epsilon_1,\epsilon_2,\ldots,\epsilon_n)$ are running over $\{\pm 1\}^n$. The first basis $\{v_{\underline{\epsilon}}\}_{\underline{\epsilon}}$
is simply defined as 
\[
\bigl(\mathbb{C}^2\bigr)^{\otimes n}=\bigoplus_{\underline{\epsilon}}\mathbb{C}v_{\underline{\epsilon}},
\qquad v_{\underline{\epsilon}}:=v_{\epsilon_1}\otimes v_{\epsilon_2}\otimes\cdots\otimes v_{\epsilon_n},
\]
where $v_{\pm 1}:=v_{\pm}$. In particular, $v_{\underline{1}}=v_+^{\otimes n}$ for $\underline{1}:=(1,\ldots,1)\in\{\pm 1\}^n$.

The basis elements $v_{\underline{\epsilon}}$ can be expressed in terms of the $\pi^{sp}$-action of $H$ on the vector
$v_+^{\otimes n}$
as follows.
For any $w\in W$ and corresponding reduced expression $w=s_{i_1}s_{i_2}\cdots s_{i_r}$ ($0\leq i_j\leq n$) set
\[
T_w:=T_{i_1}T_{i_2}\cdots T_{i_r}\in H.
\]
The $T_w\in H$ are well defined (independent of the choice of reduced expression of $w\in W$). Define 
$w_{\underline{\epsilon}}\in W_0$  for a $n$-tuple $\underline{\epsilon}=(\epsilon_1,\ldots,\epsilon_n)\in\{\pm 1\}^n$ by
\begin{equation}\label{wepsilon}
w_{\underline{\epsilon}}:=(s_{i_k}s_{i_k+1}\cdots s_n)\cdots (s_{i_2}s_{i_2+1}\cdots s_n)(s_{i_1}s_{i_1+1}\cdots s_n)
\end{equation}
where $\{i_1,i_2,\ldots,i_k\}:=\{i\,\, | \,\, \epsilon_i=-1\}$ and $1\leq i_1<i_2<\cdots<i_k\leq n$. Note that
$w_{\underline{\epsilon}}(\underline{1})=\underline{\epsilon}$ with respect to the action of $W_0$ on $\{\pm 1\}^n\subset\mathbb{Z}^n$.
The following lemma is now easy to verify.
\begin{lem}\label{wepsilonlemma}
The elements
$\{w_{\underline{\epsilon}}\}_{\underline{\epsilon}}$ are the minimal coset representatives of $W_0/S_n$, and
\[
v_{\underline{\epsilon}}=\pi^{sp}(T_{w_{\underline{\epsilon}}})v_+^{\otimes n}.
\]
\end{lem}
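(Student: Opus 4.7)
The lemma contains two assertions --- that the $w_{\underline\epsilon}$ form a complete set of minimal-length representatives of $W_0/S_n$, and that $\pi^{sp}(T_{w_{\underline\epsilon}})v_+^{\otimes n}=v_{\underline\epsilon}$ --- which I would prove in that order.

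For the coset statement, I would begin with the observation that $W_0=S_n\ltimes(\pm 1)^n$ acts transitively on $\{\pm 1\}^n$ with $\operatorname{Stab}_{W_0}(\underline 1)=S_n$, so the map $wS_n\mapsto w(\underline 1)$ is a bijection from $W_0/S_n$ onto $\{\pm 1\}^n$. Tracing the action of the successive factors $s_{i_j}s_{i_j+1}\cdots s_n$ on $\underline 1$, the rightmost $s_n$ flips the sign at position $n$ from $+$ to $-$, and the ensuing $s_{n-1},\ldots,s_{i_j}$ transport this $-$ leftward into position $i_j$; the $-$'s placed at earlier positions $i_1,\ldots,i_{j-1}<i_j$ are untouched. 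Thus $w_{\underline\epsilon}(\underline 1)=\underline\epsilon$, and the $\{w_{\underline\epsilon}\}$ exhaust the cosets without repetition. To upgrade this to the minimality claim and, simultaneously, show that \eqref{wepsilon} is reduced, I would invoke the standard criterion that $w\in W_0$ is the shortest representative of $wS_n$ iff $w(\alpha_i)$ is a positive root for every simple root $\alpha_i=e_i-e_{i+1}$ of $S_n$, $1\leq i<n$. The vectors $w_{\underline\epsilon}(e_\ell)$ can be computed inductively from \eqref{wepsilon}, and, combined with $i_1<\cdots<i_k$, a short case analysis confirms the criterion. Reducedness then follows by matching the obvious upper bound $\sum_{j=1}^k(n-i_j+1)$ on $\ell(w_{\underline\epsilon})$ with the value supplied by the positive-root inversion count.

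For the second assertion I would induct on $k$, the number of $-$'s in $\underline\epsilon$. Let $\underline\epsilon'$ agree with $\underline\epsilon$ except at position $i_k$, where it carries $+$; then $w_{\underline\epsilon}=(s_{i_k}\cdots s_n)w_{\underline{\epsilon}'}$, and reducedness yields $T_{w_{\underline\epsilon}}=T_{i_k}T_{i_k+1}\cdots T_n\,T_{w_{\underline{\epsilon}'}}$. The inductive hypothesis thereby reduces the task to computing $\pi^{sp}(T_{i_k}\cdots T_n)v_{\underline{\epsilon}'}$ using the matrices of Proposition \ref{spinprop} with $\xi=0$ (since $\pi^{sp}=\pi^{sp}_{0,\xi}$). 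Now $\pi^{sp}(T_n)$ sends the $v_+$ at position $n$ of $v_{\underline{\epsilon}'}$ to $v_-$, and each subsequent $\pi^{sp}(T_\ell)$ for $\ell=n-1,n-2,\ldots,i_k$ acts on two adjacent legs in state $v_+\otimes v_-$ and maps them to $v_-\otimes v_+$, walking the new $v_-$ leftward by one position per step. The inequality $i_{k-1}<i_k$ ensures that positions $i_k,\ldots,n-1$ all carry $v_+$ prior to being acted on, so no scalar from the diagonal matrix entries intrudes, and the $v_-$'s previously placed at $i_1,\ldots,i_{k-1}$ are never disturbed. The resulting state is $v_{\underline\epsilon}$.

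The principal obstacle I foresee is the root-system bookkeeping that certifies the minimality criterion and reducedness of \eqref{wepsilon}; once these are in hand, the inductive computation in the spin representation is essentially mechanical, relying only on $T_\ell(v_+\otimes v_-)=v_-\otimes v_+$ for $1\leq\ell<n$ and $T_n(v_+)=v_-$.
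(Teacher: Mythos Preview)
Your proposal is correct and carries out precisely the direct verification the paper has in mind; the paper itself offers no argument beyond the remark that the lemma ``is now easy to verify,'' and your inductive unwinding of $T_{i_k}\cdots T_n$ together with the standard $W_0^I$ criterion $w(\alpha_i)\in R_0^+$ for $i\in I$ is exactly the kind of check intended.
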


The second basis $\{b_{\underline{\epsilon}}\}_{\underline{\epsilon}}$ of $\bigl(\mathbb{C}^2\bigr)^{\otimes n}$ is defined 
as follows. Set for $i\in\{1,\ldots,n\}$,
\[
Y_i:=T_{i-1}^{-1}\cdots T_1^{-1}T_0T_1\cdots T_{n-1}T_nT_{n-1}\cdots T_i\in H
\]
(cf. \eqref{taui}). Then $\lbrack Y_i,Y_j\rbrack=0$ and the elements $T_1,\ldots,T_n,Y_1^{\pm 1},\ldots, Y_n^{\pm 1}$ algebraically generate $H$.
The affine Hecke algebra can be completely characterised in terms of these generators, leading to the Bernstein-Zelevinsky presentation of $H$ (see \cite{Lu}).
 We write $Y^\lambda=Y_1^{\lambda_1}Y_2^{\lambda_2}\cdots Y_n^{\lambda_n}$ for $\lambda=(\lambda_1,\ldots,\lambda_n)\in\mathbb{Z}^n$.
 
 For $\eta=(\eta_1,\eta_2,\ldots,\eta_n)\in\mathbb{C}^n$ and a left $H$-module $V$ 
 set
 \begin{equation*}
 \begin{split}
 V_\eta:=&\{v\in V \,\, | \,\, Y_iv=q^{-\eta_i}v\quad \forall\, i \}\\
 =&\{v\in V \,\, | \,\, Y^\lambda v=q^{-(\lambda,\eta)}v\quad \forall\, \lambda\in\mathbb{Z}^n \}
 \end{split}
 \end{equation*}
 for the common eigenspace of the commuting operators $Y_i$ on $V$ with eigenvalues $q^{-\eta_i}$, where we have
used the standard bilinear form $\bigl(\mathbf{z},\mathbf{w}\bigr)=\sum_{i=1}^nz_iw_i$ on $\mathbb{C}^n$.
 
 Set
 \begin{equation}\label{xi}
 \gamma:=(\xi+(n-1)\kappa,\xi+(n-3)\kappa,\ldots,\xi+(1-n)\kappa)\in\mathbb{C}^n.
 \end{equation}
It follows from the fact that $\bigl(\mathbb{C}^2\bigr)^{\otimes n}$ is a principal series module of $H$ with central character $q^{-\gamma}$ (see \cite[Prop. 3.5]{SV}
and Subsection \ref{link}) that the spin representation $\bigl(\pi^{sp}_{(\xi)},\bigl(\mathbb{C}^2\bigr)^{\otimes n}\bigr)$ decomposes for generic parameters as
 \[
\bigl(\mathbb{C}^{2}\bigr)^{\otimes n}=\bigoplus_{\underline{\epsilon}\in\{\pm 1\}^n}\bigl(\bigl(\mathbb{C}^2\bigr)^{\otimes n}\bigr)_{w_{\underline{\epsilon}}\gamma}
 \]
with the common eigenspaces $\bigl(\bigl(\mathbb{C}^2\bigr)^{\otimes n}\bigr)_{w_{\underline{\epsilon}}\gamma}$ being one-dimensional. 
 The generic conditions on the parameters can be made precise, see \cite[Prop. 2.12]{S} and Subsection \ref{psr}. 
 
 The basis element $b_{\underline{\epsilon}}$ is 
a particular choice of nonzero element from $\bigl(\bigl(\mathbb{C}^2\bigr)^{\otimes n}\bigr)_{w_{\underline{\epsilon}}\gamma}$.
To define it we need to introduce a bit more notation.
Let $R_0=\{\pm e_i\pm e_j\}_{1\leq i\not=j\leq n}\cup\{\pm e_i\}_{i=1}^n$ be the root system of type $B_n$. We fix
\[
\{\alpha_1,\ldots,\alpha_{n-1},\alpha_n\}=\{e_1-e_2,\ldots,e_{n-1}-e_n,e_n\}
\]
as basis of $R_0$ and write $R_0^+$ (respectively $R_0^-$) for the corresponding set of positive (respectively negative) roots in $R_0$. The simple
reflections $s_1,\ldots,s_n$ of $W_0$ correspond with the reflections in the simple roots $\alpha_1,\ldots,\alpha_n$.
Let $\leq$ be the associated Bruhat order on $W_0$.
Set for roots $\alpha\in R_0$,
\begin{equation*}
N_\alpha(\mathbf{z}):=
\begin{cases} 
\frac{(1-q^{(\alpha,\mathbf{z})})}{q^{\kappa}(1-q^{-2\kappa+(\alpha,\mathbf{z})})}\qquad
&\hbox{ if }\quad \|\alpha\|^2=2,\\
\frac{(1-q^{2(\alpha,\mathbf{z})})}
{q^\zeta(1-q^{-\zeta-\zeta^\prime+(\alpha,\mathbf{z})})(1+q^{-\zeta+\zeta^\prime+(\alpha,\mathbf{z})})}\qquad
&\hbox{ if } \quad \|\alpha\|^2=1.
\end{cases}
\end{equation*}
It is easy to check that if $\alpha\in R_0^+\cap w_{\underline{\epsilon}}^{-1}R_0^-$ ($\underline{\epsilon}\in\{\pm 1\}^n$) then 
$\mathbf{z}\mapsto N_\alpha(\mathbf{z})$ is regular at $\mathbf{z}=\gamma$
(see also Subsection \ref{psr}). The following lemma follows from standard techniques involving affine Hecke algebra intertwiners (see \cite[\S 2.5]{S} and Subsection \ref{psr}). 
\begin{lem}\label{bepsilon} 
Fix generic parameters and fix $\underline{\epsilon}\in\{\pm 1\}^n$. There exists a unique $0\not=b_{\underline{\epsilon}}\in\bigl(\bigl(\mathbb{C}^2\bigr)^{\otimes n}\bigr)_{w_{\underline{\epsilon}}\gamma}$
satisfying 
\[
b_{\underline{\epsilon}}=N_{\underline{\epsilon}}v_{\underline{\epsilon}}+
\sum_{\underline{\epsilon^\prime}\in\{\pm 1\}^n: w_{\underline{\epsilon^\prime}}<w_{\underline{\epsilon}}}
L_{\underline{\epsilon^\prime}}v_{\underline{\epsilon^\prime}}
\]
for certain coefficients $L_{\underline{\epsilon^\prime}}\in\mathbb{C}$, where
\[
N_{\underline{\epsilon}}:=\prod_{\alpha\in R_0^+\cap w_{\underline{\epsilon}}^{-1}R_0^-}N_\alpha(\gamma).
\]
The vectors $b_{\underline{\epsilon}}$ ($\underline{\epsilon}\in\{\pm 1\}^n$) forms a basis of $\bigl(\mathbb{C}^2\bigr)^{\otimes n}$.
\end{lem}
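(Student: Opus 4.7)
The plan is to construct $b_{\underline{\epsilon}}$ explicitly via a chain of affine Hecke algebra intertwiners applied to $v_+^{\otimes n}$, then identify its expansion in the $v_{\underline{\epsilon}'}$-basis using Lemma \ref{wepsilonlemma}. Since $\bigl(\mathbb{C}^2\bigr)^{\otimes n}$ is a principal series $H$-module with central character $q^{-\gamma}$, the cyclic vector $v_+^{\otimes n}$ lies in the weight space of weight $\gamma$: $Y_iv_+^{\otimes n}=q^{-\gamma_i}v_+^{\otimes n}$ for every $i$.

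Next, introduce the Cherednik intertwiners $S_1,\ldots,S_n$ in a suitable localization of $H$, satisfying $S_ip(Y)=p(s_iY)S_i$ for Laurent polynomials $p$. Normalized appropriately, $S_i$ acts on a weight vector $v\in V_\eta$ (when the coefficients are regular) by $S_iv=N_{\alpha_i}(\eta)T_iv+c_i(\eta)v$, producing a vector in $V_{s_i\eta}$, where $c_i$ is a rational function. For a reduced expression $w_{\underline{\epsilon}}=s_{j_1}\cdots s_{j_r}$, set
\[
b_{\underline{\epsilon}}:=\pi^{sp}(S_{j_1}\cdots S_{j_r})v_+^{\otimes n}.
\]
The regularity required at each step is exactly the hypothesized regularity of $N_\alpha(\gamma)$ for $\alpha\in R_0^+\cap w_{\underline{\epsilon}}^{-1}R_0^-$, via the standard Coxeter-theoretic bijection $k\mapsto\beta_k:=s_{j_r}\cdots s_{j_{k+1}}\alpha_{j_k}$ from $\{1,\ldots,r\}$ onto that set.

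Expanding the product of intertwiners, the top-order term, keeping the $T_{j_k}$-piece of every factor, equals $\bigl(\prod_{k=1}^r N_{\alpha_{j_k}}(\eta_k)\bigr)T_{w_{\underline{\epsilon}}}v_+^{\otimes n}$, where $\eta_k:=s_{j_{k+1}}\cdots s_{j_r}\gamma$. By $W_0$-invariance of the bilinear form, $N_{\alpha_{j_k}}(\eta_k)=N_{\beta_k}(\gamma)$, so the product equals $N_{\underline{\epsilon}}$; and $T_{w_{\underline{\epsilon}}}v_+^{\otimes n}=v_{\underline{\epsilon}}$ by Lemma \ref{wepsilonlemma}. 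All other contributions arise by replacing some $T_{j_k}$'s by the scalars $c_{j_k}(\eta_k)$, yielding products $T_u$ with $u<w_{\underline{\epsilon}}$ in Bruhat order; rewritten in the $\{T_{w_{\underline{\epsilon}'}}\}$-basis of $H$ modulo the $S_n$-parabolic part and applied to $v_+^{\otimes n}$, they contribute only $v_{\underline{\epsilon}'}$ with $w_{\underline{\epsilon}'}<w_{\underline{\epsilon}}$.

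Uniqueness then follows from the one-dimensionality of $V_{w_{\underline{\epsilon}}\gamma}$ (generic parameter condition, \cite[Prop.\ 2.12]{S}) combined with the fixed nonzero leading coefficient $N_{\underline{\epsilon}}$, and the basis property is immediate from the weight-space decomposition. The main obstacle is the Bruhat-order bookkeeping: confirming that replacing any nonempty subset of $T_{j_k}$'s by the scalars $c_{j_k}(\eta_k)$ produces, after Hecke-algebra simplification and application to $v_+^{\otimes n}$, only $v_{\underline{\epsilon}'}$ strictly below $v_{\underline{\epsilon}}$ in the induced Bruhat order on the minimal coset representatives $\{w_{\underline{\epsilon}}\}$ of $W_0/S_n$. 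This rests on the classical property that sub-words of a reduced expression give Bruhat-smaller elements, together with the compatibility of the Bruhat order on $W_0$ with its restriction to these minimal coset representatives.
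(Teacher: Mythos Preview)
Your proposal is correct and follows essentially the same route as the paper. The paper does not give a self-contained proof of Lemma~\ref{bepsilon} but refers to ``standard techniques involving affine Hecke algebra intertwiners'' and to Subsection~\ref{psr}, where the vectors are realised as $b_{w_{\underline{\epsilon}}^{-1}}^I(\gamma)=\phi_{I,\gamma}\bigl(A_{w_{\underline{\epsilon}}^{-1}}(\gamma)v_e(w_{\underline{\epsilon}}\gamma)\bigr)$ and the triangular expansion is read off (Proposition~\ref{bb}(2), Corollary~\ref{normcor}); your chain of Cherednik intertwiners $S_{j_1}\cdots S_{j_r}$ applied to $v_+^{\otimes n}$ is exactly the same construction viewed inside the module rather than as a map between minimal principal series, and your Bruhat-order bookkeeping coincides with the paper's one-line ``by the definition of the unnormalized intertwiner''.
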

Note that $b_{\underline{1}}=v_+^{\otimes n}=v_{\underline{1}}$.

\subsection{The boundary qKZ equations}\label{rqKZ}

The Baxterization \cite[\S 4]{SV} 
of the representation $\pi^{sp}$ gives rise to a $W$-cocycle 
$\{C^u(\mathbf{z})\}_{u\in W}$ depending on two additional parameters $\upsilon$ and $\upsilon^\prime$,
which are the two additional degrees of freedom in the $C^\vee C_n$ type double affine Hecke algebra $\mathbb{H}$ containing
$H$ as a subalgebra. We do not recall here this construction of the cocycle $\{C^u(\mathbf{z})\}_{u\in W}$, which uses the intertwiners of the double
affine Hecke algebra $\mathbb{H}$, but instead will give the resulting explicit formulas for $C^u(\mathbf{z})$ directly. Details on this Baxterization
procedure can be found in \cite{ChBook, S, SV}.

The values $C^u(\mathbf{z})$ are 
$\textup{End}_{\mathbb{C}}\bigl(\bigl(\mathbb{C}^2\bigr)^{\otimes n}\bigr)$-valued
meromorphic functions in $\mathbf{z}\in\mathbb{C}^n$ satisfying
\[
C^e(\mathbf{z})=\textup{Id}_{(\mathbb{C}^2)^{\otimes n}},
\qquad C^{uv}(\mathbf{z})=C^u(\mathbf{z})C^v(u^{-1}\mathbf{z})
\]
for all $u,v\in W$ and satisfying
\begin{equation}\label{localnondyn}
\begin{split}
C^{s_0}(\mathbf{z})&=\underline{\mathcal{K}}_1\bigl(\frac{1}{2}-z_1\bigr),\\
C^{s_i}(\mathbf{z})&=P_{i,i+1}\mathcal{R}_{i,i+1}(z_i-z_{i+1}),\qquad 1\leq i<n,\\
C^{s_n}(\mathbf{z})&=\mathcal{K}_n(z_n)
\end{split}
\end{equation}
with 
\begin{equation*}
\mathcal{R}(z):=\frac{1}{1-q^{-2\kappa+z}}\left(\begin{matrix}
1-q^{-2\kappa+z} & 0 & 0 & 0\\
0 & q^{-\kappa}(1-q^z) & 1-q^{-2\kappa} & 0\\
0 & (1-q^{-2\kappa})q^z & q^{-\kappa}(1-q^z) & 0\\
0 & 0 & 0 & 1-q^{-2\kappa+z}\end{matrix}\right)
\end{equation*}
and 
\begin{equation*}
\begin{split}
\underline{\mathcal{K}}(z)&=\underline{k}(z)\left(\begin{matrix}
(q^{\zeta^\prime}-q^{-\zeta^\prime})q^{2z}+(q^{\upsilon^\prime}-q^{-\upsilon^\prime})q^z & q^{-\xi}(1-q^{2z})\\
q^{\xi}(1-q^{2z}) & q^{\zeta^\prime}-q^{-\zeta^\prime}+(q^{\upsilon^\prime}-q^{-\upsilon^\prime})q^z\end{matrix}
\right),\\
\mathcal{K}(z)&=k(z)\left(\begin{matrix} q^{\zeta}-q^{-\zeta}+(q^{\upsilon}-q^{-\upsilon})q^z & 1-q^{2z}\\
1-q^{2z} & (q^{\zeta}-q^{-\zeta})q^{2z}+(q^{\upsilon}-q^{-\upsilon})q^z\end{matrix}\right)
\end{split}
\end{equation*}
and scalar functions $\underline{k}(z)$ and $k(z)$ given by
\[
\underline{k}(z):=\frac{q^{-\zeta^\prime}}{(1-q^{-\zeta^\prime-\upsilon^\prime+z})
(1+q^{-\zeta^\prime+\upsilon^\prime+z})},\quad
k(z):=\frac{q^{-\zeta}}{(1-q^{-\zeta-\upsilon+z})(1+q^{-\zeta+\upsilon+z})},
\]
see \cite[\S 4]{SV}. 
The fact that $\{C^u(\mathbf{z})\}_{u\in W}$
is a $W$-cocycle of the form \eqref{localnondyn} is equivalent to $\mathcal{R}(z)$ being a unitary $R$-matrix:
\[
\mathcal{R}_{12}(z_1-z_2)\mathcal{R}_{13}(z_1-z_3)\mathcal{R}_{23}(z_2-z_3)=\mathcal{R}_{23}(z_2-z_3)\mathcal{R}_{13}(z_1-z_3)
\mathcal{R}_{12}(z_1-z_2),
\]
$\mathcal{R}_{21}(z)\mathcal{R}(-z)=\textup{Id}_{\mathbb{C}^2\otimes\mathbb{C}^2}$, and 
$\underline{\mathcal{K}}(z)$ respectively $\mathcal{K}(z)$ being a left respectively right unitary $K$-matrix with respect to $\mathcal{R}(z)$:
\begin{equation}\label{reflnondyn}
\begin{split}
\mathcal{R}(z_1-z_2)\underline{\mathcal{K}}_1(z_1)\mathcal{R}_{21}(z_1+z_2)\underline{\mathcal{K}}_2(z_2)&=
\underline{\mathcal{K}}_2(z_2)\mathcal{R}(z_1+z_2)\underline{\mathcal{K}}_1(z_1)\mathcal{R}_{21}(z_1-z_2),\\
\mathcal{R}_{21}(z_1-z_2)\mathcal{K}_1(z_1)\mathcal{R}(z_1+z_2)\mathcal{K}_2(z_2)&=
\mathcal{K}_2(z_2)\mathcal{R}_{21}(z_1+z_2)\mathcal{K}_1(z_1)\mathcal{R}(z_1-z_2),
\end{split}
\end{equation}
$\underline{\mathcal{K}}(z)\underline{\mathcal{K}}(-z)=\textup{Id}_{\mathbb{C}^2}=\mathcal{K}(z)\mathcal{K}(-z)$
(compare with \cite[\S 2.3]{dGN}). The equations \eqref{reflnondyn} are called {\it reflection equations}, or {\it boundary Yang-Baxter equations}.

A transfer operator can be associated to the data $\underline{\mathcal{K}}(z)$, $\mathcal{R}(z)$ and $\mathcal{K}(z)$ (see \cite{Sk}). It is the transfer operator of the finite inhomogeneous XXZ spin-$\frac{1}{2}$ chain with arbitrary reflecting boundaries at both ends, see \cite{FK,dGN,SV} and references
therein. 

In this paper we are interested in the quantum affine KZ equations related to the integrability data $\underline{\mathcal{K}}(z)$,
$\mathcal{R}(z)$ and $\mathcal{K}(z)$.
They are the following {\it boundary quantum KZ equations}
(cf. \cite{CQKZ,SV} and references therein).
\begin{defi}\label{bqKZform}
Let $f: \mathbb{C}^n\rightarrow \bigl(\mathbb{C}^2\bigr)^{\otimes n}$ be a $\bigl(\mathbb{C}^2\bigr)^{\otimes n}$-valued meromorphic function on $\mathbb{C}^n$. We say that 
$f$ is a solution of the boundary quantum KZ equations associated to the spin representation $\pi^{sp}=\pi^{sp}_{(\xi)}$ if $f$ satisfies the difference
equations
\begin{equation}\label{REFLqKZXXZ}
C^{\tau(\lambda)}(\mathbf{z})f(\mathbf{z}-\lambda)=f(\mathbf{z})\qquad \forall\, \lambda\in\mathbb{Z}^n.
\end{equation}
We write $\textup{Sol}_{sp}$ for the space of meromorphic solutions $f:  \mathbb{C}^n\rightarrow \bigl(\mathbb{C}^2\bigr)^{\otimes n}$ of \eqref{REFLqKZXXZ}.
\end{defi}
\begin{rema}
{\bf (i)} By the cocycle property of $\{C^v(\mathbf{z})\}_{v\in W}$ we obtain a $W$-action on the space of $\bigl(\mathbb{C}^2\bigr)^{\otimes n}$-valued meromorphic functions 
on $\mathbb{C}^n$ by
\[
\bigl(\nabla(v)f\bigr)(\mathbf{z}):=C^v(\mathbf{z})f(v^{-1}\mathbf{z}),\qquad v\in W.
\]
The boundary quantum KZ equations \eqref{REFLqKZXXZ} are equivalent to the equations
$\nabla(\tau(\lambda))f=f$ for all $\lambda\in\mathbb{Z}^n$. The solution space $\textup{Sol}_{sp}$
becomes a $W_0$-module by restricting the $\nabla$-action of $W_0$ to $\textup{Sol}_{sp}$.\\
{\bf (ii)} The boundary quantum KZ equations \eqref{REFLqKZXXZ} are equivalent to the equations
$\nabla(\tau(e_i))f=f$ ($i=1,\ldots,n$) which, by the cocycle property of
$\{C^u(\mathbf{z})\}_{u\in W}$, \eqref{localnondyn} and \eqref{taui}, take on the explicit form
\[
C^{\tau(-e_i)}(\mathbf{z})f(\mathbf{z}+e_i)=f(\mathbf{z}),\qquad \, i=1,\ldots,n\]
with
\begin{equation*}
\begin{split}
C^{\tau(-e_i)}(\mathbf{z})&=\mathcal{R}_{i+1,i}(z_i-z_{i+1})\mathcal{R}_{i+2,i}(z_i-z_{i+2})\cdots\mathcal{R}_{ni}(z_i-z_n)\mathcal{K}_i(z_i)\\
&\times\mathcal{R}_{in}(z_i+z_n)\cdots\mathcal{R}_{i,i+1}(z_i+z_{i+1})\mathcal{R}_{i,i-1}(z_{i-1}+z_i)\cdots\mathcal{R}_{i1}(z_1+z_i)\\
&\times\underline{\mathcal{K}}_i\bigl(\frac{1}{2}+z_i\bigr)\mathcal{R}_{1i}(1-z_1+z_i)\cdots \mathcal{R}_{i-2,i}(1-z_{i-2}+z_i)\mathcal{R}_{i-1,i}(1-z_{i-1}+z_i).
\end{split}
\end{equation*}
It is in this form that the boundary quantum KZ equations often appear in the literature, see, e.g., \cite{CQKZ,JKKMW,W,DF,DFZJ,dGP,Koj,RSV}.\\
{\bf (iii)} For special choices of the $K$-matrices $\underline{\mathcal{K}}(z)$ and $\mathcal{K}(z)$ a vertex operator approach leads to solutions
of the associated boundary quantum KZ equations \cite{JKKMW}. These solutions give rise to correlation functions of the semi-infinite XXZ spin-$\frac{1}{2}$ chain. 
 See \cite{DF,DFZJ,Ka,SV,BK} for other constructions of solutions for special classes of $K$-matrices $\underline{\mathcal{K}}(z)$ and $\mathcal{K}(z)$.
\end{rema}
\subsection{Basis consisting of power series solutions}\label{asymptXXZsec}
Note that $\textup{Sol}_{sp}$ is a vector space over the field $F$ of scalar valued $\mathbb{Z}^n$-translation invariant
meromorphic functions on $\mathbb{C}^n$. We now give, for generic
parameters $\xi,\kappa,\zeta,\zeta^\prime,\upsilon,\upsilon^\prime$, the construction of a 
$F$-basis of $\textup{Sol}_{sp}$.  It is a special case of the construction of a basis of solutions
of the quantum affine KZ equations associated to principal series modules in Subsection \ref{pss} using the asymptotic
techniques from \cite{vMS, vM, SAnn, S2}. 

Let
$w_0\in W_0$ for the longest Weyl group element ($w_0=-1$ in the natural action of $W_0$ on $\mathbb{C}^n$ by permutations
and sign changes) and define $\rho\in\mathbb{C}^n$ by
\[
\rho=(\zeta+\zeta^\prime+(n-1)\kappa,\zeta+\zeta^\prime+(n-3)\kappa,\ldots,\zeta+\zeta^\prime+(1-n)\kappa).
\]
Let $\widetilde{\rho}$ be the vector $\rho$ with $\upsilon$ and $\zeta^\prime$ interchanged. The role of the plane wave in the asymptotic expansion is played
by
\[
\mathcal{W}(\mathbf{z},\mathbf{w}):=q^{(\rho-\mathbf{w},\widetilde{\rho}+w_0\mathbf{z})}.
\]

Let $\zeta,\zeta^\prime,\upsilon,\upsilon^\prime\in\mathbb{C}$.
Define the associated Askey-Wilson parameters by
\begin{equation}\label{abcd}
\{a,b,c,d\}:=\{q^{\zeta+\upsilon}, -q^{\zeta-\upsilon}, q^{\frac{1}{2}+\zeta^\prime+\upsilon^\prime},
-q^{\frac{1}{2}+\zeta^\prime-\upsilon^\prime}\}
\end{equation}
and the dual Askey-Wilson parameters by
\[
\{\widetilde{a},\widetilde{b},\widetilde{c},\widetilde{d}\}:=
\{q^{\zeta+\zeta^\prime},-q^{\zeta-\zeta^\prime},q^{\frac{1}{2}+\upsilon+\upsilon^\prime},
-q^{\frac{1}{2}+\upsilon-\upsilon^\prime}\},
\]
cf. \cite{S2}. Note that the dual Askey-Wilson parameters are obtained from the Askey-Wilson parameters
by interchanging $\upsilon$ and $\zeta^\prime$. This defines an involution on the parameters, which we call the duality involution. 

Write
\[
\bigl(x_1,\ldots,x_m;q\bigr)_{\infty}:=\prod_{i=1}^r\bigl(x_i;q\bigr)_{\infty},\qquad
\bigl(x;q\bigr)_{\infty}:=\prod_{j=0}^{\infty}(1-q^jx)
\]
for products of $q$-shifted factorials and set
\begin{equation*}
\begin{split}
\mathcal{S}_{sp}(\mathbf{z}):=&\prod_{i=1}^n\bigl(q^{1-z_i}/a,q^{1-z_i}/b,q^{1-z_i}/c,q^{1-z_i}/d;q\bigr)_{\infty}\\
\times&\prod_{1\leq r<s\leq n}\bigl(q^{1-2\kappa-z_r+z_s}, q^{1-2\kappa-z_r-z_s}, -q^{1-z_r+z_s},-q^{1-z_r-z_s};q\bigr)_{\infty}
\end{split}
\end{equation*}
and
\begin{equation*}
\begin{split}
\mathcal{U}(\mathbf{z}):=&\frac{\mathcal{S}_{sp}(\mathbf{z})}{\prod_{i=1}^n\bigl(q^{1-2z_i};q\bigr)_{\infty}\prod_{1\leq r<s\leq n}
\bigl(q^{2-2z_r+2z_s},q^{2-2z_r-2z_s};q^2\bigr)_{\infty}}\\
=&\prod_{i=1}^n\frac{\bigl(q^{1-z_i}/a,q^{1-z_i}/b,q^{1-z_i}/c,q^{1-z_i}/d;q\bigr)_{\infty}}
{\bigl(q^{1-2z_i};q\bigr)_{\infty}}\prod_{1\leq r<s\leq n}\frac{\bigl(q^{1-2\kappa-z_r+z_{s}},q^{1-2\kappa-z_r-z_s};q\bigr)_{\infty}}
{\bigl(q^{1-z_r+z_s},q^{1-z_r-z_s};q\bigr)_{\infty}}.
\end{split}
\end{equation*}
Write $\widetilde{\mathcal{U}}(\mathbf{z})$ for $\mathcal{U}(\mathbf{z})$ with
$\upsilon$ and $\zeta^\prime$ interchanged (i.e. the Askey-Wilson parameters are replaced by the dual Askey-Wilson parameters).
Write $Q_+$ for the $\mathbb{Z}_{\geq 0}$-linear combinations of the simple roots $\alpha_j$ ($1\leq j\leq n$). 
\begin{thm}\label{asymptoticspin}
For generic parameter values we have
\[
\textup{Sol}_{sp}=\bigoplus_{\underline{\epsilon}\in\{\pm 1\}^n}F\Phi_{\underline{\epsilon}}
\]
with $\Phi_{\underline{\epsilon}}\in\textup{Sol}_{sp}$ characterised by the expansion formula
\begin{equation}\label{expansionbq}
\Phi_{\underline{\epsilon}}(\mathbf{z}):=\frac{\mathcal{W}(\mathbf{z},w_{\underline{\epsilon}}\gamma)}
{\mathcal{S}_{sp}(\mathbf{z})\widetilde{\mathcal{U}}(w_{\underline{\epsilon}}\gamma)}
\sum_{\alpha\in Q_+}\Gamma_{\alpha,\underline{\epsilon}}q^{-(\alpha,\mathbf{z})},\qquad
\Gamma_{\alpha,\underline{\epsilon}}\in\bigl(\mathbb{C}^2\bigr)^{\otimes n}
\end{equation}
with the power series converging normally for $\mathbf{z}$ in compacta of $\mathbb{C}^n$,
with $\gamma$ given by \eqref{xi}, and with leading coefficient given by
\[
\Gamma_{0,\underline{\epsilon}}:=\pi^{sp}(T_{w_0})b_{\underline{\epsilon}}.
\]
\end{thm}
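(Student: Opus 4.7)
The plan is to realize $\Phi_{\underline{\epsilon}}$ via a formal-power-series ansatz and then to run the asymptotic analysis for difference equations developed in \cite{vMS, vM, SAnn, S2}. Write
\[
\Phi_{\underline{\epsilon}}(\mathbf{z}) = \frac{\mathcal{W}(\mathbf{z}, w_{\underline{\epsilon}}\gamma)}{\mathcal{S}_{sp}(\mathbf{z}) \widetilde{\mathcal{U}}(w_{\underline{\epsilon}}\gamma)}\, G_{\underline{\epsilon}}(\mathbf{z}), \qquad G_{\underline{\epsilon}}(\mathbf{z}) = \sum_{\alpha \in Q_+} \Gamma_{\alpha,\underline{\epsilon}} \, q^{-(\alpha,\mathbf{z})}.
\]
Using Remark (ii) after Definition \ref{bqKZform}, it suffices to enforce the $n$ equations $C^{\tau(-e_i)}(\mathbf{z}) \Phi_{\underline{\epsilon}}(\mathbf{z}+e_i) = \Phi_{\underline{\epsilon}}(\mathbf{z})$. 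Substituting the ansatz and using the explicit factorization of $C^{\tau(-e_i)}$ through the $\mathcal{R}$- and $\mathcal{K}$-matrices, the scalar prefactor $\mathcal{S}_{sp}$ is precisely engineered to absorb the scalar denominators appearing in $\mathcal{R}$, $\underline{\mathcal{K}}$, $\mathcal{K}$, so that what remains is a clean operator-valued recursion for the coefficients $\Gamma_{\alpha,\underline{\epsilon}}$.

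The first step is to analyze the leading term. At $\alpha = 0$ the plane wave $\mathcal{W}(\mathbf{z}, w_{\underline{\epsilon}}\gamma)$ exhibits the eigenvalue-like behavior $q^{-(w_{\underline{\epsilon}}\gamma, e_i)}$ under translation $\mathbf{z} \mapsto \mathbf{z}+e_i$, which forces $\Gamma_{0,\underline{\epsilon}}$ to lie in a simultaneous eigenspace of the matrix parts of the operators $C^{\tau(-e_i)}$ at their dominant asymptotics. By the Bernstein–Zelevinsky presentation these dominant parts are conjugate (via $T_{w_0}$) to the commuting Cherednik operators $Y_i$, so the eigenspace condition becomes $\Gamma_{0,\underline{\epsilon}} \in \pi^{sp}(T_{w_0}) \bigl( (\mathbb{C}^2)^{\otimes n} \bigr)_{w_{\underline{\epsilon}} \gamma}$. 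By Lemma \ref{bepsilon} this space is one-dimensional and spanned by $\pi^{sp}(T_{w_0}) b_{\underline{\epsilon}}$, pinning down the prescribed normalization of $\Gamma_{0,\underline{\epsilon}}$. The factor $\widetilde{\mathcal{U}}(w_{\underline{\epsilon}}\gamma)$ in the denominator is a dual-Askey--Wilson normalization that makes this choice canonical and duality-symmetric.

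Next I would solve the recursion inductively on the height of $\alpha \in Q_+$. At height $|\alpha| > 0$ the coefficient $\Gamma_{\alpha,\underline{\epsilon}}$ is determined from the lower coefficients by inverting an endomorphism of $(\mathbb{C}^2)^{\otimes n}$ of the form $\textup{Id} - q^{(\alpha,\cdot)}A_\alpha$, where the $A_\alpha$'s come from the power-series expansions of the $\mathcal{R}$- and $\mathcal{K}$-matrix entries. The genericity hypotheses on $(\xi,\kappa,\zeta,\zeta^\prime,\upsilon,\upsilon^\prime)$ are exactly what guarantees simultaneous invertibility for all $\alpha \in Q_+$; these conditions are made precise by the principal-series non-resonance conditions of Subsection \ref{psr}. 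The main obstacle I anticipate is then the convergence of the resulting formal series on compacta of $\mathbb{C}^n$. I would dispatch this by the uniform-growth estimates of \cite{vMS, vM, S2}: the operator norms of the $A_\alpha$ grow at most geometrically while the spectral gap of $\textup{Id} - q^{(\alpha,\cdot)}A_\alpha$ stays bounded away from zero on compacta, yielding normal convergence.

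The final step is to promote $\{\Phi_{\underline{\epsilon}}\}$ to an $F$-basis. Linear independence over $F$ is immediate from the $2^n$ pairwise distinct leading plane waves $\mathcal{W}(\mathbf{z}, w_{\underline{\epsilon}} \gamma)$, since a nontrivial $F$-linear relation among the $\Phi_{\underline{\epsilon}}$ would produce a relation among translation-monomials $q^{-(w_{\underline{\epsilon}}\gamma,\,\cdot)}$ with coefficients in $F$, which is impossible for generic $\gamma$. Spanning then follows from the general fact, proved in Subsection \ref{pss}, that for a quantum affine KZ system attached to a principal series $H$-module with generic central character the $F$-rank of the meromorphic solution space equals the dimension of the module. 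Since $(\mathbb{C}^2)^{\otimes n}$ is the principal series of central character $q^{-\gamma}$ and $\dim (\mathbb{C}^2)^{\otimes n} = 2^n = |\{\pm 1\}^n|$, the count forces $\{\Phi_{\underline{\epsilon}}\}_{\underline{\epsilon} \in \{\pm 1\}^n}$ to be a basis.
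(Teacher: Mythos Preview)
Your approach is correct in outline but differs from the paper's in a structurally important way. You attack the spin representation directly: you set up a power-series ansatz in $(\mathbb{C}^2)^{\otimes n}$, identify the leading term via the asymptotics of the transport operators, solve the recursion, and invoke the growth estimates of \cite{vMS,vM,SAnn,S2} for convergence. The paper instead proves the hard analytic statement \emph{once}, for the minimal principal series $M(\gamma)$ (this is Theorem~\ref{Vref}, imported from \cite{S2}), obtaining a single self-dual solution $\Phi(\mathbf{z},\gamma)$. It then manufactures the solutions for the non-minimal principal series $M_I(\gamma)$ by pushing $\Phi(\mathbf{z},\sigma\gamma)$ through the normalised intertwiner $A_{\sigma^{-1}}(\gamma)$ and the surjection $\phi_{I,\gamma}\colon M(\gamma)\twoheadrightarrow M_I(\gamma)$; see Proposition~\ref{specI}. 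The spin case is then literally the specialization $I=\{1,\ldots,n-1\}$, with the identification $M_I(\gamma)\simeq(\mathbb{C}^2)^{\otimes n}$ of \eqref{isomorfismexplicit} matching $v^I_{w_{\underline{\epsilon}}}(\gamma)$ with $v_{\underline{\epsilon}}$ and $b^I_{w_{\underline{\epsilon}}^{-1}}(\gamma)$ with $b_{\underline{\epsilon}}$.

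What each approach buys: yours is self-contained at the level of the spin model and makes the mechanism (ansatz, leading eigenspace, recursion, convergence) transparent in the concrete setting, but you must re-verify that $\mathcal{S}_{sp}$ clears all denominators and that the recursion is solvable and convergent---work already absorbed into Theorem~\ref{Vref}. The paper's intertwiner-and-projection route avoids redoing any analysis: convergence, holomorphy of $\mathcal{S}_{sp}\Phi_{\underline{\epsilon}}$, and the form of the leading coefficient are inherited for free from the minimal case, and the appearance of $\pi^{sp}(T_{w_0})b_{\underline{\epsilon}}$ drops out of the identity $\Gamma_0(\sigma\gamma)\propto v_{w_0}$ together with $\phi_{I,\gamma}\bigl(A_{\sigma^{-1}}(\gamma)v_e(\sigma\gamma)\bigr)=b^I_{\sigma^{-1}}(\gamma)$. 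This route also explains conceptually why the normalisation factor $\widetilde{\mathcal{U}}(w_{\underline{\epsilon}}\gamma)^{-1}$ is natural (it is the specialisation of $\widetilde{\mathcal{S}}(\sigma\gamma)^{-1}$ together with the $q$-shifted factorials in $\Gamma_0$), and it is what makes the later connection-matrix computation (Theorem~\ref{Mwc}) a direct specialisation of the bispectral connection formulas of \cite{S2}.
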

The proof of the theorem is given in Subsection \ref{link}. It is obtained as a special case of Proposition \ref{specI}, which deals with power series
solutions of quantum affine KZ equations associated to principal series modules of affine Hecke algebras. Note that $\mathcal{S}_{sp}(\mathbf{z})\Phi_{\underline{\epsilon}}(\mathbf{z})$
is holomorphic in $\mathbf{z}\in\mathbb{C}^n$, i.e. the factor $\mathcal{S}_{sp}(\mathbf{z})$ in \eqref{expansionbq} is singling out the singularities of $\Phi_{\underline{\epsilon}}(\mathbf{z})$.

Observe that $\Phi_{\underline{\epsilon}}(\mathbf{z})$ is the solution of the boundary qKZ equations which behaves as the "plane wave"
$\bigl(\widetilde{\mathcal{U}}(w_{\underline{\epsilon}}\gamma)^{-1}\pi^{sp}(T_{w_0})b_{\underline{\epsilon}}\bigr)\mathcal{W}(\mathbf{z},w_{\underline{\epsilon}}\gamma)$ 
when $\Re\bigl((\alpha_i,\mathbf{z})\bigr)\rightarrow -\infty$ for $i=1,\ldots,n$. This power series basis thus picks the Weyl chamber $C_-:=\{\mathbf{x}\in\mathbb{R}^n \,\, | \,\, 
\bigl(\alpha_i,\mathbf{x}\bigr)<0\,\,\,\forall\, i\}$ as the asymptotic region in which the basis elements behave as plane waves. 

\begin{rema}
{\bf (i)} The generic conditions on the parameters
can be made precise, see Subsections \ref{pss} \& \ref{link}.\\
{\bf (ii)} The choice of normalisation factor $\widetilde{\mathcal{U}}(w_{\underline{\epsilon}}\gamma)^{-1}$
in \eqref{expansionbq} is motivated by duality properties of the asymptotic series solutions of quantum affine KZ equations
associated to minimal principal series representations (see \cite{vM, vMS, SAnn, S2} and Subsection \ref{pss}). 
\end{rema}
\subsection{The connection problem}\label{connXXZsec}
In this subsection we assume that the parameters are generic.
Let $v\in W_0$. Then $\{(\nabla(v)\Phi_{\underline{\epsilon}^\prime})(\mathbf{z})\}_{\underline{\epsilon}^\prime}$ is a $F$-linear basis of
$\textup{Sol}_{sp}$ with the basis elements behaving asymptotically as plane waves for $\Re\bigl((\alpha_i,v^{-1}\mathbf{z})\bigr)\rightarrow -\infty$ for $i=1,\ldots,n$. 
Let $M_{cm;\underline{\epsilon},\underline{\epsilon}^\prime}^v(\cdot,\xi)\in F$ be the unique elements satisfying
\[
\bigl(\nabla(v)\Phi_{\underline{\epsilon}^\prime}\bigr)(\mathbf{z})=
\sum_{\underline{\epsilon}\in\{\pm 1\}^n}M_{cm;\underline{\epsilon},\underline{\epsilon}^\prime}^v(\mathbf{z},\xi)\Phi_{\underline{\epsilon}}(\mathbf{z})
\]
(the subindex "cm" stands for "connection matrix"). 
We single out the dependence on the representation parameter $\xi$, because it will play the role of the dynamical parameter when relating the
$M_{cm;\underline{\epsilon},\underline{\epsilon}^\prime}^v(\mathbf{z};\xi)$ to solutions of dynamical quantum Yang-Baxter and reflection equations.

Consider the corresponding $2^n\times 2^n$-matrix 
\[
M_{cm}^v(\cdot,\xi)=\bigl(M^v_{cm;\underline{\epsilon},\underline{\epsilon}^\prime}(\cdot,\xi)\bigr)_{\underline{\epsilon},\underline{\epsilon}^\prime\in\{\pm 1\}^n}
\]
as an $\textup{End}_{\mathbb{C}}\bigl((\mathbb{C}^2)^{\otimes n}\bigr)$-valued meromorphic function on $\mathbb{C}^n$ by
\[
M_{cm}^v(\mathbf{z},\xi)v_{\underline{\epsilon}^\prime}:=\sum_{\underline{\epsilon}\in\{\pm 1\}^n}
M_{cm;\underline{\epsilon},\underline{\epsilon}^\prime}^v(\mathbf{z},\xi)
v_{\underline{\epsilon}}.
\]
\begin{cor}\label{cocycleM}
The set $\{M_{cm}^v(\mathbf{z},\xi)\}_{v\in W_0}$ of $\textup{End}_{\mathbb{C}}\bigl((\mathbb{C}^2)^{\otimes n}\bigr)$-valued
meromorphic functions in $\mathbf{z}\in\mathbb{C}^n$ is a $W_0$-cocycle:
\[
M_{cm}^e(\mathbf{z},\xi)=\textup{Id}_{(\mathbb{C}^2)^{\otimes n}},\qquad
M_{cm}^{uv}(\mathbf{z},\xi)=M_{cm}^u(\mathbf{z},\xi)M_{cm}^v(u^{-1}\mathbf{z},\xi)
\]
for all $u,v\in W_0$.
\end{cor}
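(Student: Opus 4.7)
The proof should proceed in three steps, none of which is really an obstacle: this is essentially a bookkeeping corollary of the theorem asserting that $\{\Phi_{\underline{\epsilon}}\}_{\underline{\epsilon}}$ is an $F$-basis of $\textup{Sol}_{sp}$, combined with the cocycle property of $\{C^u(\mathbf{z})\}_{u\in W}$ and the fact that $\nabla$ restricts to a genuine $W_0$-action on $\textup{Sol}_{sp}$ (Remark (i) after Definition \ref{bqKZform}).

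First I would verify that the definition of $M_{cm}^v(\mathbf{z},\xi)$ makes sense, namely that $\nabla(v)\Phi_{\underline{\epsilon}^\prime}$ lies in $\textup{Sol}_{sp}$ for every $v\in W_0$. This is precisely the content of Remark (i): since $W_0$ normalises the translation subgroup $\tau(\mathbb{Z}^n)\subset W$, one has $v\tau(\lambda)=\tau(v\lambda)v$ in $W$, so by the cocycle property of $\{C^u(\mathbf{z})\}_{u\in W}$ the operator $\nabla(v)$ commutes with the system of equations defining $\textup{Sol}_{sp}$. The $F$-basis property from Theorem \ref{asymptoticspin} then guarantees that the scalar coefficients $M_{cm;\underline{\epsilon},\underline{\epsilon}^\prime}^v(\cdot,\xi)\in F$ are uniquely determined.

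Next, the identity $M_{cm}^e(\mathbf{z},\xi)=\textup{Id}$ is immediate: $\nabla(e)\Phi_{\underline{\epsilon}^\prime}=\Phi_{\underline{\epsilon}^\prime}$, so uniqueness of the expansion forces $M_{cm;\underline{\epsilon},\underline{\epsilon}^\prime}^e=\delta_{\underline{\epsilon},\underline{\epsilon}^\prime}$. For the cocycle identity, fix $u,v\in W_0$ and expand $\nabla(uv)\Phi_{\underline{\epsilon}^\prime}=\nabla(u)\bigl(\nabla(v)\Phi_{\underline{\epsilon}^\prime}\bigr)$ in two ways. Applying the definition of $M_{cm}^{uv}$ on the left yields
\[
(\nabla(uv)\Phi_{\underline{\epsilon}^\prime})(\mathbf{z})=\sum_{\underline{\epsilon}}M^{uv}_{cm;\underline{\epsilon},\underline{\epsilon}^\prime}(\mathbf{z},\xi)\Phi_{\underline{\epsilon}}(\mathbf{z}).
\]
On the right, using the definition of $\nabla(u)$ together with the fact that the scalar coefficients $M_{cm;\underline{\epsilon},\underline{\epsilon}^\prime}^v(\cdot,\xi)$ can be pulled through $C^u(\mathbf{z})$, the argument of $M_{cm}^v$ gets shifted from $\mathbf{z}$ to $u^{-1}\mathbf{z}$, yielding
\[
\bigl(\nabla(u)\nabla(v)\Phi_{\underline{\epsilon}^\prime}\bigr)(\mathbf{z})=\sum_{\underline{\epsilon},\underline{\epsilon}^{\prime\prime}}M^u_{cm;\underline{\epsilon}^{\prime\prime},\underline{\epsilon}}(\mathbf{z},\xi)\,M^v_{cm;\underline{\epsilon},\underline{\epsilon}^\prime}(u^{-1}\mathbf{z},\xi)\,\Phi_{\underline{\epsilon}^{\prime\prime}}(\mathbf{z}).
\]
Finally I would compare coefficients in the $F$-basis $\{\Phi_{\underline{\epsilon}}\}$ and rewrite the resulting matrix-element identity as $M_{cm}^{uv}(\mathbf{z},\xi)=M_{cm}^u(\mathbf{z},\xi)M_{cm}^v(u^{-1}\mathbf{z},\xi)$, which is the cocycle equation in the standard form inherited from the pattern $\nabla(uv)=\nabla(u)\nabla(v)$ twisted by the shift $\mathbf{z}\mapsto u^{-1}\mathbf{z}$ built into $\nabla$. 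There is no genuine analytic obstacle; the only subtle point is keeping track of where the argument of each matrix is evaluated, which is forced by the formula $(\nabla(u)f)(\mathbf{z})=C^u(\mathbf{z})f(u^{-1}\mathbf{z})$.
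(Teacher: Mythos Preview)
Your proposal is correct and is precisely the standard argument the paper has in mind; the paper states the corollary without proof because it follows immediately from the fact that $\nabla$ is a $W$-action, that $\textup{Sol}_{sp}$ is $\nabla(W_0)$-stable, and that $\{\Phi_{\underline{\epsilon}}\}_{\underline{\epsilon}}$ is an $F$-basis (Theorem~\ref{asymptoticspin}). Your bookkeeping of the index shift $\mathbf{z}\mapsto u^{-1}\mathbf{z}$ in the second factor is exactly right and is the only point where care is needed.
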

We call $\{M_{cm}^v(\mathbf{z},\xi)\}_{v\in W_0}$ the {\it connection cocycle} of the boundary quantum KZ equations associated
to the spin representation $\pi^{sp}_{(\xi)}$. 

Let $h$ be the linear operator on $\mathbb{C}^2$ defined by $hv_\epsilon=\epsilon v_\epsilon$ ($\epsilon\in\{\pm 1\}$). Write $h_i$ for the linear operator on 
$\bigl(\mathbb{C}^2\bigr)^{\otimes n}$ given by
\[
h_i:=\textup{id}_{(\mathbb{C}^2)^{\otimes (i-1)}}\otimes h\otimes \textup{id}_{(\mathbb{C}^2)^{\otimes (n-i)}}.
\]
For a family $S(\xi)$ of linear operators on $\bigl(\mathbb{C}^2\bigr)^{\otimes n}$ depending meromorphically on $\xi$ we write 
\[
S(\xi+\alpha h_i)v_{\underline{\epsilon}}:=S(\xi+\alpha\epsilon_i)v_{\underline{\epsilon}}.
\]
for the associated linear operator on $\bigl(\mathbb{C}^2\bigr)^{\otimes n}$ in which the representation parameter $\xi$ is shifted according to the "weight" of the 
$i$th tensor component of $v_{\underline{\epsilon}}$.

Write
\[
\theta(x_1,\ldots,x_r;q):=\prod_{i=1}^r\theta(x_i;q),\qquad \theta(x;q):=\prod_{j=0}^{\infty}(1-q^jx)(1-q^{j+1}/x)
\]
for products of renormalised Jacobi theta functions. Set
\begin{equation}\label{calC}
\mathcal{C}(z,\xi):=
\frac{\theta\bigl(\widetilde{a}q^\xi,\widetilde{b}q^\xi,\widetilde{c}q^\xi,dq^{\xi-z}/\widetilde{a};q\bigr)}
{\theta\bigl(q^{2\xi},dq^{-z};q\bigr)}q^{-(\zeta+\upsilon-z)(\zeta+\zeta^\prime-\xi)}
\end{equation}
and write $\widetilde{\mathcal{C}}(z,\xi)$ for $\mathcal{C}(z,\xi)$ with $\upsilon$
and $\zeta^\prime$ interchanged,
\[
\widetilde{\mathcal{C}}(z,\xi):=
\frac{\theta\bigl(aq^\xi,bq^\xi,cq^\xi,\widetilde{d}q^{\xi-z}/a;q\bigr)}
{\theta\bigl(q^{2\xi},\widetilde{d}q^{-z};q\bigr)}q^{-(\zeta+\zeta^\prime-z)(\zeta+\upsilon-\xi)}.
\]
Note that $\mathcal{C}(z,\xi)$ is one-periodic in both $z$ and $\xi$. 
We show in Subsection \ref{link}
that the connection cocycle $\{M_{cm}^u(\mathbf{z},\xi)\}_{u\in W_0}$ is characterised by the following formulas.
\begin{thm}\label{Mwc}
We have
\begin{equation}\label{MrelBa}
\begin{split}
M^{s_i}_{cm}(\mathbf{z},\xi)&=P_{i,i+1}R_{cm}(z_i-z_{i+1},2\xi-2\kappa (h_1+h_2+\cdots+h_{i-1}))_{i,i+1},\quad 1\leq i<n,\\
M^{s_n}_{cm}(\mathbf{z},\xi)&=K_{cm}(z_n,\xi-\kappa(h_1+h_2+\cdots+h_{n-1}))_n
\end{split}
\end{equation}
where $P: \mathbb{C}^2\otimes\mathbb{C}^2\rightarrow \mathbb{C}^2\otimes\mathbb{C}^2$ is the permutation operator and
\begin{equation*}
R_{cm}(z,\xi):=\left(\begin{matrix} 1 & 0 & 0 & 0\\
0 & A_{cm}(z,\xi) & B_{cm}(z,\xi) & 0\\
0 & B_{cm}(z,-\xi) & A_{cm}(z,-\xi) & 0\\
0 & 0 & 0 & 1\end{matrix}\right),
\end{equation*}
\begin{equation*}
K_{cm}(z,\xi):=\left(\begin{matrix}\alpha_{cm}(z,\xi) & \beta_{cm}(z,\xi)\\
\beta_{cm}(z,-\xi) & \alpha_{cm}(z,-\xi)\end{matrix}\right)
\end{equation*}
with the matrix coefficients given explicitly by
\[
A_{cm}(z,\xi):=\frac{\theta(q^{2\kappa-\xi},q^{-z};q)}{\theta(q^{2\kappa-z},q^{-\xi};q)}q^{2\kappa(z-\xi)},\qquad 
B_{cm}(z,\xi):=\frac{\theta(q^{2\kappa},q^{-z-\xi};q)}{\theta(q^{-\xi},q^{2\kappa-z};q)}q^{(2\kappa+\xi)z},
\]
and 
\[
\alpha_{cm}(z,\xi):=\frac{\mathcal{C}(z,\xi)-\widetilde{\mathcal{C}}(\xi,z)}{\widetilde{\mathcal{C}}(\xi,-z)},
\qquad 
\beta_{cm}(z,\xi):=\frac{\mathcal{C}(z,\xi)}{\widetilde{\mathcal{C}}(-\xi,-z)}.
\]
\end{thm}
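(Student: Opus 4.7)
The plan is to derive Theorem \ref{Mwc} by specialising to the spin representation the general formula for the connection cocycle of the quantum affine KZ equations associated to principal series modules, which is established in Section 3 (see Subsection \ref{link} and the surrounding material). The argument has three main steps.

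First, Corollary \ref{cocycleM} reduces the task to computing $M^{s_j}_{cm}(\mathbf{z},\xi)$ for the simple reflections $s_1,\ldots,s_n$ generating $W_0$. Second, I would invoke the general connection formula from Section 3, using the fact stated in Subsection \ref{spinsec} that for generic parameters the spin module $\bigl(\pi^{sp}_{(\xi)}, (\mathbb{C}^2)^{\otimes n}\bigr)$ is isomorphic to the principal series module of $H$ with central character $q^{-\gamma}$, where $\gamma$ is given by \eqref{xi}. Under this identification, the basis $\{b_{\underline{\epsilon}}\}$ of Lemma \ref{bepsilon} becomes the normalised $Y$-eigenbasis indexed by the minimal coset representatives $\{w_{\underline{\epsilon}}\}$ of $W_0/S_n$.

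Third, I would translate the abstract formula from the $\{b_{\underline{\epsilon}}\}$-basis to the $\{v_{\underline{\epsilon}}\}$-basis using Lemmas \ref{wepsilonlemma} and \ref{bepsilon}, and read off the tensor-leg structure. The essential combinatorial observation concerns the $W_0$-action on the set $\{w_{\underline{\epsilon}}\}$: for $1\le i<n$, left multiplication by $s_i$ (modulo the stabiliser $S_n$ of $\underline{1}$) effectively swaps $\epsilon_i\leftrightarrow\epsilon_{i+1}$, whereas $s_n$ flips the sign of $\epsilon_n$. This combinatorics forces $M^{s_i}_{cm}$ for $i<n$ to act nontrivially only on tensor legs $i,i+1$ and to involve the permutation $P_{i,i+1}$, while $M^{s_n}_{cm}$ acts only on leg $n$. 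The dynamical shifts by $\kappa(h_1+\cdots+h_{i-1})$ arise because the $i$th component of $w_{\underline{\epsilon}}\gamma$ depends on $\epsilon_1,\ldots,\epsilon_{i-1}$ through the specific form of $w_{\underline{\epsilon}}$ in \eqref{wepsilon} and of $\gamma$ in \eqref{xi}; the parameter $\xi$ inside $\gamma$ then gets shifted by the cumulative weight of the earlier tensor legs.

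The hardest part will be matching the explicit theta-function matrix coefficients produced by the general formula with the specific expressions $A_{cm}, B_{cm}, \alpha_{cm}, \beta_{cm}$ in the statement. This amounts to computing ratios of plane waves $\mathcal{W}(\mathbf{z}, w_{\underline{\epsilon}}\gamma)$ and normalisation factors $\widetilde{\mathcal{U}}(w_{\underline{\epsilon}}\gamma)$ from \eqref{expansionbq}, and then simplifying via theta-function identities; the $\mathbb{Z}^n$-translation invariance of the connection coefficients ensures the outcome must be a ratio of theta functions, which one identifies term by term. The boundary case $j=n$ is particularly delicate, since the four Askey-Wilson parameters \eqref{abcd} enter asymmetrically into $\mathcal{C}$ and $\widetilde{\mathcal{C}}$, and verifying the precise expressions for $\alpha_{cm}$ and $\beta_{cm}$ requires careful bookkeeping of the duality involution swapping $\upsilon\leftrightarrow\zeta^\prime$ together with the scalar factors coming from $\widetilde{\mathcal{U}}$.
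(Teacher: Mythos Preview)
Your overall strategy is correct and matches the paper's approach: specialise the general connection formula (Theorem \ref{cqaKZ}) to the principal series module $M_I(\gamma)$ with $I=\{1,\ldots,n-1\}$, identify it with the spin representation via \eqref{isomorfismexplicit}, use the combinatorics of $s_i w_{\underline{\epsilon}}$ (the paper's Lemma \ref{leminbetween}) to see the tensor-leg structure, and compute $(\alpha_i,w_{\underline{\epsilon}}\gamma)$ explicitly to extract the dynamical shifts.

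Two small corrections to your plan. First, no basis change from $\{b_{\underline{\epsilon}}\}$ to $\{v_{\underline{\epsilon}}\}$ is needed: by definition $M^\sigma_{cm}(\mathbf{z},\xi)v_{\underline{\epsilon}'}=\sum_{\underline{\epsilon}} m^{I,\sigma}_{w_{\underline{\epsilon}},w_{\underline{\epsilon}'}}(\mathbf{z},\gamma)v_{\underline{\epsilon}}$, so the abstract connection coefficients of Theorem \ref{cqaKZ} already give the matrix entries in the $v_{\underline{\epsilon}}$-basis directly. Second, the ``hardest part'' is easier than you anticipate: Theorem \ref{cqaKZ} delivers the coefficients in closed form via the functions $\mathfrak{e}_{\alpha_i}$, $\widetilde{\mathfrak{e}}_{\alpha_i}$, so no computation of plane-wave ratios or $\widetilde{\mathcal{U}}$-factors is required. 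For $i<n$ the paper invokes \cite[(1.9) \& Prop.~1.7]{S2} to collapse the $\mathfrak{e}_{\alpha_i}$-expressions (long roots, two effective Askey--Wilson parameters) into the simple theta ratios $A_{cm}$, $B_{cm}$; for $i=n$ one checks directly that $\mathfrak{e}_{\alpha_n}(x,y)=\mathcal{C}(x,y)$, which immediately yields $\alpha_{cm}$, $\beta_{cm}$ without further manipulation.
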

Note that $R_{cm}(z,\xi)$ and $K_{cm}(z,\xi)$ are one-periodic in both $z$ and $\xi$. 

\subsection{Dynamical $R$- and $K$-matrices}\label{DynRK}
The explicit form \eqref{MrelBa} of the connection cocycle implies that $R_{cm}(z,\xi)$ is a unitary solution of the dynamical
quantum Yang-Baxter equation and $K_{cm}(z,\xi)$ a unitary solution of the dynamical reflection equation associated to $R_{cm}(z,\xi)$. Before stating the exact result 
we introduce the dynamical quantum Yang-Baxter and reflection equations in a slightly more general setting, replacing the spin space $\mathbb{C}^2$ by an arbitrary
finite dimensional complex vector space $V$. Before doing so we need to extend some of the notations introduced in the previous paragraph to this more general context.

Let $h: V\rightarrow V$ be a semisimple complex linear operator.
We write $V=\bigoplus_{\mu\in\mathbb{C}}V_\mu$ for the corresponding eigenspace decomposition, with $V_\mu$ the eigenspace of $h$ with eigenvalue $\mu$. 
Write $h_i: V^{\otimes n}\rightarrow V^{\otimes n}$ for the linear operator
\[
h_i:=\textup{Id}_{V}^{\otimes (i-1)}\otimes h\otimes \textup{Id}_{V}^{\otimes (n-i)}.
\]
We use the shorthand notation $\underline{\mu}:=(\mu_1,\ldots,\mu_n)$ for a $n$-tuple of complex numbers $\mu_i$. Set
\[
(V^{\otimes n})_{\underline{\mu}}:=V_{\mu_1}\otimes\cdots\otimes V_{\mu_n}=\{v\in V^{\otimes n} \,\, | \,\, h_iv=\mu_iv,\quad i=1,\ldots,n\},
\]
so that $V^{\otimes n}=\bigoplus_{\underline{\mu}}(V^{\otimes n})_{\underline{\mu}}$. 
Let $S(\xi)$ be a family of linear operator on $V^{\otimes n}$ depending meromorphically on a 
complex parameter $\xi\in\mathbb{C}$.  
For a given 
complex number $\alpha\in\mathbb{C}$
we write $\xi\mapsto S(\xi+\alpha h_i)$ for the family of linear operator on $V^{\otimes n}$ defined by
\[
S(\xi+\alpha h_i)v:=S(\xi+\alpha \mu_i)v,\qquad  v\in (V^{\otimes n})_{\underline{\mu}}.
\]
We also occasionally use the backward weight shift. To define it, let $P_{\underline{\mu}}: V^{\otimes n}\rightarrow (V^{\otimes n})_{\underline{\mu}}$
be the projection along the direct sum decomposition $V^{\otimes n}=\bigoplus_{\underline{\nu}}(V^{\otimes n})_{\underline{\nu}}$. Then we define
\[
S(\xi+\alpha\underline{h}_i):=\sum_{\underline{\nu}}P_{\underline{\nu}}S(\xi+\alpha \nu_i).
\]
It is a linear operator on $V^{\otimes n}$ depending meromorphically on $\xi$.

A family
$(z,\xi)\mapsto R(z,\xi)$ of linear operators on $V\otimes V$ depending meromorphically on two complex parameters $z$ and $\xi$ is called a dynamical $R$-matrix if $R(z,\xi)$ satisfies 
the dynamical quantum Yang-Baxter equation with spectral parameter \cite{Fe}:
\begin{equation}\label{qdYBintro}
\begin{split}
R_{12}(z_1-z_2, \xi-&2\kappa h_3)R_{13}(z_1-z_3,\xi)R_{23}(z_2-z_3,\xi-2\kappa h_1)=\\
=&R_{23}(z_2-z_3,\xi)R_{13}(z_1-z_3,\xi-2\kappa h_2)R_{12}(z_1-z_2,\xi)
\end{split}
\end{equation}
as a meromorphic family of linear operators on $V^{\otimes 3}$
(for later purposes it is convenient to add the factor two in the step size $2\kappa$). The complex parameter $z$ is called the spectral parameter and $\xi$ the
dynamical parameter.
The dynamical quantum Yang-Baxter equation \eqref{qdYBintro}, also known as the Gervais-Neveu-Felder equation,
first appeared in \cite{GN}. It is closely related to Baxter's star-triangle equation, see \cite[Thm. 3]{Fe} and Subsection \ref{8section}.

We say that a dynamical $R$-matrix $R(z,\xi)$ satisfies the ice rule if
\begin{equation}\label{ice}
\lbrack R(z,\xi), \Delta(h)\rbrack=0,
\end{equation}
where $\Delta(h):=h_1+h_2$. It is said to be unitarity if
\begin{equation}\label{unitary}
R_{21}(z,\xi)R(-z,\xi)=\textup{Id}_V^{\otimes 2}.
\end{equation}
We call $R(z,\xi)$ dynamically $P$-symmetric if
\begin{equation}\label{Psymmetry}
R_{21}(z,\xi)=R(z,-\xi+2\kappa\Delta(h))
\end{equation}
as linear operators on $V\otimes V$.

\begin{rema}
In \cite{Fe2} a family $\widetilde{R}(z,\xi)$ of linear operator on $V\otimes V$ is said to satisfy the dynamical quantum Yang-Baxter equation
if 
\begin{equation}\label{dynamicalYBFelder}
\begin{split}
\widetilde{R}_{12}(z_1-z_2,\xi&-\kappa h_3)\widetilde{R}_{13}(z_1-z_3;\xi+\kappa h_2)\widetilde{R}_{23}(z_2-z_3;\xi-\kappa h_1)=\\
&=\widetilde{R}_{23}(z_2-z_3;\xi+\kappa h_1)\widetilde{R}_{13}(z_1-z_3;\xi-\kappa h_2)\widetilde{R}_{12}(z_1-z_2;\xi+\kappa h_3).
\end{split}
\end{equation}
The two versions \eqref{qdYBintro} and \eqref{dynamicalYBFelder} of the dynamical quantum
Yang-Baxter equations are equivalent with the identification given by
\[
R(z,\xi)=\widetilde{R}(z,\xi-\kappa\Delta(h)),
\]
provided that $R(z,\xi)$ (hence also $\widetilde{R}(z,\xi)$) satisfies the ice-rule.
\end{rema}

Dynamical $R$-matrices encode the integrable structures underlying
$2$-dimensional face models from statistical mechanics with periodic boundary conditions (see, e.g., \cite{Ba,Fe2,Fe}). 
A well-known example 
is Baxter's $8$-vertex face dynamical R-matrix. We come back to this example in Subsection \ref{8section}.

Solutions of dynamical versions of reflection equations are related to integrable reflecting boundary conditions for face models, see, e.g., \cite{AR,BP,BPO,F}
and Subsection \ref{8section}.
In this paper we will use the following left and right version of 
the dynamical reflection equation.
\begin{defi}
Let $(z,\xi)\mapsto R(z,\xi)$ be a meromorphic family of linear operators on $V^{\otimes 2}$ of two complex parameters $z$ and $\xi$.
\begin{enumerate}
\item A meromorphic family $(z,\xi)\mapsto K(z,\xi)$ of linear operators on $V$ is called a 
right dynamical $K$-matrix with respect to $R(z,\xi)$ if $K(z,\xi)$ satisfies the right dynamical reflection equation 
\begin{equation}\label{reflectioneqn}
\begin{split}
R_{21}(z_1-z_2,&2\xi)K_1(z_1,\xi-\kappa h_2)R_{12}(z_1+z_2,2\xi)K_2(z_2,\xi-\kappa h_1)=\\
=&K_2(z_2,\xi-\kappa h_1)R_{21}(z_1+z_2,2\xi)K_1(z_1,\xi-\kappa h_2)R_{12}(z_1-z_2,2\xi)
\end{split}
\end{equation}
as a family of linear operators on $V\otimes V$.
The right dynamical $K$-matrix $K(z,\xi)$ is called unitary if 
\begin{equation}\label{unitaryK}
K(z,\xi)K(-z,\xi)=\textup{Id}_{V}.
\end{equation}
\item A meromorphic family $(z,\xi)\mapsto \underline{K}(z,\xi)$ of linear operators on $V$ is called a left dynamical $K$-matrix with respect to $R(z,\xi)$ if 
$\underline{K}(z,\xi)$ satisfies the left dynamical reflection equation
\begin{equation}\label{reflectioneqnleft}
\begin{split}
&R(z_1-z_2,2\xi+2\kappa\Delta(h))\underline{K}_1(z_1,\xi+\kappa h_2)R_{21}(z_1+z_2,2\xi+2\kappa\Delta(h))\underline{K}_2(z_2,\xi+\kappa h_1)\\
&=\underline{K}_2(z_2,\xi+\kappa h_1)R(z_1+z_2,2\xi+2\kappa\Delta(h))\underline{K}_1(z_1,\xi+\kappa h_2)R_{21}(z_1-z_2,2\xi+2\kappa\Delta(h))
\end{split}
\end{equation}
as a family of linear operators on $V\otimes V$.
The left dynamical $K$-matrix $\underline{K}(z,\xi)$ is called unitary if 
$\underline{K}(z,\xi)\underline{K}(-z,\xi)=\textup{Id}_{V}$.
\end{enumerate}
\end{defi}
\begin{rema}\label{lrchange}
If $R(z,\xi)$ is dynamically $P$-symmetric and if $K(z,\xi)$ is a solution of the right dynamical reflection equation \eqref{reflectioneqn} then
\[
\underline{K}(z,\xi):=K(z,-\xi)
\]
is a solution of the left dynamical reflection equation \eqref{reflectioneqnleft} (and vice versa).
\end{rema}

Let $R(z,\xi)$ be a linear operator on $V\otimes V$ and
$K(z,\xi)$ a
linear operator on $V$, both depending meromorphically on $(z,\xi)\in\mathbb{C}^2$. Let 
$P: V\otimes V\rightarrow V\otimes V$
be the permutation operator.
Consider the linear operators   
\begin{equation}\label{Mrelnew}
\begin{split}
M^{s_i}(\mathbf{z},\xi)&:=P_{i,i+1}R_{i,i+1}(z_i-z_{i+1},2\xi-2\kappa (h_1+h_2+\cdots+h_{i-1})),\qquad 1\leq i<n,\\
M^{s_n}(\mathbf{z},\xi)&:=K_n(z_n,\xi-\kappa(h_1+h_2+\cdots+h_{n-1})).
\end{split}
\end{equation}
on $V^{\otimes n}$, depending meromorphically on $(\mathbf{z},\xi)\in\mathbb{C}^n\times\mathbb{C}$.
\begin{prop}\label{propCocycle0}
Suppose that $R(z,\xi): V\otimes V\rightarrow V\otimes V$ satisfies the ice-rule \eqref{ice}.
The following two statements are equivalent.
\begin{enumerate}
\item The linear operators \eqref{Mrelnew} are part of a (necessarily unique) set $\{M^u(\mathbf{z},\xi)\}_{u\in W_0}$ of 
$\textup{End}_{\mathbb{C}}\bigl(V^{\otimes n}\bigr)$-valued meromorphic functions $M^u(\mathbf{z},\xi)$
in $(\mathbf{z},\xi)\in\mathbb{C}^n\times\mathbb{C}$
satisfying the cocycle properties
\[
M^e(\mathbf{z},\xi)=\textup{Id}_{V^{\otimes n}},\qquad M^{uv}(\mathbf{z},\xi)=
M^u(\mathbf{z},\xi)M^v(u^{-1}\mathbf{z},\xi)
\]
for all $u,v\in W_0$;
\item $R(z,\xi)$ is a unitary dynamical $R$-matrix and $K(z,\xi)$ is an associated right unitary dynamical $K$-matrix.
\end{enumerate}
\end{prop}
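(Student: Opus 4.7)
The plan is to reduce the existence of a cocycle $\{M^u(\mathbf{z},\xi)\}_{u\in W_0}$ extending the prescribed generators \eqref{Mrelnew} to a list of identities on the $M^{s_i}$, and then to translate these identities one by one into the equations of (2). The reduction is a standard Matsumoto-type argument. For any reduced expression $w=s_{j_1}\cdots s_{j_r}$ in $W_0$, the iterated cocycle product $M^{s_{j_1}}(\mathbf{z},\xi)M^{s_{j_2}}(s_{j_1}^{-1}\mathbf{z},\xi)\cdots M^{s_{j_r}}((s_{j_1}\cdots s_{j_{r-1}})^{-1}\mathbf{z},\xi)$ depends only on $w$, and the resulting assignment $w\mapsto M^w(\mathbf{z},\xi)$ extends uniquely to a cocycle on $W_0$, if and only if the generators $M^{s_i}$ satisfy the involution relations $M^{s_i}(\mathbf{z},\xi)M^{s_i}(s_i\mathbf{z},\xi)=\textup{Id}_{V^{\otimes n}}$ together with the type A, type C, and far-commutation braid relations of $W_0$ in their cocycle-twisted form.

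Next, one translates each such relation individually. The involutions $s_i^2=e$ for $1\le i<n$ reduce, after cancelling $P_{i,i+1}^2=\textup{Id}$ and conjugating by $P_{i,i+1}$, to $R_{21}(z,2\hat\xi)R(-z,2\hat\xi)=\textup{Id}_{V^{\otimes 2}}$ where $\hat\xi:=\xi-\kappa(h_1+\cdots+h_{i-1})$; this is the unitarity \eqref{unitary} of $R$. For $i=n$ the analogous calculation gives $K(z,\hat\xi)K(-z,\hat\xi)=\textup{Id}_V$, which is \eqref{unitaryK}. The far-commutations $s_is_j=s_js_i$ with $|i-j|>1$ hold because $M^{s_i}$ and $M^{s_j}$ act on disjoint tensor legs, the ice rule \eqref{ice} being what allows the dynamical shift $-2\kappa(h_1+\cdots+h_{j-1})$ inside $M^{s_j}$ (with $j>i+1$) to commute through the $R$-factor of $M^{s_i}$: by \eqref{ice}, $R_{i,i+1}$ commutes with $h_i+h_{i+1}$, and the remaining $h_k$'s with $k\notin\{i,i+1\}$ act on different legs. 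The type A braids $s_is_{i+1}s_i=s_{i+1}s_is_{i+1}$ for $1\le i<n-1$ reduce, after pushing the three permutation factors to one side via $P_{jk}X_{jk}=X_{kj}P_{jk}$ and $P_{i,i+1}P_{i+1,i+2}P_{i,i+1}=P_{i+1,i+2}P_{i,i+1}P_{i+1,i+2}$, to the dynamical Yang-Baxter equation \eqref{qdYBintro} on slots $(i,i+1,i+2)$; the shift $-2\kappa(h_1+\cdots+h_{i-1})$ is common to all six terms and cancels.

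The type C braid $s_{n-1}s_ns_{n-1}s_n=s_ns_{n-1}s_ns_{n-1}$ is the most delicate step. Writing out both sides of the cocycle identity applied to $\mathbf{z}$, one evaluates the intermediate $W_0$-translates at slots $(n-1,n)$ to track the spectral arguments $z_{n-1}\pm z_n$, $-z_{n-1}$, $-z_n$, and then uses $P_{n-1,n}R_{n-1,n}(z,\eta)P_{n-1,n}=R_{n,n-1}(z,\eta)$ to eliminate the two $P$-factors on each side. Relabeling slots $(n-1,n)$ as $(1,2)$ and absorbing the outer shift $\xi-\kappa(h_1+\cdots+h_{n-2})$ into a redefined dynamical parameter, the identity becomes exactly the right dynamical reflection equation \eqref{reflectioneqn}, with the $-\kappa h_{n-1}$ shifts inside the two $K$-factors matching the prescribed $-\kappa h_1$ and $-\kappa h_2$ shifts in \eqref{reflectioneqn}, and with the dynamical parameter $2\xi$ appearing in the $R$-arguments of \eqref{reflectioneqn} tracing back to the $2\xi$ prescribed in \eqref{Mrelnew}. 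The main obstacle is the bookkeeping in this last step: one must verify that the spectral arguments emerge as $z_1\pm z_2$ on the correct factors, that all $h$-shifts line up after moving them through $R$ and $K$ via repeated use of the ice rule, and that no sign is lost in tracking the $s_n$-induced sign change on $z_n$. Once these four classes of Coxeter relations are verified, the equivalence (1)$\Leftrightarrow$(2) follows.
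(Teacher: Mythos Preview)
Your proposal is correct and is exactly the ``direct computation'' that the paper invokes without detail: reduce the cocycle property to the Coxeter relations of $W_0$ via Matsumoto's lemma, then identify the involutions with unitarity of $R$ and $K$, the type~A braids with the dynamical Yang--Baxter equation \eqref{qdYBintro}, the type~C braid with the reflection equation \eqref{reflectioneqn}, and check that the far-commutations hold by the ice rule. One minor clarification: in the type~A braid it is only the outer shift $-2\kappa(h_1+\cdots+h_{i-1})$ that is common and can be absorbed into $\hat\xi$; the residual shift $-2\kappa h_i$ present in $M^{s_{i+1}}$ but not in $M^{s_i}$ is precisely what produces the dynamical $h$-shifts in \eqref{qdYBintro} after the permutations are pushed through.
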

\begin{proof}
This follows from a direct computation.
\end{proof}
Consider now the special case that $V=\mathbb{C}^2$ with basis $\{v_+,v_-\}$ and with $h$ the linear operator on $\mathbb{C}^2$ satisfying
$hv_\epsilon=\epsilon v_\epsilon$.
Since $R_{cm}(z,\xi)$ satisfies the ice-rule \eqref{ice} we obtain from Corollary \ref{cocycleM}, Theorem \ref{Mwc} and Proposition \ref{propCocycle0} the following
result.
\begin{cor}\label{AlmostThm}
$R_{cm}(z,\xi)$ is a 
unitary dynamical $R$-matrix and $K_{cm}(z,\xi)$ is a right unitary dynamical $K$-matrix associated to $R_{cm}(z,\xi)$.
\end{cor}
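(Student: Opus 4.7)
The plan is to reduce the corollary directly to Proposition \ref{propCocycle0}, for which the only ingredients needed are (a) the ice-rule for $R_{cm}(z,\xi)$, and (b) the cocycle structure from Corollary \ref{cocycleM} with the explicit form from Theorem \ref{Mwc}.

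First I would verify the ice-rule \eqref{ice} for $R_{cm}(z,\xi)$ by inspection of its explicit matrix form. The only off-diagonal nonzero entries are $B_{cm}(z,\xi)$ and $B_{cm}(z,-\xi)$ in the $(2,3)$ and $(3,2)$ slots, which mix the two weight-zero vectors $v_+\otimes v_-$ and $v_-\otimes v_+$; the diagonal entries $1$, $A_{cm}(z,\pm\xi)$, $1$ act on weight eigenvectors. Consequently $R_{cm}(z,\xi)$ preserves each eigenspace of $\Delta(h)=h_1+h_2$, i.e. $\lbrack R_{cm}(z,\xi),\Delta(h)\rbrack=0$.

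Next, Corollary \ref{cocycleM} tells us that $\{M^u_{cm}(\mathbf{z},\xi)\}_{u\in W_0}$ is a $W_0$-cocycle on $\bigl(\mathbb{C}^2\bigr)^{\otimes n}$, and Theorem \ref{Mwc} identifies its values on the simple reflections $s_i$ and $s_n$ with exactly the operators in \eqref{Mrelnew} associated to the pair $(R_{cm},K_{cm})$. Since we may take $n\geq 3$ without loss (the dynamical quantum Yang-Baxter equation and the dynamical reflection equation live on at most three tensor factors, and the statement of the corollary does not depend on $n$), this is precisely hypothesis (1) of Proposition \ref{propCocycle0}. Applying the implication (1)$\Rightarrow$(2) of that proposition then yields that $R_{cm}(z,\xi)$ is a unitary dynamical $R$-matrix and $K_{cm}(z,\xi)$ is an associated right unitary dynamical $K$-matrix.

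I do not expect a genuine obstacle here, since all the heavy lifting has already been carried out earlier: Theorem \ref{Mwc} absorbs the analytic/theta-function computations that extract the matrix coefficients of the connection cocycle, and Proposition \ref{propCocycle0} performs the direct computation translating the braid relations $s_is_{i+1}s_i=s_{i+1}s_is_{i+1}$ and $s_{n-1}s_ns_{n-1}s_n=s_ns_{n-1}s_ns_{n-1}$ (together with $s_j^2=e$) into the dynamical Yang-Baxter equation \eqref{qdYBintro}, the dynamical reflection equation \eqref{reflectioneqn}, and the unitarity relations \eqref{unitary}, \eqref{unitaryK}. The present corollary is simply the confluence of these three results, the only genuinely new verification being the trivial ice-rule check above.
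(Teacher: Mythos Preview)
Your proof is correct and follows the same route as the paper: verify the ice-rule for $R_{cm}$ by inspection, then feed Corollary \ref{cocycleM} and Theorem \ref{Mwc} into the implication (1)$\Rightarrow$(2) of Proposition \ref{propCocycle0}. Your explicit remark that one should take $n\geq 3$ (so that the type $A$ braid relation $s_is_{i+1}s_i=s_{i+1}s_is_{i+1}$ is actually present and yields the dynamical Yang-Baxter equation) is a worthwhile clarification that the paper leaves implicit.
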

In the following subsection we will constructed gauges that transform $R_{cm}(z,\xi)$ into Baxter's \cite{Ba} dynamical $R$-matrix associated to the eight vertex face model and $K_{cm}(z,\xi)$
into associated right dynamical $K$-matrices.

\subsection{Gauges of dynamical $R$- and $K$-matrices}\label{gaugesec}
We give two types of gauge transformations for a special class of dynamical $R$- and $K$-matrices. We only consider the
case that $V=\mathbb{C}^2$ and $h$ is the linear operator on $\mathbb{C}^2$ defined by $hv_\epsilon=\epsilon v_\epsilon$.

\begin{prop}\label{propgauge}
Suppose that 
\begin{equation*}
R(z,\xi):=\left(\begin{matrix} 1 & 0 & 0 & 0\\
0 & A(z,\xi) & B(z,\xi) & 0\\
0 & B(z,-\xi) & A(z,-\xi) & 0\\
0 & 0 & 0 & 1\end{matrix}\right)
\end{equation*}
is a unitary dynamical $R$-matrix and that
\begin{equation*}
K(z,\xi):=\left(\begin{matrix}\alpha(z,\xi) & \beta(z,\xi)\\
\beta(z,-\xi) & \alpha(z,-\xi)\end{matrix}\right)
\end{equation*}
is a right unitary dynamical $K$-matrix associated to $R(z,\xi)$. Then
\begin{equation*}
R^g(z,\xi):=\left(\begin{matrix} 1 & 0 & 0 & 0\\
0 & A^g(z,\xi) & B^g(z,\xi) & 0\\
0 & B^g(z,-\xi) & A^g(z,-\xi) & 0\\
0 & 0 & 0 & 1\end{matrix}\right)
\end{equation*}
is a unitary dynamical $R$-matrix and 
\begin{equation*}
K^g(z,\xi):=\left(\begin{matrix}\alpha^g(z,\xi) & \beta^g(z,\xi)\\
\beta^g(z,-\xi) & \alpha^g(z,-\xi)\end{matrix}\right)
\end{equation*}
a right unitary dynamical $K$-matrix associated to $R^g(z,\xi)$ if
\begin{enumerate}
\item[{\bf (i)}] $A^g(z,\xi)=u(\xi)A(z,\xi)$, $B^g(z,\xi)=B(z,\xi)$ and $K^g(z,\xi)=K(z,\xi)$ with $u(\xi)$ a meromorphic function
satisfying $u(\xi)u(-\xi)=1$,
\end{enumerate}
and if
\begin{enumerate}
\item[{\bf (ii)}] $A^g(z,\xi)=q^{2\kappa(\xi-z)}A(z,\xi)$, $B^g(z,\xi)=q^{-z(2\kappa+\xi)}B(z,\xi)$,
$\alpha^g(z,\xi)=q^{2z\xi}\alpha(z,\xi)$ and $\beta^g(z,\xi)=\beta(z,\xi)$.
\end{enumerate}
\end{prop}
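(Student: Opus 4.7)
My plan is a direct verification. In each case I must check (a) that $R^g(z,\xi)$ is a unitary dynamical $R$-matrix satisfying \eqref{qdYBintro} and \eqref{unitary}, and (b) that $K^g(z,\xi)$ is a right unitary dynamical $K$-matrix for $R^g$ satisfying \eqref{reflectioneqn} and \eqref{unitaryK}. Since $R$ and $R^g$ share the same block-diagonal matrix shape, with the off-diagonal pieces confined to the $h$-weight $0$ subspace of $V^{\otimes 2}$, both $R$ and $R^g$ satisfy the ice rule \eqref{ice}. This decouples \eqref{qdYBintro} and \eqref{reflectioneqn} into a finite collection of scalar identities on the weight sectors of $V^{\otimes 3}$ and $V^{\otimes 2}$ respectively. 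The plan is to read off these scalar identities from the hypothesis that $(R,K)$ satisfy the equations in question and to check that each remains true after the gauge is applied.

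Case (i) is particularly transparent because only the $A$-entries are rescaled, by the dynamical scalar $u(\xi)$ with $u(\xi)u(-\xi)=1$, while $B$ and $K$ are left unchanged. On each sector of each of \eqref{unitary}, \eqref{qdYBintro} and \eqref{reflectioneqn}, the inserted $u$-factors group into pairs of the form $u(\xi^{*})u(-\xi^{*})$ at some evaluation $\xi^{*}$, which collapse to $1$; the identity then reduces to the one already known for $R$ and $K$. Because the ice-rule form of the equations imposes shifts $\xi\mapsto\xi-2\kappa h_j$ for the QYBE and $\xi\mapsto\xi\pm\kappa h_j$ for the reflection equation, one must verify sector by sector that this pairing truly occurs; since there are only $2^{3}=8$ sectors for the QYBE and $2^{2}=4$ for the reflection equation, this is a short check.

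Case (ii) requires more bookkeeping because the gauge factors depend on both $z$ and $\xi$. Unitarity of $R^g$ and $K^g$ is immediate from the $z\mapsto -z$ cancellations $q^{2\kappa(\xi-z)+2\kappa(-\xi+z)}=1$, $q^{-z(2\kappa+\xi)+z(2\kappa+\xi)}=1$ and $q^{2z\xi+2(-z)\xi}=1$. For the dynamical QYBE the $z$-exponents telescope via $(z_1-z_2)+(z_2-z_3)=z_1-z_3$, while the $\xi$-exponents combine consistently with the $-2\kappa h_j$-shifts imposed by the ice rule; the analogous telescoping for the reflection equation uses $z_1\pm z_2$ and the shifts $\pm\kappa h_j$. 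I expect the most delicate step to be the reflection equation in case (ii), where the four $R^g$-factors and two $K^g$-factors each contribute $z$- and $\xi$-exponents whose total on the two sides of \eqref{reflectioneqn} must match. Working on each of the four weight sectors of $V^{\otimes 2}$ separately, and tracking the exponents introduced by $A^g$, $B^g$, $\alpha^g$ and $\beta^g$ together with the dynamical shifts, reduces the verification to a finite and mechanical, though lengthy, computation.
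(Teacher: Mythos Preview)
Your proposal is correct and follows essentially the same approach as the paper, which simply states that the result ``follows by a tedious but direct computation.'' Your outline of the sector-by-sector verification, with the $u(\xi)u(-\xi)=1$ pairing in case (i) and the exponent telescoping in case (ii), is precisely what such a direct computation amounts to.
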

\begin{proof}
This follows by a tedious but direct computation.
\end{proof}
\begin{rema}
The type {\bf (i)} gauge transformations for dynamical $R$-matrices were considered before in \cite{EV}.
\end{rema}

Baxter's dynamical $R$-matrix $R_{Ba}(z,\xi)$ corresponding to the eight vertex face model \cite{Ba,ABF} 
is defined as 
\begin{equation}\label{BaSol}
R_{Ba}(z,\xi):=\left(\begin{matrix}1 & 0 & 0 & 0\\ 0 & A_{Ba}(z,\xi) & B_{Ba}(z,\xi) & 0\\
0 & B_{Ba}(z,-\xi) & A_{Ba}(z,-\xi) & 0\\
0 & 0 & 0 & 1\end{matrix}
\right)
\end{equation}
with the matrix coefficients given by
\[
A_{Ba}(z,\xi):=\frac{\theta(q^{-z},q^{2\kappa+\xi};q)}{\theta(q^{2\kappa-z},q^\xi;q)},\qquad B_{Ba}(z,\xi):=
\frac{\theta(q^{-z-\xi},q^{2\kappa};q)}{\theta(q^{2\kappa-z},q^{-\xi};q)}.
\]
Note that $R_{Ba}(z,w)$ is unitary, satisfies the ice-rule and is dynamically $P$-symmetric.
Write 
\begin{equation}\label{KBa}
K_{Ba}(z,\xi):=\left(\begin{matrix}\alpha_{Ba}(z,\xi) & \beta_{Ba}(z,\xi)\\ \beta_{Ba}(z,-\xi) & \alpha_{Ba}(z,-\xi)\end{matrix}\right)
\end{equation}
with
\begin{equation}\label{alphabeta}
\begin{split}
\alpha_{Ba}(z,\xi)&:=\alpha_{cm}(z,\xi)q^{2z\xi}=\left(\frac{\mathcal{C}(z,\xi)-\widetilde{\mathcal{C}}(\xi,z)}{\widetilde{\mathcal{C}}(\xi,-z)}\right)q^{2z\xi},\\
\beta_{Ba}(z,\xi)&:=\beta_{cm}(z,\xi)=\frac{\mathcal{C}(z,\xi)}{\widetilde{\mathcal{C}}(-\xi,-z)}
\end{split}
\end{equation}
(recall the definition of $\mathcal{C}(z,\xi)$ from \eqref{calC}).
If we need to specify the dependence of $K_{Ba}(z,\xi)$ on the parameters $\zeta,\zeta^\prime,\upsilon,\upsilon^\prime$ then we write
$K_{Ba}(z,\xi;\zeta,\zeta^\prime,\upsilon,\upsilon^\prime)$.

\begin{cor}\label{KmatrixTHM}
$R_{Ba}(z,\xi)$ is a unitary dynamical $R$-matrix and $K_{Ba}(z,\xi)$ is a unitary right dynamical $K$-matrix associated to $R_{Ba}(z,\xi)$.
\end{cor}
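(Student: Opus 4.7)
The plan is to derive Corollary \ref{KmatrixTHM} from Corollary \ref{AlmostThm} by applying the two gauge transformations of Proposition \ref{propgauge} in succession to the pair $(R_{cm}(z,\xi), K_{cm}(z,\xi))$, which is already known to be a unitary dynamical $R$-matrix together with an associated right unitary dynamical $K$-matrix. The whole point of the definitions \eqref{BaSol} and \eqref{alphabeta} is that Baxter's pair $(R_{Ba}, K_{Ba})$ differs from $(R_{cm}, K_{cm})$ only by gauges of types (i) and (ii), so the proof reduces to identifying the correct gauge factors and checking a handful of elementary theta-function identities.

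First I would apply Proposition \ref{propgauge}(ii) to $(R_{cm}, K_{cm})$. This produces a new unitary dynamical $R$-matrix with entries
\[
A'(z,\xi)=q^{2\kappa(\xi-z)}A_{cm}(z,\xi)=\frac{\theta(q^{2\kappa-\xi},q^{-z};q)}{\theta(q^{2\kappa-z},q^{-\xi};q)},\qquad B'(z,\xi)=q^{-z(2\kappa+\xi)}B_{cm}(z,\xi)=\frac{\theta(q^{2\kappa},q^{-z-\xi};q)}{\theta(q^{-\xi},q^{2\kappa-z};q)},
\]
and an associated right unitary dynamical $K$-matrix with entries $\alpha'(z,\xi)=q^{2z\xi}\alpha_{cm}(z,\xi)$ and $\beta'(z,\xi)=\beta_{cm}(z,\xi)$. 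Comparing with \eqref{BaSol} and \eqref{alphabeta} one sees at once that $B'=B_{Ba}$ and $(\alpha',\beta')=(\alpha_{Ba},\beta_{Ba})$, so the intermediate pair already has the correct $B$-coefficient and the correct $K$-matrix; only the $A$-coefficient still needs adjustment.

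Second, I would apply Proposition \ref{propgauge}(i) with the meromorphic function
\[
u(\xi):=\frac{\theta(q^{2\kappa+\xi},q^{-\xi};q)}{\theta(q^{2\kappa-\xi},q^{\xi};q)},
\]
which is independent of $z$ and manifestly satisfies $u(\xi)u(-\xi)=1$ because interchanging $\xi\leftrightarrow -\xi$ inverts the ratio on the nose. A direct check shows $u(\xi)A'(z,\xi)=A_{Ba}(z,\xi)$, while $B'$ and the $K$-matrix are preserved by a type (i) gauge. By Proposition \ref{propgauge} the resulting pair $(R_{Ba},K_{Ba})$ therefore inherits the dynamical $R$-matrix and associated right dynamical $K$-matrix properties, together with their unitarities, from $(R',K')$.

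There is essentially no genuine obstacle; the main task is bookkeeping. One must recognize that the scalar prefactors $q^{2\kappa(z-\xi)}$, $q^{(2\kappa+\xi)z}$ and $q^{2z\xi}$ built into the definitions of $A_{cm}, B_{cm}$ and $\alpha_{cm}$ are exactly the factors absorbed by the type (ii) gauge, and that the residual discrepancy in the $A$-coefficient is purely a function of $\xi$ and hence removable by a type (i) gauge. The verifications $B'=B_{Ba}$, $u(\xi)A'=A_{Ba}$ and $u(\xi)u(-\xi)=1$ are then immediate from the definitions.
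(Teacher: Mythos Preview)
Your proof is correct and follows the same approach as the paper: both apply the two gauges of Proposition~\ref{propgauge} to the pair $(R_{cm},K_{cm})$ from Corollary~\ref{AlmostThm}, with the same choice of $u(\xi)$. The only difference is that the paper applies gauge~(i) first and then gauge~(ii), whereas you apply them in the opposite order; since the two gauges commute (each multiplies the entries by independent factors), this is immaterial.
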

\begin{proof}
Apply to the unitary dynamical $R$-matrix $R_{cm}(z,\xi)$ and the associated unitary right dynamical $K$-matrix $K_{cm}(z,\xi)$
gauge {\bf (i)}
 with
 \[
u(\xi)=\frac{\theta(q^{2\kappa+\xi},q^{-\xi};q)}{\theta(q^{2\kappa-\xi},q^\xi;q\bigr)}
\]
and apply subsequently gauge {\bf (ii)}. Then we obtain $R_{Ba}(z,\xi)$ and $K_{Ba}(z,\xi)$. Hence $R_{Ba}(z,\xi)$ is a unitary dynamical $R$-matrix
and $K_{Ba}(z,\xi)$ an associated unitary right dynamical $K$-matrix as a consequence 
of Corollary \ref{AlmostThm}.
\end{proof}
The fact that $R_{Ba}(z,\xi)$ is a unitary solution of the dynamical quantum Yang-Baxter equation goes back to Baxter \cite{Ba} (the dynamical quantum Yang-Baxter equation in \cite{Ba} takes the form of star-triangle equations, see \cite{Fe2} and the next subsection for more details on this viewpoint). 
The interpretation of Baxter's solution $R_{Ba}(z,\xi)$ in terms of connection matrices for the Frenkel-Reshetikhin-Smirnov qKZ equations goes back to \cite{FR}
(see also \cite{Ko,M}). Other examples of unitary dynamical $R$-matrices have been
constructed in, e.g., \cite{DJMO,JKMO,JMO,Ko}. Some of these examples are constructed using fusion techniques, others by computing connection matrices. 
 A quantum group context for dynamical $R$-matrices
was developed by Felder, Etingof and Varchenko, see, e.g.,
 \cite{Fe,EV,EV2}.
 
 Baxter's unitary dynamical $R$-matrix $R_{Ba}(z,\xi)$ satisfies dynamical crossing symmetry. In coordinates it reads
\[
\delta_2\epsilon_2R_{\epsilon_1,-\delta_2}^{\delta_1,-\epsilon_2}(z,\xi)=q^{-\kappa(1+\epsilon_2)}
\frac{\theta(q^{\xi+2\kappa\epsilon_2},q^z;q)}{\theta(q^\xi,q^{z-2\kappa};q)}R_{\delta_1\delta_2}^{\epsilon_1\epsilon_2}(2\kappa-z,\xi+2\kappa\epsilon_2),
\]
where we have written $R_{Ba}(z,\xi)v_{\delta_1}\otimes v_{\delta_2}=\sum_{\epsilon_1,\epsilon_2}R_{\delta_1\delta_2}^{\epsilon_1\epsilon_2}(z,\xi)v_{\epsilon_1}\otimes v_{\epsilon_2}$.
In coordinate free dynamical notations it reads
 \begin{equation}\label{crossingsymmetry}
 \sigma_2^yR_{Ba}(z,\xi)^{T_1}\sigma_2^y=q^{-\kappa(1+h_2)}\frac{\theta(q^{\xi+2\kappa h_2},q^z;q)}{\theta(q^\xi,q^{z-2\kappa};q)}
 R_{Ba}(2\kappa-z,\xi+2\kappa\underline{h}_2)
 \end{equation}
where $\sigma^y=\left(\begin{matrix} 0 & -\sqrt{-1}\\ \sqrt{-1} & 0\end{matrix}\right)$ and $T_{1}$ is transposition in the first tensor component. 

In Corollary  \ref{KmatrixTHM} we have constructed a $4$-parameter family of unitary right dynamical $K$-matrices associated to $R_{Ba}(z,\xi)$ by relating them to connection matrices of boundary qKZ equations. Dynamical $K$-matrices associated to $R_{Ba}(z,\xi)$ have been constructed before by direct computations in the study of the eight vertex solid-on-solid model with integrable reflecting boundaries  (see \cite{dVG, IK, HSFY} from the vertex perspective, \cite{FHS,BP,KH} from the face perspective, and \cite{HK} for the explicit link by vertex-face correspondences). In particular, see \cite[\S 4.3]{BP} for alternative parametrisations of unitary right dynamical $K$-matrices associated
to $R_{Ba}(z,\xi)$ depending on four parameters.
We will explain the link to the eight vertex face model with integrable reflecting boundaries in more detail in the following subsection.

\begin{rema}\label{decoupleremark}
{\bf (i)} Let $S$ be the spin-reversal operator on $\mathbb{C}^2$, defined as the linear map $S: \mathbb{C}^2\rightarrow\mathbb{C}^2$ satisfying
$S(v_+)=v_-$ and $S(v_-)=v_+$. Then $K_{Ba}(z,w)$ has the spin-reversal symmetry
\begin{equation}\label{spinreversalsymmetry}
SK_{Ba}(z,\xi)S^{-1}=K_{Ba}(z,-\xi).
\end{equation}
{\bf (ii)}
The matrix coefficient $\beta_{Ba}(z,\xi)$ of $K_{Ba}(z,\xi)$ decouples,
\[
\beta_{Ba}(z,\xi)=\frac{\widetilde{\mu}(\xi)}{\mu(-z)}
\]
with
\[
\mu(z):=\frac{\theta(aq^z,bq^z,cq^z,dq^z;q)}{\theta(q^{2z};q)}q^{2(\zeta+\zeta^\prime)z}
\]
and with $\widetilde{\mu}(z)$ obtained from $\mu(z)$ by interchanging $\nu$ and $\zeta^\prime$.
The function $\mu(z)$ is a natural elliptic analog of the $c$-function associated to the Askey-Wilson polynomials \cite{AW}. It naturally appears 
in the study of the Askey-Wilson function transform (see \cite{KS} and \cite[Lem. 4.4]{SAnn}), as well as in
Rains'  \cite{Ra} difference operator acting on $\textup{BC}$-symmetric theta functions, which plays an important role
in the study of Rains' interpolation theta functions.\\
{\bf (iii)} In \cite{F} a reflection equation different from \eqref{reflectioneqn} is considered in relation to Baxter's dynamical $R$-matrix $R_{Ba}(z,\xi)$.
It does not involve shifts in the dynamical parameter.
The one-parameter family of diagonal solutions from \cite{F} appears to be unrelated to the four-parameter family of dynamical $K$-matrices
$K_{Ba}(z,\xi)$.\\
{\bf (iv)} The dynamical $K$-matrix $K_{Ba}(z,\xi)$ is anti-diagonal iff $\mathcal{C}(z,\xi)=\widetilde{\mathcal{C}}(\xi,z)$. An explicit discrete set
of parameter values for which this is the case has been derived in \cite[\S 3]{StSIGMA}, see also \cite{S2}. In \cite{StSIGMA, S2} the 
condition $\mathcal{C}(z,\xi)=\widetilde{\mathcal{C}}(\xi,z)$ is related to
the theory of reflectionless Askey-Wilson second-order $q$-difference operators and to the theory of Baker-Akhiezer functions \cite{CE}.
\end{rema}

\subsection{Face reformulation}\label{8section}
Felder \cite{Fe} observed that solutions of the dynamical quantum Yang-Baxter equation satisfying the ice-rule are in one-to-one correspondence
to solutions of the star-triangle equation \cite{Ba}, see also \cite[\S 3]{Ro}. We recall it here, and extend the correspondence to associated dynamical $K$-matrices.
We will match it with the notations from \cite{BP} in order to facilitate the comparison of our $4$-parameter family of dynamical $K$-matrices from Corollary \ref{KmatrixTHM}
with the $4$-parameter family of solutions of the boundary Yang-Baxter equation from \cite[\S 4.3]{BP}.

In this subsection we fix a finite dimensional complex vector space $V$ and a semisimple linear operator $h$ with simple
spectrum $I$ (in many examples $V$ is the vector space underlying a finite dimensional irreducible 
$\mathfrak{sl}_2(\mathbb{C})$-representation and $h$ is the Cartan generator of $\mathfrak{sl}_2(\mathbb{C})$, in which case
$h$ acts semisimply on $V$ with simple spectrum given by
$\{k-2l\}_{l=0}^k$ for some $k\in\mathbb{N}$). We fix a linear basis $\{v_\mu\}_{\mu\in I}$ of $V$ satisfying $hv_\mu=\mu v_\mu$. 

Suppose $R(z,\xi)$ is a linear operator on $V\otimes V$ depending meromorphically on $(z,\xi)\in\mathbb{C}^2$ and write
\[
R(z,\xi)v_{\mu_1}\otimes v_{\mu_2}=\sum_{\nu_1,\nu_2\in I}R^{\nu_1\nu_2}_{\mu_1\mu_2}(z,\xi)v_{\nu_1}\otimes v_{\nu_2}
\]
for $\mu_1,\mu_2\in I$. For $\mu_i,\nu_i\in\mathbb{C}$ we set $R_{\mu_1\mu_2}^{\nu_1\nu_2}(z,\xi)\equiv 0$ if $(\mu_1,\mu_2,\nu_1,\nu_2)\not\in I^{\times 4}$.
Note that the ice rule \eqref{ice} for $R(z,\xi)$ is equivalent to the property that
$R_{\mu_1\mu_2}^{\nu_1\nu_2}(z,\xi)\equiv 0$ unless $\mu_1+\mu_2=\nu_1+\nu_2$. Set for $a,b\in\mathbb{C}$,
\begin{equation*}
A_{ab}:=\begin{cases}
1\qquad &\hbox{ if } b-a\in I,\\
0\qquad &\hbox{ if } b-a\not\in I.
\end{cases}
\end{equation*}

Define for $a,b,c,d\in\mathbb{C}$ the Boltzmann weight
\begin{equation}\label{WeightST}
W\left(\begin{matrix} a & b\\ c & d\end{matrix}\,\,\vline\,\, z,\xi\right):= R^{d-c,c-a}_{b-a,d-b}(z,\xi-2\kappa a).
\end{equation}
Note that $W\left(\begin{matrix} a & b\\ c & d\end{matrix}\,\,\vline\,\, z,\xi\right)\equiv 0$ unless $A_{ab}A_{ac}A_{bd}A_{cd}=1$.
By \cite[\S 5]{Fe2} (see also \cite[\S 3]{Ro}), we have the following result.
\begin{prop}
Suppose that $R(z,\xi): V\otimes V\rightarrow V\otimes V$ satisfies the ice rule \eqref{ice}. Then $R(z,\xi)$ is a solution of the dynamical quantum
Yang-Baxter equation \eqref{qdYBintro} if and only if the corresponding Boltzmann weights \eqref{WeightST} satisfy the star triangle equations
\begin{equation}\label{startriangle}
\begin{split}
\sum_gW&\left(\begin{matrix} f & g\\ a & b\end{matrix}\,\, \vline \,\, z_1-z_2,\xi\right)
W\left(\begin{matrix} g & d\\ b & c\end{matrix}\,\, \vline\,\, z_1-z_3,\xi\right)
W\left(\begin{matrix} f & e\\ g & d\end{matrix}\,\, \vline\,\, z_2-z_3,\xi\right)\\
&\,\,=\sum_gW\left(\begin{matrix} a & g\\ b & c\end{matrix}\,\, \vline\,\, z_2-z_3,\xi\right)
W\left(\begin{matrix} f & e\\ a & g\end{matrix}\,\, \vline\,\, z_1-z_3,\xi\right)
W\left(\begin{matrix} e & d\\ g & c\end{matrix}\,\, \vline\,\, z_1-z_2,\xi\right)
\end{split}
\end{equation}
for all $a,b,c,d,e,f\in\mathbb{C}$.
\end{prop}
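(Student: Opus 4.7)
The strategy is a direct index-by-index comparison of the two equations, using the ice rule to reparametrize matrix indices by face heights. Concretely, the ice rule forces $R^{\nu_1\nu_2}_{\mu_1\mu_2}(z,\xi)=0$ unless $\mu_1+\mu_2=\nu_1+\nu_2$. Consequently, given a nonzero entry I may introduce four heights $a,b,c,d\in\mathbb{C}$ by $b-a=\mu_1$, $d-b=\mu_2$, $c-a=\nu_1$, $d-c=\nu_2$: these four equations are consistent precisely because $\mu_1+\mu_2=\nu_1+\nu_2=d-a$. This reparametrization is exactly the one encoded in \eqref{WeightST}, with the additional feature that the dynamical argument is shifted by $-2\kappa a$ according to the NW corner.

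\textbf{Step 1 (reduction to matrix entries).} I would apply both sides of \eqref{qdYBintro} to $v_{\mu_1}\otimes v_{\mu_2}\otimes v_{\mu_3}$ and extract the coefficient of $v_{\nu_1}\otimes v_{\nu_2}\otimes v_{\nu_3}$. Each side is a product of three $R$-matrices, so naively each gives a double sum over intermediate indices, but the ice rule collapses this to a single sum (the intermediate weight on one tensor slot is determined by the others as soon as the outgoing slot is fixed). I would label the remaining free intermediate index on each side.

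\textbf{Step 2 (height reparametrization and shift tracking).} Next I would fix a reference height $f$ and replace the six outer indices $\mu_j,\nu_j$ by the six outer heights $a,b,c,d,e,f$ of the hexagon in \eqref{startriangle} via cumulative sums of weights, and similarly replace the single remaining internal summation variable by the central height $g$. For each of the six $R$-factors, I must check two things: (a) the four matrix indices of that factor correspond, under the reparametrization, to the four corners of one face in \eqref{startriangle}, and (b) the dynamical argument $\xi-2\kappa h_j$ appearing in \eqref{qdYBintro} evaluates, on the relevant intermediate tensor, to $\xi-2\kappa(\text{NW corner of that face})$, matching the convention in \eqref{WeightST}. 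This is the main bookkeeping; the identity $\mu_1+\mu_2+\mu_3=\nu_1+\nu_2+\nu_3$ coming from the global ice rule makes the two labelings of $f$ in the NW and NE positions consistent on both sides of \eqref{startriangle}.

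\textbf{Step 3 (matching and converse).} After Step 2, each side of the dynamical QYBE becomes the corresponding side of the star-triangle equation \eqref{startriangle} summed over $g$. Since this holds for every choice of outer weights $\mu_j,\nu_j$ and every reference height $f\in\mathbb{C}$, the dynamical QYBE is equivalent to \eqref{startriangle} holding for all $a,b,c,d,e,f\in\mathbb{C}$, where the Boltzmann weight is set to zero outside the admissible configurations by the convention $R^{\nu_1\nu_2}_{\mu_1\mu_2}\equiv 0$ for $(\mu_1,\mu_2,\nu_1,\nu_2)\notin I^{\times 4}$. The converse direction is just this argument read backwards: given \eqref{startriangle}, one recovers the QYBE entry by entry.

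\textbf{Main obstacle.} The only non-routine point is Step 2: correctly bookkeeping the three distinct dynamical shifts $\xi$, $\xi-2\kappa h_1$, $\xi-2\kappa h_3$ on the LHS (and the symmetric shifts on the RHS) and verifying that after reparametrization they all collapse to the single uniform convention $\xi\mapsto \xi-2\kappa(\text{NW corner})$ of \eqref{WeightST}. This requires choosing the height-to-index dictionary so that the accumulated weights on the intermediate tensor slots match the prescribed shifts; once this choice is made the remaining verification is algebraic and routine.
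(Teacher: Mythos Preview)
Your proposal is correct and is the standard direct argument: after unpacking \eqref{qdYBintro} in matrix entries and using the ice rule to introduce face heights, the three dynamical shifts $\xi$, $\xi-2\kappa h_1$, $\xi-2\kappa h_2$, $\xi-2\kappa h_3$ each become $\xi-2\kappa(\text{NW corner})$, which is precisely the convention \eqref{WeightST}, and the single surviving intermediate sum becomes the sum over $g$ in \eqref{startriangle}.

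The paper itself does not give a proof: it simply records the statement and attributes it to Felder \cite[\S 5]{Fe2} (see also Rosengren \cite[\S 3]{Ro}). Your index-by-index bookkeeping is exactly the computation carried out in those references, so there is no genuine difference in method; you have reconstructed the cited argument rather than found an alternative one.
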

\begin{rema}\label{Aremark}
Note that \eqref{startriangle} is a nontrivial identity only if $A_{ab}A_{bc}A_{dc}A_{ed}A_{fe}A_{fa}=1$. Furthermore, for fixed
$a,b,c,d,e,d$ the left and right hand sides of \eqref{startriangle} are well defined since the summand on the left (resp. right) is zero unless $g\in\mathbb{C}$
is in the finite (possibly empty) set of complex numbers $g$ satisfying $A_{fg}A_{gb}A_{gd}=1$ (resp. $A_{ag}A_{eg}A_{gc}=1$). With these two remarks it is clear
that  \eqref{startriangle} coincides with the star triangle equations for face models, see, e.g., \cite[(2.3)]{BP}.
\end{rema}
Unitarity \eqref{unitary} of $R(z,\xi)$ is equivalent to the inversion relation
\[
\sum_e W\left(\begin{matrix} d & e\\ a & b\end{matrix}\,\, \vline\,\, z,\xi\right)
W\left(\begin{matrix} d & c\\ e & b\end{matrix}\,\, \vline\,\, -z,\xi\right)=\delta_{ac}
\]
with $\delta_{ac}$ the Kronecker delta function,
cf. \cite[(2.13)]{BP}.

Boundary star-triangle equations are defined in \cite{BPO,BP,Ku}, leading to integrable face models with boundary. We now proceed to explain its equivalence
to the right dynamical reflection equation \eqref{reflectioneqn}. Let $K(z,\xi): V\rightarrow V$ be a linear operator depending meromorphically on
$(z,\xi)\in\mathbb{C}^2$ and write
\[
K(z,\xi)v_{\mu}=\sum_{\nu\in I}K_{\mu}^{\nu}(z,\xi)v_{\nu}
\]
for $\mu\in I$. For $\mu,\nu\in\mathbb{C}$ we set $K_{\mu}^{\nu}(z,\xi)\equiv 0$ if $(\mu,\nu)\not\in I\times I$.
Define for $a,b,c\in \mathbb{C}$ the boundary Boltzmann weight
\begin{equation}\label{BWeightST}
B\Bigl(b\,\,\, \begin{matrix} c\\a\end{matrix} \,\,\vline\,\, z,\xi\Bigr):=K_{c-b}^{a-b}\Bigl(z,\frac{\xi}{2}-\kappa b\Bigr).
\end{equation}
Note that $B\Bigl(b\,\,\, \begin{matrix} c\\a\end{matrix} \,\,\vline\,\, z,\xi\Bigr)\equiv 0$ unless $A_{ba}A_{bc}=1$.
The following result can be checked by a direct computation.
\begin{prop}\label{corrtoface}
Assume that $R(z,\xi): V\otimes V\rightarrow V\otimes V$ is a dynamical $R$-matrix satisfying the ice rule \eqref{ice}. Define the associated Boltzmann weights
by \eqref{WeightST}.
Then $K(z,\xi)$ is a right dynamical $K$-matrix with respect to $R(z,\xi)$ if and only if the corresponding boundary Boltzmann weights \eqref{BWeightST} satisfy the 
boundary Yang-Baxter equations
\begin{equation}\label{bYBeqn}
\begin{split}
&\sum_{f,g}W\left(\begin{matrix} c & f\\ d & e\end{matrix}\,\,\vline\,\, z_1-z_2,\xi\right)
B\left(f\,\,\, \begin{matrix} g\\e\end{matrix} \,\,\vline\,\, z_1,\xi\right)
W\left(\begin{matrix} c & b\\ f & g\end{matrix}\,\,\vline\,\, z_1+z_2,\xi\right)
B\left(b\,\,\, \begin{matrix} a\\g\end{matrix} \,\,\vline\,\, z_2,\xi\right)=\\
&=\sum_{f,g}B\left(d\,\,\, \begin{matrix} g\\e\end{matrix} \,\,\vline\,\, z_2,\xi\right)
W\left(\begin{matrix} c & f\\ d & g\end{matrix}\,\,\vline\,\, z_1+z_2,\xi\right)
B\left(f\,\,\, \begin{matrix} a\\g\end{matrix} \,\,\vline\,\, z_1,\xi\right)
W\left(\begin{matrix} c & b\\ f & a\end{matrix}\,\,\vline\,\, z_1-z_2,\xi\right)
\end{split}
\end{equation}
for all $a,b,c,d,e\in\mathbb{C}$.
\end{prop}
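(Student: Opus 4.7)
The plan is to expand the right dynamical reflection equation \eqref{reflectioneqn} in the tensor-product basis $\{v_\mu\otimes v_\nu\}_{\mu,\nu\in I}$ of $V\otimes V$ and show that, under the height parametrization \eqref{WeightST}--\eqref{BWeightST}, the resulting scalar identities are precisely the boundary Yang-Baxter equations \eqref{bYBeqn}. First I would fix $\mu_1,\mu_2,\nu_1,\nu_2\in I$ and compute the matrix coefficient of $v_{\nu_1}\otimes v_{\nu_2}$ in both sides of \eqref{reflectioneqn} applied to $v_{\mu_1}\otimes v_{\mu_2}$. Each of the four operators on either side contributes a scalar $R^{\cdot\cdot}_{\cdot\cdot}$ or $K^\cdot_\cdot$ together with a summation over the intermediate vector it produces. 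Because $R(z,\xi)$ satisfies the ice rule \eqref{ice}, total weight is preserved across every $R$-factor; this collapses the two intermediate summations to a single summation over one remaining intermediate spin. Simultaneously, the weight shifts $\xi-\kappa h_1$ and $\xi-\kappa h_2$ inside the $K$-factors translate into numerical shifts of the dynamical parameter by the weight of the spectator tensor leg.

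Next I would pass to the face picture by introducing a reference height $c\in\mathbb{C}$ and parametrizing the remaining indices in terms of heights $a,b,d,e$ (plus the intermediate-summation variable, which becomes $f$ on one side and $g$ on the other) obtained from $c$ by the running partial sums of the spins encountered along the sequence of tensor legs. Substituting the definitions \eqref{WeightST} and \eqref{BWeightST} turns each $R$-factor into a bulk Boltzmann weight $W$ and each $K$-factor into a boundary Boltzmann weight $B$, both evaluated at the correct dynamical parameter: the factor $2$ in the dynamical shift of \eqref{reflectioneqn}, combined with the shifts $\xi-2\kappa a$ in \eqref{WeightST} and $\xi/2-\kappa b$ in \eqref{BWeightST}, conspires to produce the common dynamical parameter $\xi$ of \eqref{bYBeqn}. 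Identical bookkeeping on the other side of \eqref{reflectioneqn} produces the right-hand side of \eqref{bYBeqn}. Conversely, since every tuple $(a,b,c,d,e)$ with $A_{ab}A_{bc}A_{dc}A_{ed}=1$ arises this way and the intermediate heights $f,g$ range exactly over the values forced on the collapsed summation by the ice rule, the scalar identities obtained from \eqref{reflectioneqn} and \eqref{bYBeqn} are in bijective correspondence.

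The main obstacle is purely bookkeeping: correctly tracking the weight shifts through the four operators on each side, matching $\xi\mapsto\xi-2\kappa a$ in \eqref{WeightST} with the $2\xi$ in \eqref{reflectioneqn}, and choosing the reference height consistently so that the boundary shift in $B$ aligns with the shift induced by the adjacent spectator leg of the $K$-operator acting on the "boundary" tensor slot. Once this dictionary is pinned down, no computation beyond index matching and the use of the ice rule is required; the verification is entirely formal and reduces, for each fixed configuration of external heights, to a finite identity of linear combinations of products of matrix coefficients of $R$ and $K$ that is just the reflection equation \eqref{reflectioneqn} rewritten coordinate-by-coordinate.
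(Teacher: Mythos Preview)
Your approach is exactly what the paper means by ``can be checked by a direct computation'': expand \eqref{reflectioneqn} in the tensor basis $\{v_{\mu_1}\otimes v_{\mu_2}\}$, use the ice rule to eliminate redundant intermediate indices, and translate to heights via \eqref{WeightST}--\eqref{BWeightST}. There is, however, one bookkeeping slip worth flagging. The ice rule does not reduce the intermediate summations to a \emph{single} free index: on each side of \eqref{reflectioneqn} the two $K$-factors each introduce one free intermediate weight and the two $R$-factors each introduce a pair of output weights subject to one ice constraint, so after fixing input $(\mu_1,\mu_2)$ and output $(\nu_1,\nu_2)$ you are left with \emph{two} free intermediate indices on each side. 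These are precisely the two summation variables $f,g$ in \eqref{bYBeqn}, which appear on \emph{both} sides of that equation (not ``$f$ on one side and $g$ on the other''). With this correction your outline goes through unchanged; the dictionary between the shifts $2\xi$, $\xi-\kappa h_j$ in \eqref{reflectioneqn} and the shifts $\xi-2\kappa a$, $\xi/2-\kappa b$ in \eqref{WeightST}--\eqref{BWeightST} works exactly as you describe.
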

By a similar reasoning as in Remark \ref{Aremark} one shows that the equations \eqref{bYBeqn} are equivalent to the boundary Yang-Baxter equations \cite[(2.4)]{BP}.
Note that unitarity \eqref{unitaryK} of $K(z,\xi)$ is equivalent to the boundary inversion relation
\[
\sum_dB\left(b\,\,\, \begin{matrix} d\\c\end{matrix} \,\,\vline\,\, z,\xi\right)B\left(b\,\,\, \begin{matrix} a\\d\end{matrix} \,\,\vline\,\, -z,\xi\right)=\delta_{ac},
\]
cf. \cite[(2.14)]{BP}.

Solutions of the star triangle \eqref{startriangle} and boundary Yang-Baxter equations \eqref{bYBeqn} give rise to 
two-dimensional integrable face models with boundary. We refer to \cite{BP,BPO} and references therein for details.
I will finish this subsection by giving the explicit formulas relating $R_{Ba}(z,\xi)$ to the Boltzmann weights of the eight vertex face model
and subsequently pushing the associated $4$-parameter solutions of the right dynamical reflection equation (Corollary \ref{KmatrixTHM}) to a $4$-parameter
family of solutions of the associated boundary Yang-Baxter equations. So in the remainder of this section we take $V=\mathbb{C}^2=\mathbb{C}v_+\oplus \mathbb{C}v_-$
and $hv_{\pm}=\pm v_{\pm}$, so that $I=\{-1,1\}$ and $A_{ab}=1$ iff $|a-b|=1$.

We gauge $R_{Ba}(z,\xi)$ to the unitary dynamical $R$-matrix $R_{8vSOS}(z,\xi)$ given by
\begin{equation*}
R_{8vSOS}(z,\xi):=q^{\frac{z}{2}}\frac{\theta(q^{2\kappa-z};q)}{\theta(q^{2\kappa};q)}\left(
\begin{matrix} 1 & 0 & 0 & 0\\
0 & u(\xi)A_{Ba}(z,\xi) & B_{Ba}(z,\xi) & 0\\
0 & B_{Ba}(z,-\xi) & u(-\xi)A_{Ba}(z,-\xi) & 0\\
0 & 0 & 0 & 1\end{matrix}
\right),
\end{equation*}
with 
\[
u(\xi):=q^{-\kappa}\Bigl(\frac{\theta(q^{-\xi+2\kappa};q)}{\theta(q^{-\xi-2\kappa};q)}\Bigr)^{\frac{1}{2}}.
\]
Denote the corresponding Boltzmann weights (see \eqref{WeightST}) by $W_{8vSOS}\left(\begin{matrix} a & b\\ c & d \end{matrix}\,\,\vline\,\, z,\xi\right)$. Then the nonzero
Boltzmann weights are 
\begin{equation*}
\begin{split}
W_{8vSOS}\left(\begin{matrix} a & a\pm 1\\ a\pm 1 & a\end{matrix} \,\,\vline\,\, z,\xi\right)&=q^{\mp \frac{z}{2}}\frac{\theta(q^{\pm z-\xi+2\kappa a};q)}
{\theta(q^{-\xi+2\kappa a};q)},\\
W_{8vSOS}\left(\begin{matrix} a\pm 1 & a\\ a & a\mp 1\end{matrix}\,\,\vline\,\, z,\xi\right)&=q^{\frac{z}{2}}\frac{\theta(q^{2\kappa-z};q)}{\theta(q^{2\kappa};q)},\\
W_{8vSOS}\left(\begin{matrix} a & a\pm 1\\ a\mp 1 & a\end{matrix}\,\, \vline\,\, z,\xi\right)&=
q^\kappa\Bigl(\frac{\theta(q^{-\xi+2\kappa(a-1)},q^{-\xi+2\kappa(a+1)};q)}{\theta(q^{-\xi+2\kappa a};q)^2}\Bigr)^{\frac{1}{2}}
q^{\frac{z}{2}}\frac{\theta(q^{-z};q)}{\theta(q^{2\kappa};q)},
\end{split}
\end{equation*}
which coincide with the standard weights of the eight-vertex face model. To relate it to the notations in \cite{BP}, note that
it corresponds to the Boltzmann weights \cite[(4.4)]{BP} by identifying the parameters $\lambda,w_0,u$ in \cite[(4.4)]{BP} with our parameters $2\kappa,-\xi,z$
(note though that there is a small typo in \cite[(4.4)]{BP}). The dynamical crossing symmetry \eqref{crossingsymmetry} of $R_{Ba}(z,\xi)$ corresponds to the
crossing symmetry \cite[\S 4.1 \& (2.11)]{BP} of the Boltzmann weights $W_{8vSOS}$.

We write $B_{8vSOS}\Bigl(b\,\,\, \begin{matrix} c\\a\end{matrix} \,\,\vline\,\, z,\xi\Bigr)$ for the boundary Boltzmann weights associated to the dynamical $K$-matrix
$K_{Ba}(z,\xi)$ (see \eqref{BWeightST}). The nonzero values are
\begin{equation}\label{B8vSOS}
\begin{split}
B_{8vSOS}\Bigl(a\,\,\, \begin{matrix} a\pm 1\\a\pm 1\end{matrix} \,\,\vline\,\, z,\xi\Bigr)&=
K_{Ba,\pm}^{\pm}\bigl(z,\frac{\xi}{2}-\kappa a\bigr)=\alpha_{Ba}\bigl(z,\pm\frac{\xi}{2}\mp\kappa a\bigr),\\
B_{8vSOS}\Bigl(a\,\,\, \begin{matrix} a\mp 1\\a\pm 1\end{matrix} \,\,\vline\,\, z,\xi\Bigr)&=K_{Ba,\mp}^{\pm}\bigl(z,\frac{\xi}{2}-\kappa a)=
\beta_{Ba}\bigl(z,\pm\frac{\xi}{2}\mp\kappa a\bigr),
\end{split}
\end{equation}
where $\alpha_{Ba}$ and $\beta_{Ba}$ are explicitly given by \eqref{alphabeta}.
\begin{cor}
$B_{8vSOS}$ is a $4$-parameter family of solutions of the boundary Yang-Baxter equations \eqref{bYBeqn} with respect to the Boltzmann weights
$W_{8vSOS}$.
\end{cor}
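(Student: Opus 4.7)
The plan is to reduce the claim to Proposition \ref{corrtoface}. That proposition will translate the corollary into the statement that $K_{Ba}(z,\xi)$ is a right dynamical $K$-matrix with respect to $R_{8vSOS}(z,\xi)$, together with the ice-rule for $R_{8vSOS}$, because by construction the Boltzmann weights $W_{8vSOS}$ and boundary weights $B_{8vSOS}$ are obtained from $R_{8vSOS}$ and $K_{Ba}$ via the recipes \eqref{WeightST} and \eqref{BWeightST}. The four parameters $\zeta,\zeta^\prime,\upsilon,\upsilon^\prime$ (equivalently the Askey-Wilson parameters \eqref{abcd}) are then automatically carried along from $K_{Ba}(z,\xi;\zeta,\zeta^\prime,\upsilon,\upsilon^\prime)$.

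To transfer the right dynamical $K$-matrix property of $K_{Ba}$ from $R_{Ba}$ (established in Corollary \ref{KmatrixTHM}) over to $R_{8vSOS}$, I would identify $R_{8vSOS}(z,\xi)$ as the result of two successive transformations applied to $R_{Ba}(z,\xi)$: first the type (i) gauge of Proposition \ref{propgauge} with multiplier
\[
u(\xi)=q^{-\kappa}\Bigl(\frac{\theta(q^{-\xi+2\kappa};q)}{\theta(q^{-\xi-2\kappa};q)}\Bigr)^{\frac{1}{2}},
\]
and then multiplication by the scalar factor $q^{z/2}\theta(q^{2\kappa-z};q)/\theta(q^{2\kappa};q)$ depending only on the spectral parameter. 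Proposition \ref{propgauge}(i) guarantees that after this gauge, $K_{Ba}$ persists as a right dynamical $K$-matrix for the gauged $R$-matrix. An overall scalar multiplier $f(z)$ on the $R$-matrix then trivially preserves both \eqref{qdYBintro} and \eqref{reflectioneqn}, as both sides of each equation acquire the same product of $f$-values (the factor $f(z_1-z_2)f(z_1-z_3)f(z_2-z_3)$ in the dynamical Yang-Baxter equation, and $f(z_1-z_2)f(z_1+z_2)$ in the right dynamical reflection equation). The ice-rule for $R_{8vSOS}$ is manifest from its block pattern.

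The one substantive check, and really the main obstacle, is the consistency condition $u(\xi)u(-\xi)=1$ needed to legitimately invoke Proposition \ref{propgauge}(i). I would dispatch it using the theta-function symmetry $\theta(x;q)=\theta(q/x;q)$ together with the quasi-periodicity $\theta(qx;q)=-x^{-1}\theta(x;q)$, which combine to give $\theta(q^{-\xi\pm 2\kappa};q)=-q^{\mp 2\kappa-\xi}\theta(q^{\xi\mp 2\kappa};q)$; substituting into $u(\xi)u(-\xi)$ produces the factor $q^{4\kappa}$ under the square root, exactly cancelling the $q^{-2\kappa}\cdot q^{-2\kappa}$ prefactor. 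With this verified, the chain Corollary \ref{KmatrixTHM} $\Rightarrow$ Proposition \ref{propgauge}(i) $\Rightarrow$ invariance under spectral-parameter scalar rescaling $\Rightarrow$ Proposition \ref{corrtoface} completes the argument.
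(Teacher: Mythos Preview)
Your argument is correct and follows essentially the same route as the paper's proof, which simply invokes Proposition \ref{propgauge} and then Proposition \ref{corrtoface}. You have usefully made explicit what the paper leaves implicit: that the passage from $R_{Ba}$ to $R_{8vSOS}$ is a type~(i) gauge (with the verification of $u(\xi)u(-\xi)=1$) followed by a harmless scalar multiplier in the spectral parameter, neither of which disturbs the right dynamical reflection equation for $K_{Ba}$.
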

\begin{proof}
By Proposition \ref{propgauge} the matrix $K_{Ba}(z,\xi)$ 
is also a right dynamical $K$-matrix with respect to the gauged dynamical $R$-matrix $R_{8vSOS}(z,\xi)$. 
Now use Proposition \ref{corrtoface}.
\end{proof}
Solutions to the boundary Yang-Baxter equations for the eight vertex face model, or equivalently, by vertex-face correspondences, to reflection equations for the eight vertex model itself, have been computed by direct means in \cite{dVG, IK, HSFY} from the vertex perspective and in \cite{FHS,BP,KH} from the face perspective (see \cite{HK} for a discussion of the vertex-face correspondence in this context). See \cite[4.3]{BP} for a direct derivation of the solutions of the boundary Yang-Baxter equations for the eight vertex face model, which are also parametrised (but in a different way) by four degrees of freedoms.

\subsection{Boundary qKZB equations}

A unitary solution of the dynamical quantum Yang-Baxter equation satisfying the ice-rule gives rise to cocycles of the extended affine symmetric group \cite{Fe2}.
This leads to a compatible system of $q$-difference equations, known as the quantum Knizhnik-Zamolodchikov-Bernard (KZB) equations, see \cite{Fe2,FTV}. In this
subsection we establish the analog of this result in the presence of boundaries, resulting in a cocycle of the affine Weyl group of type $C_n$ and a new type of compatible
systems of  $q$-difference equations, which we will call boundary quantum KZB equations.

We return to the setting of Subsection \ref{DynRK}.
If $f: \mathbb{C}\rightarrow V^{\otimes n}$ is a meromorphic $V^{\otimes n}$-valued function then we define
\[
(T_{\alpha h_i}f)(\xi):=\sum_{\underline{\mu}}f_{\underline{\mu}}(\xi+\alpha\mu_i)
\]
if $f(\xi)=\sum_{\underline{\mu}} f_{\underline{\mu}}(\xi)$ is the decomposition of $f$ as the sum of $\bigl(V^{\otimes n}\bigr)_{\underline{\mu}}$-valued meromorphic functions 
$f_{\underline{\mu}}$.
In case of a meromorphic $V^{\otimes n}$-valued function $f(\mathbf{z},\xi)$, the parameter-shift of $T_{\alpha h_i}$ is in the dynamical parameter
$\xi$. When $n=1$ then we write $T_{\alpha h}$ for $T_{\alpha h_1}$.

Let $S(\mathbf{z},\xi)$ be a family of linear operators on $V^{\otimes n}$ depending meromorphically on $(\mathbf{z},\xi)\in\mathbb{C}^n\times\mathbb{C}$. 
Consider the associated matrix difference operator
$T_{-\kappa h_i}ST_{\kappa h_i}$, acting complex linearly on the space $\mathcal{M}(\mathbb{C}^n\times\mathbb{C})\otimes V^{\otimes n}$ of meromorphic
$V^{\otimes n}$-valued functions on $\mathbb{C}^n\times\mathbb{C}$. It is a matrix valued difference operator in the dynamical parameter $\xi$, depending meromorphically
on $\mathbf{z}\in\mathbb{C}^n$. We therefore also denote it by $T_{-\kappa h_i}S(\mathbf{z},\cdot)T_{-\kappa h_i}$.

Let $\underline{K}(z,\xi)$  be a linear operators on $V$ depending meromorphically on $(z,\xi)\in\mathbb{C}^2$. Let 
$P: V\otimes V\rightarrow V\otimes V$
be the permutation operator and consider the  $\textup{End}_{\mathbb{C}}(V^{\otimes n})$-valued difference operator in $w$ defined by
\begin{equation}\label{Mrelnew0}
\begin{split}
M^{s_0}(\mathbf{z}):=&(T_{-\kappa h}\underline{K}(\frac{1}{2}-z_1,\cdot)T_{\kappa h})_1\\
=&T_{-\kappa h_1}\underline{K}_1(\frac{1}{2}-z_1,\cdot)T_{\kappa h_1}.
\end{split}
\end{equation}
\begin{rema}\label{diagonalKrem}
Let $\pi_\mu: V\rightarrow V_\mu$ be the projection along the decomposition $V=\bigoplus_{\nu}V_\nu$. 
For $f(\xi)\in V$ depending meromorphically on $\xi$ we have 
\[
\bigl(\bigl(T_{-\kappa h}\underline{K}(z,\cdot)T_{\kappa h}\bigr)f\bigr)(\xi)=\sum_{\mu,\nu}\underline{K}_\nu^\mu(z,\xi-\kappa\nu)
f_\mu(\xi+\kappa(\mu-\nu))
\]
where $f_\mu(\xi):=\pi_\mu(f(\xi))$ and $\underline{K}_\nu^\mu(z,\xi):=\pi_\nu\underline{K}(z,\xi)|_{V_\mu}$. Hence if the left dynamical $K$-matrix $\underline{K}(z,\xi)$ satisfies
\begin{equation}\label{iceK}
\lbrack \underline{K}(z,\xi),h\rbrack=0
\end{equation}
then $M^{s_0}(\mathbf{z})=\underline{K}_1(\frac{1}{2}-z_1,\cdot -\kappa h_1)$ (which thus simply acts on the space of meromorphic $V^{\otimes n}$-valued functions in 
$(\mathbf{z},\xi)\in\mathbb{C}^n\times\mathbb{C}$ as the linear operator $\underline{K}_1(\frac{1}{2}-z_1,\xi-\kappa h_1)$).
\end{rema}
We have now the following affine version of Proposition \ref{propCocycle0}.
\begin{prop}\label{propCocyclenew}
Let $R(z,\xi)$ be a linear operator on $V\otimes V$
and $K(z,\xi)$, $\underline{K}(z,\xi)$  linear operators on $V$, all depending meromorphically on $(z,\xi)\in\mathbb{C}^2$.
Suppose that $R(z,\xi)$ satisfies the ice-rule \eqref{ice}.
The following two statements are equivalent.
\begin{enumerate}
\item The operators \eqref{Mrelnew0} and \eqref{Mrelnew} are part of a (necessarily unique) set $\{M^v(\mathbf{z})\}_{v\in W}$ of 
$\textup{End}_{\mathbb{C}}(V^{\otimes n})$-valued meromorphic difference operators $M^v(\mathbf{z})$
in $\xi$, depending meromorphically on $\mathbf{z}\in\mathbb{C}^n$, and 
satisfying the cocycle conditions
\[
M^e(\mathbf{z})=\textup{Id},\qquad M^{uv}(\mathbf{z})=
M^u(\mathbf{z})M^v(u^{-1}\mathbf{z})
\]
for all $u,v\in W$;
\item $R(z,\xi)$ is a unitary dynamical $R$-matrix, and $\underline{K}(z,\xi)$ and $K(z,\xi)$ are associated unitary left and right
dynamical $K$-matrices respectively.
\end{enumerate}
\end{prop}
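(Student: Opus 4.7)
Both implications are established by the same calculation, namely by translating each Coxeter relation for the affine Weyl group $W$ of type $C_n$ into a matching algebraic identity on the prescribed generator values $M^{s_0},\ldots,M^{s_n}$. Recall that $W$ is a Coxeter group with simple reflections $s_0,s_1,\ldots,s_n$ and defining relations (i) $s_i^2=e$, (ii) $s_is_j=s_js_i$ for $|i-j|>1$, (iii) $s_is_{i+1}s_i=s_{i+1}s_is_{i+1}$ for $1\le i<n-1$, and (iv) the two length-$4$ braids $s_0s_1s_0s_1=s_1s_0s_1s_0$ and $s_{n-1}s_ns_{n-1}s_n=s_ns_{n-1}s_ns_{n-1}$. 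A standard Coxeter-group argument shows that any family $\{M^v(\mathbf{z})\}_{v\in W}$ satisfying the cocycle identity $M^{uv}(\mathbf{z})=M^u(\mathbf{z})M^v(u^{-1}\mathbf{z})$ is uniquely determined by its values on the $s_i$, and that such prescribed values extend to a (necessarily unique) cocycle if and only if they satisfy the image of each defining relation under the cocycle identity. The plan is therefore to verify, one relation at a time, that (1) and (2) demand exactly the same equations on the $M^{s_i}$.

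The easy part of the bookkeeping goes as follows. For $1\le i<n$ the relation $s_i^2=e$ translates, after cancelling $P_{i,i+1}^2=\textup{Id}$, into $R_{21}(z,\xi')R(-z,\xi')=\textup{Id}$ at the shifted dynamical parameter $\xi'=2\xi-2\kappa(h_1+\cdots+h_{i-1})$; this is unitarity \eqref{unitary}. The relation $s_n^2=e$ gives \eqref{unitaryK}, and $s_0^2=e$ gives unitarity of $\underline{K}$ once the inner $T_{\kappa h_1}T_{-\kappa h_1}=\textup{Id}$ cancels. The commutations $s_is_j=s_js_i$ with $|i-j|>1$ are automatic from disjoint supports; the only case worth checking is $\{0,i\}$ with $i\ge 2$, where the $T_{\pm\kappa h_1}$ in $M^{s_0}$ commute with $M^{s_i}$ because the latter preserves every $h_1$-weight space. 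The braid relations (iii), after absorbing the permutations and accounting for the spectral shifts $s_i\mathbf{z}$, reduce on tensor positions $i,i+1,i+2$ to the dynamical quantum Yang-Baxter equation \eqref{qdYBintro}: the factor $-2\kappa\sum_{j<i}h_j$ sits as a common additive shift of $\xi$, while the $-2\kappa h_i$ picked up when passing from $M^{s_i}$ to $M^{s_{i+1}}$ produces exactly the $h_k$-shifts distinguishing the three arguments in \eqref{qdYBintro}. The length-$4$ relation at $\{s_{n-1},s_n\}$ reduces in the same way to the right dynamical reflection equation \eqref{reflectioneqn}; the factor-of-two mismatch between the ``$2\xi$'' in $M^{s_{n-1}}$ and the ``$\xi$'' in $M^{s_n}$ produces precisely the ``$2\xi$'' in $R$ and the ``$\xi-\kappa h_j$'' in $K$ appearing in \eqref{reflectioneqn}.

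The main obstacle is the remaining length-$4$ relation $s_0s_1s_0s_1=s_1s_0s_1s_0$, which must yield the left dynamical reflection equation \eqref{reflectioneqnleft}. Here $M^{s_0}=T_{-\kappa h_1}\underline{K}_1(\tfrac12-z_1,\cdot)T_{\kappa h_1}$ is a genuine difference operator and does not collapse to a multiplication operator unless $\underline{K}$ satisfies the ice rule (which in general it does not, cf.\ Remark \ref{diagonalKrem}). To carry out the verification I would expand both sides of the braid relation as difference operators on tensor positions $1,2$, push every shift $T_{\pm\kappa h_i}$ to the outside using $T_{\alpha h_i}F(\xi)=F(\xi+\alpha h_i)T_{\alpha h_i}$ (interpreted through the weight decomposition), and crucially use the ice rule \eqref{ice} for $R$ to pass each $R_{12}$ through the composite shift $T_{\alpha(h_1+h_2)}$ without obstruction, since $[R,h_1+h_2]=0$. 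After reorganisation the outer $T_{\pm\kappa h_1}$ on the two sides cancel, the remaining interior shifts convert the bare ``$\xi$'' in each $\underline{K}_j$ into ``$\xi+\kappa h_k$'' for the other index $k$, and the ``$2\xi$'' appearing in each $R_{12}$-factor is upgraded to ``$2\xi+2\kappa\Delta(h)$''; these are exactly the shifts prescribed by \eqref{reflectioneqnleft}. The bookkeeping is intricate but mechanical, and once completed the equivalence of the braid relation with \eqref{reflectioneqnleft} is manifest, closing both directions of the proposition.
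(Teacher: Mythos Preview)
Your proposal is correct and follows essentially the same approach as the paper. The paper's own proof simply observes that the $W_0$-part of the cocycle already appears in Proposition~\ref{propCocycle0} (whose proof is likewise ``direct computation'') and then states that the affine extension---i.e.\ the relations involving $s_0$, in particular $s_0^2=e$, $s_0s_i=s_is_0$ for $i\ge 2$, and $s_0s_1s_0s_1=s_1s_0s_1s_0$---``follows by a direct computation''; you have written out exactly what that direct computation consists of, correctly identifying the $\{s_0,s_1\}$ braid as the source of the left dynamical reflection equation and the r\^ole of the ice rule in commuting $R_{12}$ past $T_{\alpha(h_1+h_2)}$.
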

\begin{proof}
For $v\in W_0$ the $M^v(\mathbf{z})$ coincide with the cocycle values from Proposition \ref{propCocycle0}. 
 The extension to the affine setup now follows by a direct computation.
\end{proof}
Proposition \ref{propCocyclenew} should be compared to \cite[Prop. 4.1 \& Thm. 4.2]{Fe2} (see also \cite{FTV}), which deals with
the extended affine symmetric group.

\begin{rema}\label{totrivia}
Note that the cocycle values $M^v(\mathbf{z})$ ($v\in W$) are $\textup{End}_{\mathbb{C}}\bigl(V^{\otimes n}\bigr)$-valued difference operators in $\xi$, depending meromorphically
on $\mathbf{z}\in\mathbb{C}^n$. For $v\in W_0$ they simply act as multiplication operators,
\[(M^v(\mathbf{z})f(\mathbf{z},\cdot))(\xi)=M^v(\mathbf{z},\xi)f(\mathbf{z},\xi),\qquad v\in W_0,
\]
with the linear operator $M^v(\mathbf{z},\xi)$ on $V^{\otimes n}$ 
as defined in Proposition \ref{propCocycle0}. In fact, if $\underline{K}(\mathbf{z},\xi)$ satisfies \eqref{iceK} then all cocycle values $M^v(\mathbf{z})$ ($v\in W$)
act as multiplication operators in view of Remark \ref{diagonalKrem}.
\end{rema}

\begin{defi}
Suppose that $R(z,\xi): V\otimes V\rightarrow V\otimes V$ is a unitary solution of the dynamical quantum Yang-Baxter equation satisfying the ice-rule
and suppose that $\underline{K}(z,\xi)$ and $K(z,\xi)$ are associated unitary left and right dynamical $K$-matrices. Let $\{M^u(\mathbf{z})\}_{u\in W}$ be the $W$-cocycle
of difference operators in $\xi$ as defined in Proposition \ref{propCocyclenew}. 

A $V^{\otimes n}$-valued meromorphic function $f(\mathbf{z},\xi)$ in $(\mathbf{z},\xi)\in\mathbb{C}^n\times\mathbb{C}$ is said to satisfy the boundary quantum Knizhnik-Zamolodchikov-Bernard (KZB) equations if
\begin{equation}\label{qKZB}
M^{\tau(\lambda)}(\mathbf{z})f(\mathbf{z}-\lambda,\,\cdot\,)=f(\mathbf{z},\,\cdot\,)\qquad \forall\,\lambda\in\mathbb{Z}^n.
\end{equation}
\end{defi}
The boundary quantum KZB equations \eqref{qKZB} form a compatible systems of difference equations in both $\mathbf{z}$ and $\xi$. 
Note that the translation action on the dynamical parameter $\xi$ is hidden in the transport operators $M^{\tau(\lambda)}(\mathbf{z})$ ($\lambda\in\mathbb{Z}^n$). 
\begin{rema}
The space of $V^{\otimes n}$-valued meromorphic functions $f(\mathbf{z},\xi)$ in $(\mathbf{z},\xi)\in\mathbb{C}^n\times\mathbb{C}$
satisfying the boundary quantum KZB equations \eqref{qKZB} is invariant for the $W_0$-action
\[
(v\cdot f)(\mathbf{z},\xi):=M^v(\mathbf{z},\xi)f(v^{-1}\mathbf{z},\xi),\qquad v\in W_0.
\]
\end{rema}

\begin{rema}\label{diagonalKrem2}
If the left dynamical $K$-matrix $\underline{K}(z,\xi)$ satisfies \eqref{iceK}, then the transport operators of the 
boundary quantum KZB equations are linear operators $M^{\tau(\lambda)}(\mathbf{z},\xi)$ on $V^{\otimes n}$
depending meromorphically on $(\mathbf{z},\xi)$ (see Remark \ref{totrivia}). Hence, under the additional assumption \eqref{iceK} on $\underline{K}(z,\xi)$,
the boundary quantum KZB equations reduce to the compatible system
\begin{equation*}
M^{\tau(\lambda)}(\mathbf{z},\xi)f(\mathbf{z}-\lambda,\xi)=f(\mathbf{z},\xi)\qquad \forall\,\lambda\in\mathbb{Z}^n
\end{equation*}
of difference equations acting only on the spectral parameters $\mathbf{z}$.
\end{rema}
The boundary quantum KZB equations are equivalent to the set of equations
\begin{equation}\label{bqKZBalt}
M^{\tau(-e_i)}(\mathbf{z})f(\mathbf{z}+e_i,\cdot)=f(\mathbf{z},\cdot)\qquad i=1,\ldots,n.
\end{equation}
The left hand side of \eqref{bqKZBalt}
can be explicitly expressed in terms of the dynamical $R$-and $K$-matrices using the cocycle property of $\{M^v(\mathbf{z})\}_{v\in W}$
(Proposition \ref{propCocyclenew}) and \eqref{taui}. We give the explicit expression of \eqref{bqKZBalt} for $i=1$,
the explicit expressions for arbitrary $i$
then follow from 
\[
M^{\tau(-e_{i+1})}(\mathbf{z})=M^{s_i}(\mathbf{z})M^{\tau(-e_i)}(s_i\mathbf{z})M^{s_i}(s_i\mathbf{z}+e_i)
\]
recursively.
For $i=1$ the left hand side of \eqref{bqKZBalt} is
\begin{equation*}
\begin{split}
\bigl(M^{\tau(-e_1)}&(\mathbf{z})f(\mathbf{z}+e_1,\cdot)\bigr)(\xi)=R_{21}(z_1-z_{2},2\xi)R_{31}(z_1-z_{3},2\xi-2\kappa h_{2})\\
&\times\cdots\times R_{n1}(z_1-z_n,2\xi-2\kappa(h_{2}+\cdots+h_{n-1}))K_1(z_1,\xi-\kappa(h_2+h_3+\cdots +h_n))\\
&\times R_{1n}(z_1+z_n,2\xi-2\kappa(h_2+h_3+\cdots+h_{n-1}))\cdots R_{13}(z_1+z_3,2\xi-2\kappa h_2)\\
&\times R_{12}(z_1+z_2,2\xi)\sum_{\underline{\mu},\underline{\nu}}\underline{K}_{\nu_1}^{\mu_1}(\frac{1}{2}+z_1,\xi-\kappa\nu_1)
f_{\underline{\mu}}(\mathbf{z}+e_1,\xi+\kappa(\mu_1-\nu_1)).
\end{split}
\end{equation*}
In the special case that the left dynamical $K$-matrix $\underline{K}(z,\xi)$ satisfies \eqref{iceK}, the left hand side of \eqref{bqKZBalt} 
for $i=1$ takes on the simpler form
\begin{equation*}
\begin{split}
M^{\tau(-e_1)}&(\mathbf{z},\xi)f(\mathbf{z}+e_1,\xi)=R_{21}(z_1-z_{2},2\xi)R_{31}(z_1-z_{3},2\xi-2\kappa h_{2})\\
&\times\cdots\times R_{n1}(z_1-z_n,2\xi-2\kappa(h_{2}+\cdots+h_{n-1}))K_1(z_1,\xi-\kappa(h_2+h_3+\cdots +h_n))\\
&\times R_{1n}(z_1+z_n,2\xi-2\kappa(h_2+h_3+\cdots+h_{n-1}))\cdots R_{13}(z_1+z_3,2\xi-2\kappa h_2)\\
&\times R_{12}(z_1+z_2,2\xi)\underline{K}_1(\frac{1}{2}+z_1,\xi-\kappa h_1)f(\mathbf{z}+e_1,\xi)
\end{split}
\end{equation*}
due to Remark \ref{diagonalKrem} and Remark \ref{diagonalKrem2}.

We now apply the construction of the boundary quantum KZB equations to Baxter's dynamical $R$-matrix $R_{Ba}(z,\xi)$ and the 
associated $4$-parameter family of unitary dynamical $K$-matrices.
So in the following corollary we take $V=\mathbb{C}^2=\mathbb{C}v_+\oplus\mathbb{C}v_-$ and $hv_{\epsilon}=\epsilon v_\epsilon$ for $\epsilon\in\{\pm\}$.

\begin{cor}\label{9bqKZB}
Let $R_{Ba}(z,\xi)$ be Baxter's dynamical $R$-matrix given explicitly by \eqref{BaSol} (it depends 
on $q$ and the bulk coupling parameter $\kappa$).
Fix four left boundary coupling parameters $\zeta_l,\zeta_l^\prime,\upsilon_l,\upsilon_l^\prime$ and four right boundary coupling parameters
$\zeta,\zeta^\prime,\upsilon,\upsilon^\prime$ and set
\begin{equation*}
\begin{split}
\underline{K}_{Ba}^l(z,\xi)&:=K_{Ba}(z,-\xi;\zeta_l,\zeta_l^\prime,\upsilon_l,\upsilon_l^\prime),\\
K_{Ba}^r(z,\xi)&:=K_{Ba}(z,\xi;\zeta,\zeta^\prime,\upsilon,\upsilon^\prime)
\end{split}
\end{equation*}
with $K_{Ba}(z,\xi;\zeta,\zeta^\prime,\upsilon,\upsilon^\prime)$ given by \eqref{KBa}.
Then $\underline{K}_{Ba}^l(z,\xi)$ and $K_{Ba}^r(z,\xi)$ are unitary left and right 
dynamical $K$-matrices associated to $R_{Ba}(z,\xi)$ respectively. 
Let $\{M_{Ba}^v(\mathbf{z})\}_{v\in W}$ be the associated $W$-cocycle of $\textup{End}_{\mathbb{C}}\bigl(\bigl(\mathbb{C}^2\bigr)^{\otimes n}\bigr)$-valued difference operators 
in $\xi$ depending meromorphically on $\mathbf{z}\in\mathbb{C}^n$
(see Proposition \ref{propCocyclenew}). The corresponding quantum KZB equations
\begin{equation}\label{BaqKZB}
M^{\tau(\lambda)}_{Ba}(\mathbf{z})f(\mathbf{z}-\lambda,\cdot)=f(\mathbf{z},\cdot),\qquad \forall\,\lambda\in\mathbb{Z}^n
\end{equation}
for meromorphic $(\mathbb{C}^2)^{\otimes n}$-valued functions $f(\mathbf{z},\xi)$ in $(\mathbf{z},\xi)\in\mathbb{C}^n\times\mathbb{C}$
is a compatible system of difference equations in $\mathbf{z}$ and $\xi$ with elliptic coefficients that depends, besides on $q$, on nine 
coupling parameters $\kappa,\zeta_l,\zeta_l^\prime,\upsilon_l,\upsilon_l^\prime,
\zeta,\zeta^\prime,\upsilon,\upsilon^\prime$.
\end{cor}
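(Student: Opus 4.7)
The plan is to reduce the corollary to the machinery already in place, namely Corollary \ref{KmatrixTHM}, Remark \ref{lrchange}, and Proposition \ref{propCocyclenew}. Essentially nothing new has to be computed: the work is in verifying the hypotheses for each of those results and then reading off the consequences.

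First I would verify that $K_{Ba}^r(z,\xi)$ and $\underline{K}_{Ba}^l(z,\xi)$ are, respectively, a unitary right and a unitary left dynamical $K$-matrix associated to $R_{Ba}(z,\xi)$. For $K_{Ba}^r(z,\xi)$ this is immediate: by definition it equals $K_{Ba}(z,\xi;\zeta,\zeta^\prime,\upsilon,\upsilon^\prime)$, and Corollary \ref{KmatrixTHM} asserts exactly that this is a unitary right dynamical $K$-matrix for $R_{Ba}(z,\xi)$. For $\underline{K}_{Ba}^l(z,\xi)$ I would invoke Remark \ref{lrchange}, using the fact (noted below \eqref{BaSol}) that $R_{Ba}(z,\xi)$ is dynamically $P$-symmetric: applying Corollary \ref{KmatrixTHM} to the left boundary parameters $\zeta_l,\zeta_l^\prime,\upsilon_l,\upsilon_l^\prime$ shows that $K_{Ba}(z,\xi;\zeta_l,\zeta_l^\prime,\upsilon_l,\upsilon_l^\prime)$ is a unitary right dynamical $K$-matrix, and then $\underline{K}_{Ba}^l(z,\xi)=K_{Ba}(z,-\xi;\zeta_l,\zeta_l^\prime,\upsilon_l,\upsilon_l^\prime)$ is a unitary left dynamical $K$-matrix by Remark \ref{lrchange}.

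Second I would note that $R_{Ba}(z,\xi)$ satisfies the ice rule, as is visible from the block structure of \eqref{BaSol}. With that observation, all hypotheses of Proposition \ref{propCocyclenew} are in place, and the proposition produces the $W$-cocycle $\{M_{Ba}^v(\mathbf{z})\}_{v\in W}$ of $\textup{End}_{\mathbb{C}}\bigl((\mathbb{C}^2)^{\otimes n}\bigr)$-valued difference operators in $\xi$, with meromorphic dependence on $\mathbf{z}\in\mathbb{C}^n$. Compatibility of the system \eqref{BaqKZB} is then automatic: since the translation subgroup $\tau(\mathbb{Z}^n)\subset W$ is commutative, the cocycle identity $M^{\tau(\lambda+\mu)}_{Ba}(\mathbf{z})=M^{\tau(\lambda)}_{Ba}(\mathbf{z})M^{\tau(\mu)}_{Ba}(\mathbf{z}-\lambda)$ guarantees that the equations \eqref{BaqKZB} for different $\lambda$ are mutually consistent. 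That the coefficients are elliptic follows from the fact that the matrix entries of $R_{Ba}(z,\xi)$, $\underline{K}_{Ba}^l(z,\xi)$ and $K_{Ba}^r(z,\xi)$ are all quotients of products of theta functions (up to an exponential prefactor), and the cocycle values are built from these via multiplication and substitutions of the form $\xi\mapsto\xi-2\kappa(h_{i_1}+\cdots+h_{i_r})$. Finally the parameter count: $R_{Ba}(z,\xi)$ contributes the bulk parameter $\kappa$, while $\underline{K}_{Ba}^l$ and $K_{Ba}^r$ contribute $4+4=8$ boundary parameters, for a total of $9$ free coupling parameters in addition to $q$.

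The only step that really requires care is making the statement about $\underline{K}_{Ba}^l(z,\xi)$ precise; the four-parameter family from Corollary \ref{KmatrixTHM} is a family of right dynamical $K$-matrices, so one has to be explicit that passing through the dynamical $P$-symmetry of $R_{Ba}$ via Remark \ref{lrchange} turns that same family, evaluated at the left boundary parameters and with $\xi\mapsto-\xi$, into a four-parameter family of left dynamical $K$-matrices. Once this bookkeeping is done, the rest is a direct application of Proposition \ref{propCocyclenew} and the cocycle property.
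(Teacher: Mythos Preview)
Your proposal is correct and follows essentially the same approach as the paper: the paper's proof simply cites Corollary \ref{KmatrixTHM} and Remark \ref{lrchange} to establish that $\underline{K}_{Ba}^l$ and $K_{Ba}^r$ are left and right dynamical $K$-matrices, leaving the rest (ice rule, existence of the cocycle via Proposition \ref{propCocyclenew}, compatibility, ellipticity, parameter count) implicit in the surrounding discussion. Your write-up spells out these implicit steps explicitly, which is perfectly fine and arguably clearer.
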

\begin{proof}
The fact that $K^l(z,\xi)$ and $K^r(z,\xi)$ are left and right dynamical $K$-matrices respectively follows from Corollary \ref{KmatrixTHM} and Remark \ref{lrchange}. 
\end{proof}
\begin{rema}
It is natural to expect that a vertex-face transformation (cf. \cite{HK}) transforms the nine parameter family \eqref{BaqKZB} of elliptic boundary quantum KZB equations into
a nine parameter family of elliptic boundary qKZ equations defined in terms of the XYZ spin-$\frac{1}{2}$ $R$-matrix and its associated $K$-matrices. We expect
that a difference Cherednik-Matsuo type of correspondence will relate the solutions of these compatible systems of difference equations to common eigenfunctions of 
the nine parameter family of elliptic Ruijsenaars' systems of type C introduced by van Diejen \cite{vD} and Komori \& Hikami \cite{KH0}.
\end{rema}

\section{The connection problem for quantum affine KZ equations} 

Cherednik \cite[\S 1.3.2]{CQInd} introduced the
Baxterization of a finite dimensional affine Hecke algebra module using the affine intertwiners of the associated 
double affine Hecke algebra. It
leads to a consistent system of first-order difference equations acting on vector-valued multivariate meromorphic functions, 
called the quantum affine Knizhnik-Zamolodchikov (KZ)
equations. These equations are equivariant with respect to a natural action of the underlying Weyl group. 

A natural basis of power series solutions of the quantum affine KZ equations,
defined in terms of their asymptotic behaviour deep in a fixed Weyl chamber, was constructed in 
\cite{vMS,vM,SAnn} in case of minimal principal series. We extend this result to arbitrary principal series modules, and
we also solve the associated connection problem. 
the affine Hecke algebra of type $C$. 

We explain how a special case of this general theory leads to the results on lattice models with boundaries as discussed in the previous section.
\subsection{Notations}
We recall here some of the notations from \cite[\S 1.1 \& \S 2]{S2}, which are well 
suited to treat the twisted and untwisted Cherednik-Macdonald theory at the same time (see \cite{S2, SBook} 
for details). The initial data is given by a five tuple $D=(R_0,\Delta_0,\bullet,\Lambda,\widetilde{\Lambda})$, which is defined as follows.

The first entry $R_0$ is a finite, reduced crystallographic root system in an Euclidean space
$(E,(\cdot,\cdot))$, irreducible within the subspace $V$ of $E$ spanned by the roots. The second entry 
$\Delta_0=(\alpha_1,\ldots,\alpha_n)$ is an ordered basis of the root system $R_0$. The third entry 
$\bullet$ equals $u$ or $t$ ("$u$" stands for untwisted and "$t$" for twisted). The fourth and fifth entries $\Lambda$ and $\widetilde{\Lambda}$ are lattices
that are defined as follows.

Write $R_0=R_0^+\cup R_0^-$ for the decomposition of $R_0$ in positive and negative roots with respect to the basis $\Delta_0$. 
The root system $\widetilde{R}_0$
$\bullet$-dual to $R_0$ is defined as
\begin{equation*}
\widetilde{R}_0:=\{\widetilde{\alpha}:=\mu_\alpha\alpha^\vee\,\, | \,\, \alpha\in R_0\},
\end{equation*}
where $\alpha^\vee=\frac{2\alpha}{|\alpha|^2}$ is the co-root of $\alpha$ and
\begin{equation*}
\mu_\alpha:=
\begin{cases}
1 \qquad &\hbox{ if } \bullet=u,\\
\frac{|\alpha|^2}{2}\qquad &\hbox{ if } \bullet=t.
\end{cases}
\end{equation*}
So in the untwisted case ($\bullet=u$), $\widetilde{R}_0$ is the co-root system $R_0^\vee=\{\alpha^\vee\}_{\alpha\in R_0}$, while in the twisted case ($\bullet=t)$,
$\widetilde{R}_0=R_0$. Let $W_0\subseteq O(E)$ be the Weyl group of $R_0$, generated by the orthogonal reflections $s_\alpha$ in the hyperplanes $\alpha^{-1}(0)$
($\alpha\in R_0$). It is a Coxeter group with Coxeter generators the simple reflections $s_i:=s_{\alpha_i}=s_{\widetilde{\alpha}_i}$ ($i=1,\ldots,n$).

Write $Q$ and $\widetilde{Q}$ for the root lattices of $R_0$ and $\widetilde{R}_0$ respectively. Let $Q^\vee$ and $\widetilde{Q}^\vee$
be the co-root lattice of $R_0$ and $\widetilde{R}_0$ respectively. The fourth entry $\Lambda\subseteq E$ in the five-tuple $D$ of initial data 
is a full lattice in $E$ satisfying the conditions
\[
Q\subseteq\Lambda,\qquad (\Lambda,Q^\vee)\subseteq\mathbb{Z}.
\]
The fifth entry $\widetilde{\Lambda}\subseteq E$ is a full lattice in $E$ satisfying the conditions
\[
\widetilde{Q}\subseteq\widetilde{\Lambda},\qquad (\widetilde{\Lambda},\widetilde{Q}^\vee)\subseteq\mathbb{Z}.
\]

We associate to $D$ the reduced irreducible affine root system 
\[
R^\bullet:=\{\alpha^{(r)}:=\mu_\alpha rc+\alpha\}_{r\in\mathbb{Z},\alpha\in R_0},
\] 
where $\mu_\alpha rc+\alpha$ is the affine linear function  $v\mapsto \mu_\alpha r+(\alpha,v)$ on $E$. Let $s_{\alpha^{(r)}}$ be the orthogonal reflection in the affine hyperplane 
$\bigl(\alpha^{(r)}\bigr)^{-1}(0)\subset E$. Then $s_{\alpha^{(r)}}=\tau(-r\widetilde{\alpha})s_\alpha$ with $\tau(v^\prime): V\rightarrow V$ for $v^\prime\in V$ the translation
operator
$v\mapsto v+v^\prime$. In particular, the affine Weyl group $W^\bullet$ associated to $R^\bullet$, which is defined to be the group generated by the $s_{\alpha^{(r)}}$ ($\alpha\in R_0, r\in\mathbb{Z}$), is isomorphic to $W_0\ltimes \widetilde{Q}$.
We extend $R^\bullet$ to a (possibly nonreduced) irreducible affine
root system $R(D)$ by adding to $R^\bullet$ the affine roots $2\alpha^{(r)}$ for $r\in\mathbb{Z}$ and $\alpha\in R_0$ satisfying
$(\Lambda,\alpha^\vee)\subseteq 2\mathbb{Z}$. 

The basis $\Delta_0$ of $R_0$ extends to a basis $\Delta:=(\alpha_0,\alpha_1,\ldots,\alpha_n)$ of $R^\bullet$ and $R(D)$ with the additional simple affine root $\alpha_0=\mu_\psi c-\psi$ with $\psi\in R_0$ the highest (reps. highest short) root with respect to $\Delta_0$ if $\bullet=u$ (resp. $\bullet=t$). The basis $\Delta$ of $R^\bullet$
gives a decomposition $R^\bullet=R^{\bullet,+}\cup R^{\bullet,-}$ of $R^\bullet$ in positive and negative affine roots. We write $s_0:=s_{\alpha^{(0)}}$. Note that $s_0=\tau(\widetilde{\psi})s_\psi$. The affine Weyl group is a Coxeter group with Coxeter generators $s_0,s_1,\ldots,s_n$. The corresponding braid relations are
of the form
\[
s_is_j\cdots=s_js_i\cdots \qquad (m_{ij} \textup{factors on both sides})
\]
for $0\leq i\not=j\leq n$, where the $m_{ij}\in\{2,3,4,6,\infty\}$ can be read off from the associated Coxeter graph in the usual manner.

The {\it extended affine Weyl group} is defined to be $W:=W_0\ltimes\widetilde{\Lambda}$. It acts on $E$ by reflections and $\widetilde{\Lambda}$-translations.
Its contragredient action on the space of affine linear functions on $E$ preserves $R^\bullet$ and $R(D)$. The length of $w\in W$ is defined to be
\[
l(w):=\#(R^{\bullet,+}\cap w^{-1}R^{\bullet,-}).
\]
Then $\Omega:=\{w\in W \,\, | \,\, l(w)=0\}$ is an abelian subgroup of $W$, isomorphic to $\widetilde{\Lambda}/\widetilde{Q}$. Conjugation by $\Omega$ stabilises
$W^\bullet$ and 
\[
W\simeq \Omega\ltimes W^\bullet.
\]
More precisely $\Omega$, in its action on affine linear functions on $E$, is preserving
the set $\{\alpha_0,\alpha_2,\ldots,\alpha_n\}$ of simple affine roots. For $\varpi\in\Omega$ the induced bijection
on the index set $\{0,\ldots,n\}$ will also be denoted by $\varpi$, so $\varpi(\alpha_j)=\alpha_{\varpi(j)}$ for $0\leq j\leq n$ and $\varpi\in\Omega$.
Hence conjugation by $\Omega$ already stabilises the set $\{s_0,\ldots,s_n\}$ of simple reflections, 
\[
\varpi s_j \varpi^{-1}= s_{\varpi(j)},\qquad \varpi\in\Omega,\,\, j\in\{0,\ldots,n\}.
\]

Write 
\[\widetilde{\Lambda}_{min}^+:=\{\nu\in\widetilde{\Lambda} \,\, | \,\, (\nu,\widetilde{\alpha}^\vee)\in\{0,1\}\quad \forall\, \alpha\in R_0^+\}.
\]
For $\nu\in\widetilde{\Lambda}_{min}^+$ let 
$u(\nu)\in W$ be the element of minimal length in $\tau(\nu)W_0$. Define 
\[
v(\nu):=u(\nu)^{-1}\tau(\nu)\in W_0,
\] 
so that $\tau(\nu)=u(\nu)v(\nu)$ and
$l(\tau(\nu))=l(u(\nu))+l(v(\nu))$. Then it is known that
\[
\Omega=\{u(\nu)\}_{\nu\in\widetilde{\Lambda}_{min}^+}.
\]

There is an involution on the collection of all initial data $D=(R_0,\Delta_0,\bullet,\Lambda,\widetilde{\Lambda})$, mapping $D$ to 
\[
\widetilde{D}:=(\widetilde{R}_0,\widetilde{\Delta}_0,\bullet,\widetilde{\Lambda},\Lambda)
\]
where $\widetilde{\Delta}_0:=(\widetilde{\alpha}_1,\ldots,\widetilde{\alpha}_n)$. The (extended) affine Weyl group, affine root systems etc., will be denoted by adding tildes.
Note that for both the twisted and the untwisted case, the affine simple root associated to $\widetilde{R}^\bullet$ is given by $\widetilde{\alpha}_0=\mu_{\widetilde{\theta}}c-\widetilde{\theta}$ with $\theta\in R_0^+$ the highest short root  (see \cite[\S 2.2]{S2}). In particular, $\widetilde{s}_0=\tau(\theta)s_\theta$.

Finally we introduce multiplicity functions. A multiplicity function is a $W$-invariant function $\kappa: R(D)\rightarrow \mathbb{R}$. We write $\kappa_a\in\mathbb{R}$ for the
value of $\kappa$ at $a\in R(D)$ and $\kappa_j:=\kappa_{\alpha_j}$ for $0\leq j\leq n$. We furthermore write $\kappa_{2\alpha^{(r)}}:=\kappa_{\alpha^{(r)}}$ if $2\alpha^{(r)}\not\in R(D)$. For a finite root $\alpha\in R_0$ we define
the {\it Askey-Wilson parameters} associated to $\kappa$ and $\alpha$ by 
\[
\{a_\alpha,b_\alpha,c_\alpha,d_\alpha\}:=\{q^{\kappa_\alpha+\kappa_{2\alpha}}, -q^{\kappa_\alpha-\kappa_{2\alpha}}, q_\alpha q^{\kappa_{\alpha^{(1)}}+\kappa_{2\alpha^{(1)}}},
-q_\alpha q^{\kappa_{\alpha^{(1)}}-\kappa_{2\alpha^{(1)}}}\},
\]
where $q_\alpha:=q^{\mu_\alpha}$. We furthermore write $\kappa_j:=\kappa_{\alpha_j}$ for $0\leq j\leq n$.

Given a multiplicity function $\kappa$ on $R(D)$, there exists a unique multiplicity function $\widetilde{\kappa}$ on $R(\widetilde{D})$ satisfying
\[
\widetilde{\kappa}_{\widetilde{\alpha}}=\kappa_\alpha,\qquad \widetilde{\kappa}_{\widetilde{\alpha}^{(1)}}=\kappa_{2\alpha},\qquad 
\widetilde{\kappa}_{2\widetilde{\alpha}}=\kappa_{\alpha^{(1)}},\qquad
\widetilde{\kappa}_{2\widetilde{\alpha}^{(1)}}=\kappa_{2\alpha^{(1)}}
\]
for all $\alpha\in R_0$. We write the Askey-Wilson parameters associated to $\widetilde{\kappa}$ and $\widetilde{\alpha}\in\widetilde{R}_0$ by
$\{\widetilde{a}_\alpha,\widetilde{b}_\alpha,\widetilde{c}_\alpha,\widetilde{d}_\alpha\}$ (we abuse notation here by writing $\alpha$ as sublabel instead of $\widetilde{\alpha}$).
In terms of the original multiplicity function $\kappa$ they are explicitly given by
\[
\{\widetilde{a}_\alpha,\widetilde{b}_\alpha,\widetilde{c}_\alpha,\widetilde{d}_\alpha\}=
\{q^{\kappa_\alpha+\kappa_{\alpha^{(1)}}},-q^{\kappa_\alpha-\kappa_{\alpha^{(1)}}},q_\alpha q^{\kappa_{2\alpha}+\kappa_{2\alpha^{(1)}}},-q_\alpha q^{\kappa_{2\alpha}-
\kappa_{2\alpha^{(1)}}}\},
\]
hence they are obtained from the Askey-Wilson parameters $\{a_\alpha,b_\alpha,c_\alpha,d_\alpha\}$ by interchanging the values 
$\kappa_{2\alpha}$ and $\kappa_{\alpha^{(1)}}$. We also write $\widetilde{\kappa}_j:=\widetilde{\kappa}_{\widetilde{\alpha}_j}$ for $0\leq j\leq n$.
\subsection{Principal series representations}\label{psr}
\begin{defi}
The affine Hecke algebra $H^\bullet(\kappa)$ is the unital associative algebra over $\mathbb{C}$ generated by $T_j$ ($0\leq j\leq n$) with defining relations
\[
T_iT_jT_i\cdots=T_jT_iT_j\cdots \qquad (m_{ij} \textup{ {\it factors on both sides}})
\]
for $0\leq i\not=j\leq n$ and $(T_j-q^{-\kappa_j})(T_j+q^{\kappa_j})=0$ for $0\leq j\leq n$.
\end{defi}
We write $H_0(\kappa)$ for the unital subalgebra of $H^\bullet(\kappa)$ generated by $T_i$ ($1\leq i\leq n$). 

The abelian group $\Omega$ of length zero elements in $W$ acts by algebra automorphisms on $H^\bullet(\kappa)$, with $\varpi\in\Omega$ acting by
$T_j\mapsto T_{\varpi(j)}$ for $0\leq j\leq n$.  We write
\[
H(\kappa):=\Omega\ltimes H^\bullet(\kappa)
\]
for the corresponding crossed product algebra. We call it the {\it extended affine Hecke algebra} associated to $D$ and $\kappa$. The additional generators in $H(\kappa)$
parametrized by $\Omega$ are denoted by $T_{\varpi}$ ($\varpi\in\Omega$).
For $w\in W$ we write
\[
T_w:=T_{\varpi}T_{j_1}T_{j_2}\cdots T_{j_r}\in H(\kappa)
\]
if $w=\varpi s_{j_1}s_{j_2}\cdots s_{j_r}$  in $W$ with $\varpi\in\Omega$, $0\leq j\leq n$ and $l(w)=r$. It gives a well defined complex linear basis $\{T_w\}_{w\in W}$ of $H(\kappa)$.

Write
\[
\widetilde{\Lambda}^+:=\{\nu\in\widetilde{\Lambda} \,\, | \,\, (\nu,\widetilde{\alpha}^\vee)\geq 0\qquad \forall\, \alpha\in R_0^+\}
\]
and define for $\nu\in\widetilde{\Lambda}$,
\[
Y^\nu:=T_{\tau(\lambda)}T_{\tau(\mu)}^{-1}\in H(\kappa)
\]
if $\nu=\lambda-\mu$ with $\lambda,\mu\in\widetilde{\Lambda}^+$. The $Y^\nu$ are well defined and satisfy
\[
Y^\nu Y^{\nu^\prime}=Y^{\nu+\nu^\prime},\qquad Y^0=1
\]
for $\nu,\nu^\prime\in\widetilde{\Lambda}$. The extended affine Hecke algebra $H(\kappa)$ is generated by $H_0(\kappa)$ and the commuting elements $Y^\nu$ 
($\nu\in\widetilde{\Lambda}$). The additional relations characterizing $H(\kappa)$ in terms of the generators $T_i$ ($1\leq i\leq n$) and $Y^\nu$ ($\nu\in\widetilde{\Lambda}$)
are the Bernstein-Zelevinsky-Lusztig relations
\begin{equation}\label{BZL}
Y^\nu T_i=T_iY^{s_i\nu}+\Bigl(q^{\widetilde{\kappa}_i}\frac{(1-q^{-\widetilde{\kappa}_i-\widetilde{\kappa}_{2\widetilde{\alpha}_i}}Y^{-\widetilde{\alpha}_i})
(1+q^{-\widetilde{\kappa}_i+\widetilde{\kappa}_{2\widetilde{\alpha}_i}}Y^{-\widetilde{\alpha}_i})}{1-Y^{-2\widetilde{\alpha}_i}}-q^{-\widetilde{\kappa}_i}\Bigr)(Y^{s_i\nu}-Y^\nu)
\end{equation}
for $1\leq i\leq n$ and $\nu\in\widetilde{\Lambda}$ (see \cite[(3.4)]{SBook}).

Let $E_{\mathbb{C}}:=\mathbb{C}\otimes E$ be the complexification of $E$ and extend the scalar product $(\cdot,\cdot)$ to a complex bilinear form on $E_{\mathbb{C}}$.
For $I\subseteq\{1,\ldots,n\}$ write $H_I(\kappa)$ for the subalgebra of $H_I(\kappa)$ generated by $T_i$ ($i\in I$) and $Y^\nu$ ($\nu\in\widetilde{\Lambda}$).
Set
\[
E_{\mathbb{C},I}^\kappa:=\{\gamma\in E_{\mathbb{C}} \,\, | \,\, (\widetilde{\alpha}_i,\gamma)=\widetilde{\kappa}_{\widetilde{\alpha}_i}+\widetilde{\kappa}_{2\widetilde{\alpha}_i}
\quad \forall\, i\in I \}.
\]
\begin{lem}
Let $I\subseteq \{1,\ldots,n\}$ and $\gamma\in E_{\mathbb{C},I}^\kappa$. There exists a unique unital algebra map 
\[
\chi_{I,\gamma}: H_I(\kappa)\rightarrow\mathbb{C}
\]
satisfying
\begin{equation}\label{assignchi}
\begin{split}
\chi_{I,\gamma}(T_i)&=q^{-\kappa_i},\qquad \forall\,i\in I,\\
\chi_{I,\gamma}(Y^\nu)&=q^{-(\nu,\gamma)},\qquad \forall\,\nu\in\widetilde{\Lambda}.
\end{split}
\end{equation}
\end{lem}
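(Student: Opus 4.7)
Uniqueness is immediate: since $T_i$ ($i\in I$) and $Y^\nu$ ($\nu\in\widetilde{\Lambda}$) generate $H_I(\kappa)$, the assignments \eqref{assignchi} determine $\chi_{I,\gamma}$ on a generating set. For existence, one has to verify that \eqref{assignchi} is compatible with the defining relations of $H_I(\kappa)$, namely: the braid relations $T_iT_jT_i\cdots = T_jT_iT_j\cdots$ ($i,j\in I$), the Hecke relations $(T_i-q^{-\kappa_i})(T_i+q^{\kappa_i})=0$ ($i\in I$), the commutation relations $Y^\nu Y^{\nu^\prime}=Y^{\nu+\nu^\prime}$, and the Bernstein-Zelevinsky-Lusztig relations \eqref{BZL} with $i\in I$.

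Three of the four are immediate. The Hecke relation is satisfied because the first factor evaluates to $q^{-\kappa_i}-q^{-\kappa_i}=0$. The commutation relations among the $Y^\nu$ follow at once from $q^{-(\nu,\gamma)}q^{-(\nu^\prime,\gamma)}=q^{-(\nu+\nu^\prime,\gamma)}$. For the braid relations, note that both sides reduce to a power of $q$, and the exponents agree as soon as $\kappa_i=\kappa_j$ whenever $m_{ij}$ is odd; this holds because the $W$-invariance of $\kappa$ forces the multiplicity values along an edge of odd label in the Coxeter graph to coincide.

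The essential point is the BZL relation. Writing it in the polynomial form
\begin{equation*}
\begin{split}
(Y^\nu T_i-T_iY^{s_i\nu})(1-Y^{-2\widetilde{\alpha}_i})=\bigl(&q^{\widetilde{\kappa}_i}(1-q^{-\widetilde{\kappa}_i-\widetilde{\kappa}_{2\widetilde{\alpha}_i}}Y^{-\widetilde{\alpha}_i})(1+q^{-\widetilde{\kappa}_i+\widetilde{\kappa}_{2\widetilde{\alpha}_i}}Y^{-\widetilde{\alpha}_i})\\
&-q^{-\widetilde{\kappa}_i}(1-Y^{-2\widetilde{\alpha}_i})\bigr)(Y^{s_i\nu}-Y^\nu),
\end{split}
\end{equation*}
and applying \eqref{assignchi} together with $\widetilde{\kappa}_{\widetilde{\alpha}_i}=\kappa_{\alpha_i}=\kappa_i$, one substitutes $Y^{-\widetilde{\alpha}_i}\mapsto q^{(\widetilde{\alpha}_i,\gamma)}$. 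The defining condition $(\widetilde{\alpha}_i,\gamma)=\widetilde{\kappa}_i+\widetilde{\kappa}_{2\widetilde{\alpha}_i}$ for $\gamma\in E_{\mathbb{C},I}^\kappa$ makes the factor $(1-q^{-\widetilde{\kappa}_i-\widetilde{\kappa}_{2\widetilde{\alpha}_i}}Y^{-\widetilde{\alpha}_i})$ vanish upon evaluation, so the first term on the right collapses, and after cancelling the common factor $(1-q^{2(\widetilde{\alpha}_i,\gamma)})$ the identity reduces to the tautology $q^{-\kappa_i}(q^{-(\nu,\gamma)}-q^{-(s_i\nu,\gamma)})=q^{-\kappa_i}(q^{-(\nu,\gamma)}-q^{-(s_i\nu,\gamma)})$.

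The main (mild) obstacle is keeping track of the bookkeeping between $\kappa$ and $\widetilde{\kappa}$ in the BZL relation and confirming that the coefficient really is annihilated by the defining condition on $\gamma$; once that cancellation is spotted, all compatibilities fall into place.
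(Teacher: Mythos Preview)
Your proof is correct and follows the same approach as the paper, which simply states that the assumption $\gamma\in E_{\mathbb{C},I}^\kappa$ ensures compatibility with the Bernstein--Zelevinsky--Lusztig relations \eqref{BZL} (using $\widetilde{\kappa}_i=\kappa_i$). You have merely made explicit what the paper leaves implicit: the verification of the braid, Hecke, and commutation relations, and the key observation that the factor $(1-q^{-\widetilde{\kappa}_i-\widetilde{\kappa}_{2\widetilde{\alpha}_i}}Y^{-\widetilde{\alpha}_i})$ vanishes under the substitution $Y^{-\widetilde{\alpha}_i}\mapsto q^{(\widetilde{\alpha}_i,\gamma)}$ precisely because of the defining condition on $E_{\mathbb{C},I}^\kappa$.
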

\begin{proof}
The assumption $\gamma\in E_{\mathbb{C},I}^\kappa$ ensures that the assignment 
\eqref{assignchi} respects the crossing relations \eqref{BZL} (we use here that $\widetilde{\kappa}_i=\kappa_i$ for $i=1,\ldots,n$).
\end{proof}
\begin{defi}
Let $I\subseteq \{1,\ldots,n\}$ and $\gamma\in E_{\mathbb{C},I}^\kappa$. The induced representation
\[
M_I(\gamma):=\textup{Ind}_{H_I(\kappa)}^{H(\kappa)}\bigl(\mathbb{C}_{\chi_{I,\gamma}}\bigr)
\]
is called the principal series representation associated to $I$, with central character $q^{-\gamma}$. We write $\pi_\gamma^I$ for its representation map.
\end{defi}
Let $\gamma\in E_{\mathbb{C},I}^\kappa$. For $w\in W_0$ set
\[
v_w^I(\gamma):=T_w\otimes_{(H_I(\kappa),\chi_{I,\gamma})}1\in M_I(\gamma).
\]
The elements $v_w^I(\gamma)$ ($w\in W_0$) span $M_I(\gamma)$.

Let $W_{0,I}\subset W_0$ be the standard parabolic subgroup generated by the simple reflections $s_i$ ($i\in I$). Denote by $W_0^I$ the minimal
coset representatives of $W_0/W_{0,I}$. If $w=uv\in W_0$ with $u\in W_0^I$ and $v\in W_{0,I}$ then 
\[
v_w^I(\gamma)=\chi_{I,\gamma}(T_v)v_u^I(\gamma).
\]
The set $\{v_w^I(\gamma)\}_{w\in W_0^I}$ is a linear basis of the principal series module $M_I(\gamma)$. 

In case of minimal principal series ($I=\emptyset$) we omit $I$ from the notations. In particular we write
$\pi_\gamma: H(\kappa)\rightarrow\textup{End}_{\mathbb{C}}\bigl(M(\gamma)\bigr)$ for the corresponding representation map. 
Note that $\pi_\gamma$ is defined for all $\gamma\in E_{\mathbb{C}}$.

Let $\gamma\in E_{\mathbb{C},I}^\kappa$. There exists a unique surjective intertwiner 
$\phi_{I,\gamma}: M(\gamma)\twoheadrightarrow M_I(\gamma)$ of $H(\kappa)$-modules
satisfying for $w=uv\in W_0$ ($u\in W_0^I$, $v\in W_{0,I}$),
\[
\phi_{I,\gamma}(v_w(\gamma))=v_w^I(\gamma)=\chi_{I,\gamma}(T_v)v_u^I(\gamma).
\]
Clearly $\phi_{\emptyset,\gamma}=\textup{Id}_{M(\gamma)}$.

We need generic conditions on $\gamma\in E_{\mathbb{C},I}^\kappa$ to ensure that the principal series module $M_I(\gamma)$ is calibrated 
(i.e., simultaneously diagonalisable for the action of the commuting operators $Y^\nu$ ($\nu\in\widetilde{\Lambda}$)), 
cf. \cite[\S 2.5]{S} and references therein. To describe the results we first need to introduce 
intertwiners for minimal principal series modules.

Let $\gamma\in E_{\mathbb{C}}$ and let $e\in W_0$ be the neutral element. Set $A_e^{unn}(\gamma):=\textup{Id}_{M(\gamma)}$.
For $1\leq i\leq n$ define a linear map $A_{s_i}^{unn}(\gamma): M(s_i\gamma)\rightarrow M(\gamma)$ by
\begin{equation}\label{formulaAunn}
A_{s_i}^{unn}(\gamma)v_{\sigma}(s_i\gamma):=q^{-\widetilde{\kappa}_i}(1-q^{2(\widetilde{\alpha}_i,\gamma)})v_{\sigma s_i}(\gamma)
+(D_{\widetilde{\alpha}_i}(\gamma)-q^{-2\chi(-\sigma\widetilde{\alpha}_i)\widetilde{\kappa}_i}(1-q^{2(\widetilde{\alpha}_i,\gamma)}))v_\sigma(\gamma)
\end{equation}
for $\sigma\in W_0$, where $\chi(\widetilde{\alpha})=1$ if $\widetilde{\alpha}\in\widetilde{R}_0^-$ and $\chi(\widetilde{\alpha})=0$ if 
$\widetilde{\alpha}\in\widetilde{R}_0^+$, and with
\begin{equation*}
\begin{split}
D_{\widetilde{\alpha}}(\gamma):=&\bigl(1-q^{-\widetilde{\kappa}_{\widetilde{\alpha}}-\widetilde{\kappa}_{2\widetilde{\alpha}}+(\widetilde{\alpha},\gamma)}\bigr)
\bigl(1+q^{-\widetilde{\kappa}_{\widetilde{\alpha}}+\widetilde{\kappa}_{2\widetilde{\alpha}}+(\widetilde{\alpha},\gamma)}\bigr)\\
=&\bigl(1-\widetilde{a}_{\alpha}^{-1}q^{(\widetilde{\alpha},\gamma)}\bigr)\bigl(1-\widetilde{b}_{\alpha}^{-1}q^{(\widetilde{\alpha},\gamma)}\bigr)
\end{split}
\end{equation*}
for $\alpha\in R_0$. Then $A_{s_i}^{unn}(\gamma): M(s_i\gamma)\rightarrow M(\gamma)$ is an intertwiner of $H(\kappa)$-modules and 
\[
A_{s_i}^{unn}(\gamma)A_{s_i}^{unn}(s_i\gamma)=D_{s_i}(\gamma)D_{s_i}(s_i\gamma)\textup{Id}_{M(\gamma)}
\]
for all $1\leq i\leq n$.
More generally, if $\sigma:=s_{i_1}s_{i_2}\cdots s_{i_r}$
is a reduced expression for $\sigma\in W_0$ then
\[
A_\sigma^{unn}(\gamma):=A_{s_{i_1}}^{unn}(\gamma)A_{s_{i_2}}^{unn}(s_{i_1}\gamma)\cdots A_{s_{i_r}}^{unn}(s_{i_{r-1}}\cdots s_{i_2}s_{i_1}\gamma)
\]
is independent of the choice of reduced expression and defines an intertwiner
\[
A_{\sigma}^{unn}(\gamma): M(\sigma^{-1}\gamma)\rightarrow M(\gamma)\] 
of $H(\kappa)$-modules. It satisfies 
\[
A_{\sigma}^{unn}(\gamma)A_{\sigma^{-1}}^{unn}(\sigma^{-1}\gamma)=D_\sigma(\gamma)D_{\sigma^{-1}}(\sigma^{-1}\gamma)\textup{Id}_{M(\gamma)}
\]
with
\[
D_\sigma(\gamma):=\prod_{\widetilde{\alpha}\in \widetilde{R}_0^+\cap\sigma\widetilde{R}_0^-}D_{\widetilde{\alpha}}(\gamma).
\]
Note that
\begin{equation}\label{cocyclerestricted}
\begin{split}
A_{\sigma\tau}^{unn}(\gamma)&=A_\sigma^{unn}(\gamma)A_\tau^{unn}(\sigma^{-1}\gamma),\\
D_{\sigma\tau}^{unn}(\gamma)&=D_{\sigma}^{unn}(\gamma)D_{\tau}^{unn}(\sigma^{-1}\gamma)
\end{split}
\end{equation}
for $\sigma,\tau\in W_0$ satisfying $l(\sigma\tau)=l(\sigma)+l(\tau)$.

Set $\widetilde{R}_0^{I,\pm}:=\widetilde{R}_0^{\pm}\cap\textup{span}\{\widetilde{\alpha}_i\}_{i\in I}$. Since
\begin{equation}\label{W0Ialt}
W_0^I=\{\sigma\in W_0 \,\, | \,\, \sigma(\widetilde{R}_0^{I,+})\subseteq\widetilde{R}_0^+ \}
\end{equation}
we actually have 
\begin{equation}\label{ID}
D_{\sigma^{-1}}(\gamma)=\prod_{\widetilde{\alpha}\in(\widetilde{R}_0^+\setminus\widetilde{R}_0^{I,+})\cap \sigma^{-1}\widetilde{R}_0^-}
D_{\widetilde{\alpha}}(\gamma)\qquad \forall\,\sigma\in W_0^I.
\end{equation}

\begin{prop}\label{bb}
Assume that $\gamma\in E_{\mathbb{C},I}^\kappa$ satisfies $q^{2(\widetilde{\alpha},\gamma)}\not=1$ for all 
$\widetilde{\alpha}\in \widetilde{R}_0^+\setminus\widetilde{R}_0^{I,+}$.  Set for
$u\in W_0$,
\[
b_{u}^{unn,I}(\gamma):=\phi_{I,\gamma}\bigl(A_{u}^{unn}(\gamma)v_e(u^{-1}\gamma)\bigr)\in M_I(\gamma).
\]
Then 
\begin{enumerate}
\item For all $u\in W_0$ and all $\nu\in\widetilde{\Lambda}$ we have
\[
\pi_\gamma^I(Y^\nu)b_{u}^{unn,I}(\gamma)=q^{-(\nu,u^{-1}\gamma)}b_{u}^{unn,I}(\gamma);
\]
\item
$\{b_{\sigma^{-1}}^{unn,I}(\gamma)\}_{\sigma\in W_0^I}$ is a linear basis of $M_I(\gamma)$;
\item $b_{\sigma^{-1}}^{unn,I}(\gamma)=0$ for all $\sigma\in W_0\setminus W_0^I$.
\end{enumerate}
\end{prop}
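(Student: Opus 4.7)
The plan is to handle (1), (3), and (2) in that order, with (1) being essentially immediate, (3) the key input obtained from the cocycle relation \eqref{cocyclerestricted} and a short direct computation, and (2) following from (1) together with a dimension count.

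Part (1) is immediate: the generator $v_e(u^{-1}\gamma)\in M(u^{-1}\gamma)$ is by construction a $Y^\nu$-eigenvector of eigenvalue $q^{-(\nu,u^{-1}\gamma)}$, and both $A_u^{unn}(\gamma):M(u^{-1}\gamma)\to M(\gamma)$ and $\phi_{I,\gamma}:M(\gamma)\to M_I(\gamma)$ are $H(\kappa)$-module homomorphisms. Since they commute with the $Y^\nu$-action, the composite $b_u^{unn,I}(\gamma)=\phi_{I,\gamma}\bigl(A_u^{unn}(\gamma)v_e(u^{-1}\gamma)\bigr)$ inherits the same eigenvalue.

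For (3), I would exploit the cocycle identity to reduce to a single length-one intertwiner. Given $\sigma\in W_0\setminus W_0^I$, the characterisation \eqref{W0Ialt} supplies some $i\in I$ with $\sigma\widetilde{\alpha}_i\in\widetilde{R}_0^-$, equivalently $l(\sigma s_i)=l(\sigma)-1$. Setting $\tau:=\sigma s_i$, the decomposition $\sigma^{-1}=s_i\tau^{-1}$ is length-additive, so \eqref{cocyclerestricted} gives $A_{\sigma^{-1}}^{unn}(\gamma)=A_{s_i}^{unn}(\gamma)\,A_{\tau^{-1}}^{unn}(s_i\gamma)$, and it suffices to prove that $\phi_{I,\gamma}\circ A_{s_i}^{unn}(\gamma)=0$ as a map $M(s_i\gamma)\to M_I(\gamma)$ for every $i\in I$. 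As both factors are $H(\kappa)$-intertwiners and $v_e(s_i\gamma)$ is cyclic in $M(s_i\gamma)$, it is enough to test at $v_e(s_i\gamma)$. Inserting $\sigma=e$ into \eqref{formulaAunn} and using the identity $D_{\widetilde{\alpha}_i}(\gamma)=0$ for $i\in I$ (an immediate consequence of $(\widetilde{\alpha}_i,\gamma)=\widetilde{\kappa}_i+\widetilde{\kappa}_{2\widetilde{\alpha}_i}$ for $\gamma\in E_{\mathbb{C},I}^\kappa$), one obtains
\[
A_{s_i}^{unn}(\gamma)v_e(s_i\gamma)=q^{-\kappa_i}\bigl(1-q^{2(\widetilde{\alpha}_i,\gamma)}\bigr)\bigl(v_{s_i}(\gamma)-q^{-\kappa_i}v_e(\gamma)\bigr),
\]
and the right-hand side lies in $\ker(\phi_{I,\gamma})$ because $s_i\in W_{0,I}$ forces $\phi_{I,\gamma}(v_{s_i}(\gamma))=\chi_{I,\gamma}(T_i)v_e^I(\gamma)=q^{-\kappa_i}v_e^I(\gamma)$, which cancels against the $v_e(\gamma)$ term.

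For (2), part (1) already places each $b_{\sigma^{-1}}^{unn,I}(\gamma)$ with $\sigma\in W_0^I$ into the $q^{-(\nu,\sigma\gamma)}$-weight space of $M_I(\gamma)$. A short argument shows that under the genericity hypothesis the weights $\{\sigma\gamma\}_{\sigma\in W_0^I}$ are pairwise distinct (if $\sigma\gamma$ and $\sigma'\gamma$ coincided as central characters then $\sigma^{-1}\sigma'$ would stabilise $\gamma$, forcing $q^{2(\widetilde{\alpha},\gamma)}=1$ for some $\widetilde{\alpha}\in\widetilde{R}_0^+\setminus\widetilde{R}_0^{I,+}$, contrary to hypothesis), so any non-zero $b_{\sigma^{-1}}^{unn,I}(\gamma)$ are automatically linearly independent. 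Since $|W_0^I|=\dim M_I(\gamma)$, it remains only to prove that each is non-zero. Iterating \eqref{formulaAunn} produces a Bruhat-triangular expansion
\[
A_{\sigma^{-1}}^{unn}(\gamma)v_e(\sigma\gamma)=\Bigl(\prod_{\widetilde{\alpha}\in\widetilde{R}_0^+\cap\sigma^{-1}\widetilde{R}_0^-}q^{-\widetilde{\kappa}_{\widetilde{\alpha}}}\bigl(1-q^{2(\widetilde{\alpha},\gamma)}\bigr)\Bigr)v_{\sigma^{-1}}(\gamma)+\sum_{w<\sigma^{-1}}c_w\,v_w(\gamma)
\]
in $M(\gamma)$; for $\sigma\in W_0^I$ the index set is contained in $\widetilde{R}_0^+\setminus\widetilde{R}_0^{I,+}$ by \eqref{W0Ialt}, hence the leading coefficient is non-zero by genericity and $b_{\sigma^{-1}}^{unn}(\gamma)\ne 0$ in $M(\gamma)$. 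The main obstacle will be transferring this non-vanishing through the surjection $\phi_{I,\gamma}$: I would handle it by a weight-spectrum analysis showing that, under the stated genericity, the $Y^\nu$-weights of $\ker(\phi_{I,\gamma})$ are contained in $\{w\gamma:w\in W_0\setminus W_0^I\}$ and are therefore disjoint from $\{\sigma\gamma:\sigma\in W_0^I\}$, so the non-zero $\sigma\gamma$-weight vector $b_{\sigma^{-1}}^{unn}(\gamma)$ cannot be annihilated.
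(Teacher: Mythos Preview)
Your arguments for (1) and (3) are fine. For (3) you use cyclicity of $v_e(s_i\gamma)$ to reduce to a single check, which is a bit slicker than the paper's approach of verifying $\phi_{I,\gamma}\circ A_{s_i}^{unn}(\gamma)$ vanishes on every basis vector $v_\tau(s_i\gamma)$; both arrive at the same computation.

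There is, however, a genuine gap in your argument for (2). First a minor slip: iterating \eqref{formulaAunn} gives leading term $v_\sigma(\gamma)$, not $v_{\sigma^{-1}}(\gamma)$ (each $A_{s_i}^{unn}$ right-multiplies the index by $s_i$, and $(\sigma^{-1})^{-1}=\sigma$). More seriously, your proposed ``weight-spectrum analysis'' of $\ker(\phi_{I,\gamma})$ is not justified under the stated hypothesis. The proposition only assumes $q^{2(\widetilde{\alpha},\gamma)}\neq 1$ for $\widetilde{\alpha}\in\widetilde{R}_0^+\setminus\widetilde{R}_0^{I,+}$; nothing prevents $q^{2(\widetilde{\alpha}_i,\gamma)}=1$ for some $i\in I$ (this happens precisely when $\widetilde{\kappa}_i+\widetilde{\kappa}_{2\widetilde{\alpha}_i}=0$, a perfectly admissible choice of multiplicity function). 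In that case $s_i\gamma=\gamma$ as characters, so the weight $\gamma=e\cdot\gamma$ (with $e\in W_0^I$) coincides with $s_i\gamma$ (with $s_i\in W_0\setminus W_0^I$), and your claimed disjointness of weight sets fails. More generally $M(\gamma)$ need not be calibrated under the given hypothesis, so $\ker(\phi_{I,\gamma})$ is only a sum of \emph{generalized} weight spaces, and the analysis would have to be substantially refined.

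The fix is simply to push your triangular expansion in $M(\gamma)$ through $\phi_{I,\gamma}$ directly, which is what the paper does. Since $\sigma\in W_0^I$, the leading term $v_\sigma(\gamma)$ maps to $v_\sigma^I(\gamma)$ with its (nonzero) coefficient intact; and any lower term $v_w(\gamma)$ with $w<\sigma$, written as $w=uv$ with $u\in W_0^I$, $v\in W_{0,I}$, satisfies $u\leq w<\sigma$, hence maps to a multiple of $v_u^I(\gamma)$ with $u<\sigma$. The resulting expansion in $M_I(\gamma)$ is Bruhat-triangular with nonzero leading coefficient, giving both nonvanishing and linear independence at once, with no weight argument needed.
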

\begin{proof}
(1) is trivial.\\
(2) 
By the definition of the unnormalized intertwiner we have for $\sigma\in W_0^I$
\[
b_{\sigma^{-1}}^{unn,I}(\gamma)=\Bigl(\prod_{\widetilde{\alpha}\in(\widetilde{R}_0^+\setminus\widetilde{R}_0^{I,+})\cap \sigma^{-1}\widetilde{R}_0^-}
q^{-\widetilde{\kappa}_{\widetilde{\alpha}}}(1-q^{2(\widetilde{\alpha},\gamma)})\Bigr)
v_\sigma^I(\gamma)+\sum_{\tau\in W_0^I: \tau<\sigma}c_{\sigma,\tau}v_\tau^I(\gamma)
\]
for certain coefficients $c_{\sigma,\tau}\in\mathbb{C}$, where $\leq$ is the Bruhat order. By the assumption that
$q^{2(\widetilde{\alpha},\gamma)}\not=1$ for all $\widetilde{\alpha}\in \widetilde{R}_0^+\setminus\widetilde{R}_0^{I,+}$ the leading coefficient
is nonzero.\\ 
(3) Fix $\sigma\not\in W_0^I$. Let $i\in I$ such that $\sigma=\sigma^\prime s_i$ and $l(\sigma)=l(\sigma^\prime)+1$ for some $i\in I$.
Then
\[
A_{\sigma^{-1}}^{unn}(\gamma)=A_{s_i}^{unn}(\gamma)A_{\sigma^\prime{}^{-1}}^{unn}(s_i\gamma).
\]
Hence
it suffices to show that $\phi_{I,\gamma}\circ A_{s_i}^{unn}(\gamma)$ is the zero map.

Since $\gamma\in E_{\mathbb{C},I}^\kappa$ we have $D_{\widetilde{\alpha}_i}(\gamma)=0$, hence by \eqref{formulaAunn},
\[
A_{s_i}^{unn}(\gamma)v_\tau(s_i\gamma)=q^{-\widetilde{\kappa}_i}(1-q^{2(\widetilde{\alpha}_i,\gamma)})v_{\tau s_i}(\gamma)
-q^{-2\chi(-\tau\widetilde{\alpha}_i)\widetilde{\kappa}_i}(1-q^{2(\widetilde{\alpha}_i,\gamma)})v_\tau(\gamma),\qquad \tau\in W_0.
\]

Suppose that $\tau\in W_0$ satisfies $\tau\widetilde{\alpha}_i\in\widetilde{R}_0^+$ ($\Leftrightarrow$ $l(\tau s_i)=l(\tau)+1$). Then 
$v_{\tau s_i}^I(\gamma)=q^{-\kappa_i}v_\tau^I(\gamma)=q^{-\widetilde{\kappa}_i}v_\tau^I(\gamma)$ and $\chi(-\tau\widetilde{\alpha}_i)=1$,
hence $\phi_{I,\gamma}(A_{s_i}^{unn}(\gamma)v_\tau(s_i\gamma))=0$.

Suppose that $\tau\in W_0$ satisfies $\tau\widetilde{\alpha}_i\in\widetilde{R}_0^-$ ($\Leftrightarrow$ $l(\tau s_i)=l(\tau)-1$). Then
$v_{\tau s_i}^I(\gamma)=q^{\kappa_i}v_\tau^I(\gamma)=q^{\widetilde{\kappa}_i}v_\tau^I(\gamma)$ and 
$\chi(-\tau\widetilde{\alpha}_i)=0$, hence also in this case $\phi_{I,\gamma}(A_{s_i}^{unn}(\gamma)v_\tau(s_i\gamma))=0$.

Hence indeed $\phi_{I,\gamma}\circ A_{s_i}^{unn}(\gamma)$ for $i\in I$ is the zero map. 
\end{proof}

\begin{defi}
Suppose that $\gamma\in E_{\mathbb{C}}$ satisfies $q^{2(\widetilde{\alpha},\gamma)}\not=1$ and $D_{\widetilde{\alpha}}(\gamma)\not=0$
for all $\widetilde{\alpha}\in \widetilde{R}_0^+$. Let $\sigma\in W_0$.
The normalised intertwiner $A_\sigma(\gamma): M(\sigma^{-1}\gamma)\rightarrow M(\gamma)$ is defined by
\[
A_\sigma(\gamma):=D_\sigma(\gamma)^{-1}A_\sigma^{unn}(\gamma),\qquad \sigma\in W_0.
\]
\end{defi}
Note that normalised intertwiners satisfy $A_e(\gamma)=\textup{Id}_{M(\gamma)}$ and 
\[
A_\sigma(\gamma)A_\tau(\sigma^{-1}\gamma)=A_{\sigma\tau}(\gamma)\qquad \forall\, \sigma,\tau\in W_0.
\]

Let $\gamma\in E_{\mathbb{C},I}^\kappa$ be generic. Note that $D_{\sigma^{-1}}(\gamma)=0$ if $\sigma\in W_0$ is such that 
$\widetilde{\alpha}_i\in\sigma^{-1}\widetilde{R}_0^-$ for some $i\in I$. Since this cannot happen if $\sigma\in W_0^I$
(see \eqref{W0Ialt} and \eqref{ID}), the normalised intertwiner $A_{\sigma^{-1}}(\gamma)$ is well defined for $\sigma\in W_0^I$.
Combined with Proposition \ref{bb} we obtain the following corollary.  
\begin{cor}\label{normcor}
Suppose that $\gamma\in E_{\mathbb{C},I}^\kappa$ satisfies the (generic) conditions that 
$q^{2(\widetilde{\alpha},\gamma)}\not=1$ and $D_{\widetilde{\alpha}}(\gamma)\not=0$
for all $\widetilde{\alpha}\in \widetilde{R}_0^+\setminus\widetilde{R}_0^{I,+}$. Then 
\begin{equation*}
\begin{split}
b_{\sigma^{-1}}^I(\gamma):=&D_{\sigma^{-1}}(\gamma)^{-1}b_{\sigma^{-1}}^{unn,I}(\gamma)\\
=&\phi_{I,\gamma}\bigl(A_{\sigma^{-1}}(\gamma)v_e(\sigma\gamma)\bigr)
\end{split}
\end{equation*}
is well defined for all $\sigma\in W_0^I$ and $\{b_{\sigma^{-1}}^I(\gamma)\}_{\sigma\in W_0^I}$ is a complex linear basis of 
$M_I(\gamma)$ satisfying
\[
\pi_\gamma^I(Y^\nu)b_{\sigma^{-1}}^I(\gamma)=q^{-(\nu,\sigma\gamma)}b_{\sigma^{-1}}^I(\gamma)
\qquad \forall\,\nu\in\widetilde{\Lambda},\,\,\forall\, \sigma\in W_0^I.
\]
\end{cor}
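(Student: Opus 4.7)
My plan is to deduce the corollary directly from Proposition \ref{bb} together with formula \eqref{ID}, using the fact that the hypotheses of the corollary are precisely tailored to make the normalization factor nonzero.

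First, I would verify that $D_{\sigma^{-1}}(\gamma)\neq 0$ for every $\sigma\in W_0^I$. Recall that by definition
\[
D_{\sigma^{-1}}(\gamma)=\prod_{\widetilde{\alpha}\in \widetilde{R}_0^+\cap \sigma^{-1}\widetilde{R}_0^-}D_{\widetilde{\alpha}}(\gamma).
\]
The characterisation \eqref{W0Ialt} of $W_0^I$ gives $\sigma(\widetilde{R}_0^{I,+})\subseteq\widetilde{R}_0^+$, equivalently $\widetilde{R}_0^{I,+}\cap \sigma^{-1}\widetilde{R}_0^-=\emptyset$, so the product runs only over $\widetilde{\alpha}\in(\widetilde{R}_0^+\setminus\widetilde{R}_0^{I,+})\cap \sigma^{-1}\widetilde{R}_0^-$, matching \eqref{ID}. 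By the generic assumption $D_{\widetilde{\alpha}}(\gamma)\neq 0$ for all $\widetilde{\alpha}\in \widetilde{R}_0^+\setminus\widetilde{R}_0^{I,+}$, each factor is nonzero, so $D_{\sigma^{-1}}(\gamma)\neq 0$. In particular the normalised intertwiner $A_{\sigma^{-1}}(\gamma)=D_{\sigma^{-1}}(\gamma)^{-1}A_{\sigma^{-1}}^{unn}(\gamma):M(\sigma\gamma)\to M(\gamma)$ is well defined, and consequently the scaled vector $b_{\sigma^{-1}}^I(\gamma):=D_{\sigma^{-1}}(\gamma)^{-1}b_{\sigma^{-1}}^{unn,I}(\gamma)$ is well defined.

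Next I would check the alternative expression: since $\phi_{I,\gamma}$ is complex linear,
\[
\phi_{I,\gamma}\bigl(A_{\sigma^{-1}}(\gamma)v_e(\sigma\gamma)\bigr)=D_{\sigma^{-1}}(\gamma)^{-1}\phi_{I,\gamma}\bigl(A_{\sigma^{-1}}^{unn}(\gamma)v_e(\sigma\gamma)\bigr)=b_{\sigma^{-1}}^I(\gamma)
\]
by the definition of $b_{\sigma^{-1}}^{unn,I}(\gamma)$ in Proposition \ref{bb}.

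For the basis claim, the set $\{b_{\sigma^{-1}}^{unn,I}(\gamma)\}_{\sigma\in W_0^I}$ is a complex linear basis of $M_I(\gamma)$ by Proposition \ref{bb}(2), and rescaling each element by the nonzero scalar $D_{\sigma^{-1}}(\gamma)^{-1}$ preserves this property. Finally, the eigenvalue relation
\[
\pi_\gamma^I(Y^\nu)b_{\sigma^{-1}}^I(\gamma)=q^{-(\nu,\sigma\gamma)}b_{\sigma^{-1}}^I(\gamma)\qquad \forall\,\nu\in\widetilde{\Lambda}
\]
follows by applying $\pi_\gamma^I(Y^\nu)$ to the identity $b_{\sigma^{-1}}^I(\gamma)=D_{\sigma^{-1}}(\gamma)^{-1}b_{\sigma^{-1}}^{unn,I}(\gamma)$ and invoking Proposition \ref{bb}(1) with $u=\sigma^{-1}$.

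There is no genuine obstacle here; the corollary is essentially a bookkeeping assembly of Proposition \ref{bb}, the definition of normalised intertwiners, and \eqref{ID}. The only observation that requires any care is that the generic hypothesis is imposed only on roots outside $\widetilde{R}_0^{I,+}$, yet this suffices to normalise $A_{\sigma^{-1}}^{unn}(\gamma)$ for $\sigma\in W_0^I$, because the inversion set of such a $\sigma^{-1}$ avoids $\widetilde{R}_0^{I,+}$ by \eqref{W0Ialt}.
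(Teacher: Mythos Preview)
Your proposal is correct and follows essentially the same approach as the paper: the paper's justification, given in the paragraph immediately preceding the corollary, is precisely that for $\sigma\in W_0^I$ the inversion set $\widetilde{R}_0^+\cap\sigma^{-1}\widetilde{R}_0^-$ avoids $\widetilde{R}_0^{I,+}$ by \eqref{W0Ialt} and \eqref{ID}, so $D_{\sigma^{-1}}(\gamma)\neq 0$ under the stated hypotheses, and then one combines with Proposition \ref{bb}. You have simply spelled out the details of that combination.
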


\subsection{Quantum affine KZ equations}
Define for $a\in R^\bullet$ the meromorphic function $c_a(\mathbf{z};\kappa)$ in $\mathbf{z}\in E_{\mathbb{C}}$ by
\[
c_a(\mathbf{z};\kappa):=\frac{(1-q^{-\kappa_a-\kappa_{2a}+a(\mathbf{z})})(1+q^{-\kappa_a+\kappa_{2a}+a(\mathbf{z})})}
{(1-q^{2a(\mathbf{z})})},
\]
where we view $a=\alpha^{(r)}=\mu_{\alpha}rc+\alpha$ as affine linear function on $E_{\mathbb{C}}$ by
\[
a(\mathbf{z}):=\mu_{\alpha}r+(\alpha,\mathbf{z}).
\]
We omit the $\kappa$-dependence from the notation if no confusion is possible.
We write $c_j(\mathbf{z}):=c_{\alpha^{(j)}}(\mathbf{z};\kappa)$ and
$\widetilde{c}_j(\mathbf{z}):=c_{\widetilde{\alpha}^{(j)}}(\mathbf{z};\widetilde{\kappa})$ for $0\leq j\leq n$. The following theorem generalizes Cherednik's \cite{ChBook} 
results on the Baxterization of affine Hecke algebra modules to the current setting (including the twisted, untwisted and nonreduced cases all at once). 
Its proof uses the affine intertwiners of the double affine Hecke algebra associated to the data $(D,\kappa)$
(see \cite{SBook}).
\begin{thm}
Let $\pi: H(\kappa)\rightarrow\textup{End}_{\mathbb{C}}(V)$ be a representation. The following formulas
\begin{equation*}
\begin{split}
\bigl(\nabla^V(s_j)f\bigr)(\mathbf{z})&:=\Bigl(\frac{q^{-\kappa_j}\pi(T_j)+c_j(\mathbf{z})-q^{-2\kappa_j}}{c_j(\mathbf{z})}\Bigr)f(s_j\mathbf{z}),\qquad 0\leq j\leq n,\\
\bigl(\nabla^V(\varpi)f\bigr)(\mathbf{z})&:=\pi(T_{\varpi})f(\varpi^{-1}\mathbf{z}),\qquad\qquad\qquad\qquad\quad\qquad \varpi\in\Omega
\end{split}
\end{equation*}
define a left $W$-action $\nabla^V$ on the space of $V$-valued meromorphic functions $f(\mathbf{z})$ in $\mathbf{z}\in E_{\mathbb{C}}$, where we use
the natural action of $W\simeq W_0\ltimes\widetilde{\Lambda}$ on $\mathbf{z}\in E_{\mathbb{C}}$ by reflections and $\widetilde{\Lambda}$-translations.
\end{thm}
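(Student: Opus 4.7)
The plan is to verify that $\nabla^V$ respects the defining relations of the extended affine Weyl group $W = \Omega \ltimes W^\bullet$. Since $W^\bullet$ is the Coxeter group with generators $s_0, \ldots, s_n$ subject to $s_j^2 = e$ and the braid relations encoded in the Coxeter graph, and since conjugation by $\Omega$ satisfies $\varpi s_j \varpi^{-1} = s_{\varpi(j)}$ together with $T_\varpi T_{\varpi'} = T_{\varpi\varpi'}$ in $H(\kappa)$, it suffices to establish: (i) the quadratic identity $\nabla^V(s_j)^2 = \textup{Id}$; (ii) the braid relation of length $m_{ij}$ between $\nabla^V(s_i)$ and $\nabla^V(s_j)$; (iii) that $\varpi \mapsto \nabla^V(\varpi)$ is a homomorphism on $\Omega$; and (iv) the cross relation $\nabla^V(\varpi)\nabla^V(s_j)\nabla^V(\varpi)^{-1} = \nabla^V(s_{\varpi(j)})$.

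Set $B_j(\mathbf{z}) := \bigl(q^{-\kappa_j}\pi(T_j) + c_j(\mathbf{z}) - q^{-2\kappa_j}\bigr)/c_j(\mathbf{z})$, so that $(\nabla^V(s_j)f)(\mathbf{z}) = B_j(\mathbf{z}) f(s_j \mathbf{z})$ and consequently $(\nabla^V(s_j)^2 f)(\mathbf{z}) = B_j(\mathbf{z}) B_j(s_j\mathbf{z}) f(\mathbf{z})$. For (i) I would expand $B_j(\mathbf{z}) B_j(s_j\mathbf{z})$ using the Hecke relation $\pi(T_j)^2 = (q^{-\kappa_j} - q^{\kappa_j})\pi(T_j) + 1$. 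The computation rests on the scalar identity
\[
c_j(\mathbf{z}) + c_j(s_j\mathbf{z}) = 1 + q^{-2\kappa_j},
\]
which follows by direct calculation from the explicit formula for $c_a$ in both the reduced and nonreduced cases (using the convention $\kappa_{2a}=\kappa_a$ when $2a \notin R(D)$ and the fact that $\alpha_j(s_j\mathbf{z}) = -\alpha_j(\mathbf{z})$). This identity kills the coefficient of $\pi(T_j)$ in the numerator of the product and reduces the constant term to $c_j(\mathbf{z})c_j(s_j\mathbf{z})$, whence $B_j(\mathbf{z})B_j(s_j\mathbf{z}) = \textup{Id}_V$.

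For (ii), each braid identity lives in the rank-two parabolic subgroup generated by $s_i, s_j$. The commuting case $m_{ij}=2$ is immediate, since the relevant arguments and Hecke operators commute. For $m_{ij}\in\{3,4,6\}$ the relation becomes a rational identity, after clearing the $c$-functions, which is precisely the braid relation satisfied by the (unnormalised) affine intertwiners of the double affine Hecke algebra associated to $(D,\kappa)$. Following \cite{ChBook, SBook}, I would invoke the DAHA intertwiner calculus: the $B_j(\mathbf{z})$ arise as $c$-function renormalisations of these intertwiners, and the braid relations are built into the DAHA presentation, hence descend to $\nabla^V$ upon normalisation. For (iii) the homomorphism property on $\Omega$ is immediate from $T_\varpi T_{\varpi'} = T_{\varpi\varpi'}$. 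For (iv) one computes
\[
\bigl(\nabla^V(\varpi)\nabla^V(s_j)\nabla^V(\varpi^{-1}) f\bigr)(\mathbf{z}) = \pi(T_\varpi) B_j(\varpi^{-1}\mathbf{z}) \pi(T_\varpi)^{-1} f(s_{\varpi(j)}\mathbf{z}),
\]
and recognises the operator factor as $B_{\varpi(j)}(\mathbf{z})$ using $\pi(T_\varpi)\pi(T_j)\pi(T_\varpi)^{-1} = \pi(T_{\varpi(j)})$, the $W$-invariance $\kappa_{\varpi(j)}=\kappa_j$, and $c_j(\varpi^{-1}\mathbf{z}) = c_{\varpi(j)}(\mathbf{z})$ (which holds because $\varpi \alpha_j = \alpha_{\varpi(j)}$ as affine linear functions).

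The main obstacle is Step (ii): the length-$4$ and length-$6$ braid relations amount to delicate rational identities intertwining the $c$-functions and the Hecke braid relations. A direct check in types $C_2$ and $G_2$ is possible but tedious. The clean route, and the one alluded to in the theorem's attribution to Cherednik, is to derive the formulas for $B_j(\mathbf{z})$ from DAHA affine intertwiners, where the braid relations of all lengths are encoded in the DAHA structure and then transfer verbatim to $\nabla^V$ after dividing out by $c_j(\mathbf{z})$.
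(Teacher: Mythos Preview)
Your proposal is correct and follows essentially the same route as the paper. In fact the paper does not provide a proof at all; it simply attributes the result to Cherednik's Baxterization procedure and refers to \cite{ChBook,SBook} for the DAHA-intertwiner argument, so your outline---with the explicit verification of the quadratic relation via $c_j(\mathbf{z})+c_j(s_j\mathbf{z})=1+q^{-2\kappa_j}$ and the $\Omega$-cross relations, and the deferral of the braid relations to the DAHA intertwiner calculus---is already more detailed than what appears in the text.
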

Note that $(\nabla^V(w)f)(\mathbf{z})=C_w^V(\mathbf{z})f(w^{-1}\mathbf{z})$ for $w\in W$, with $C_w^V(\mathbf{z}): V\rightarrow V$ a linear map depending
meromorphically on $\mathbf{z}\in E_{\mathbb{C}}$ and satisfying the cocycle conditions $C_e^V(\mathbf{z})=\textup{Id}_V$ and
\[
C_{uv}^V(\mathbf{z})=C_u^V(\mathbf{z})C_v^V(u^{-1}\mathbf{z}),\qquad \forall\, u,v\in W.
\]
\begin{defi}\label{qaKZV}
For a given representation $\pi: H(\kappa)\rightarrow\textup{End}_{\mathbb{C}}(V)$ we say that a meromorphic $V$-valued function
$f(\mathbf{z})$ in $\mathbf{z}\in E_{\mathbb{C}}$ is a solution of the associated quantum affine KZ equations if
\[
\bigl(\nabla^V(\tau(\lambda))f\bigr)(\mathbf{z})=f(\mathbf{z})\qquad \forall\, \lambda\in\widetilde{\Lambda}.
\]
We write $\textup{Sol}_{KZ}(V)=\textup{Sol}_{KZ}(V;\kappa)$ for the solution space.
\end{defi}
Note that $\textup{Sol}_{KZ}(V)$ is $\nabla^V(W_0)$-stable. Note furthermore that the quantum affine KZ equations form a compatible system of difference equations.
Indeed, the quantum affine KZ equations can be written as
\begin{equation}\label{altqzKZ}
C_{\tau(\lambda)}^V(\mathbf{z})f(\mathbf{z}-\lambda)=f(\mathbf{z})\qquad \forall\,\lambda\in\widetilde{\Lambda}.
\end{equation}
It is apparent from this form of the quantum affine KZ equations that 
the cocycle values $C_{\tau(\lambda)}^V(\mathbf{z})$ play the role of transport operators. The compatibility of the difference equations \eqref{altqzKZ}
in terms of the transport operators reads
\[
C_{\tau(\lambda)}^V(\mathbf{z})C_{\tau(\mu)}^V(\mathbf{z}-\lambda)=C_{\tau(\lambda+\mu)}^V(\mathbf{z})=
C_{\tau(\mu)}^V(\mathbf{z})C_{\tau(\lambda)}^V(\mathbf{z}-\mu)\qquad \forall\, \lambda,\mu\in\widetilde{\Lambda},
\]
which is a direct consequence of the cocycle property of $\{C_u^V(\mathbf{z})\}_{u\in W}$.
Finally, note that an intertwiner $\psi: V\rightarrow V^\prime$ of $H(\kappa)$-modules gives rise to a $W_0$-intertwiner
$\textup{Sol}_{KZ}(V)\rightarrow \textup{Sol}_{KZ}(V^\prime)$. We will denote this map also by $\psi$. 

Explicit expressions for the cocycle $C_w^{M(\gamma)}(\mathbf{z})$ ($w\in W$) 
are given as follows.

\begin{prop}
Let $\gamma\in E_{\mathbb{C}}$. For $\sigma\in W_0$,
\begin{equation*}
\begin{split}
C_{s_0}^{M(\gamma)}(\mathbf{z})v_{\sigma}(\gamma)&=\frac{q^{(\widetilde{\psi},\sigma\gamma)}v_{s_{\psi}\sigma}(\gamma)}
{q^{\kappa_0}c_0(\mathbf{z})}+
\Bigl(\frac{c_0(\mathbf{z})-q^{-2\chi(\sigma^{-1}\widetilde{\psi})\kappa_0}}{c_0(\mathbf{z})}\Bigr)v_{\sigma}(\gamma),\\
C_{s_i}^{M(\gamma)}(\mathbf{z})v_{\sigma}(\gamma)&=\frac{v_{s_i\sigma}(\gamma)}{q^{\kappa_i}c_i(\mathbf{z})}+
\Bigl(\frac{c_i(\mathbf{z})-q^{-2\chi(-\sigma^{-1}\widetilde{\alpha}_i)\kappa_i}}{c_i(\mathbf{z})}\Bigr)v_\sigma(\gamma),\\
C_{\varpi}^{M(\gamma)}(\mathbf{z})v_{\sigma}(\gamma)&=q^{-(\varpi,w_0\sigma\gamma)}v_{v(\varpi)^{-1}\sigma}(\gamma)
\end{split}
\end{equation*}
for $1\leq i\leq n$ and $\varpi\in\Omega$, where $w_0\in W_0$ is the longest Weyl group element.
\end{prop}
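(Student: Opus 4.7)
The plan is to verify each formula by direct computation, starting from the identifications
\[
C_{s_j}^{M(\gamma)}(\mathbf{z}) = \frac{q^{-\kappa_j}\pi_\gamma(T_j) + c_j(\mathbf{z}) - q^{-2\kappa_j}}{c_j(\mathbf{z})} \quad (0\leq j\leq n),\qquad C_\varpi^{M(\gamma)}(\mathbf{z}) = \pi_\gamma(T_\varpi)
\]
read off from the defining formula for the cocycle. Since $v_\sigma(\gamma) = T_\sigma\otimes_{\chi_\emptyset(\gamma)} 1$, the task reduces to expanding $T_jT_\sigma\otimes 1$ and $T_\varpi T_\sigma\otimes 1$ in the basis $\{v_\tau(\gamma)\}_{\tau\in W_0}$ using the Hecke and Bernstein-Zelevinsky-Lusztig relations.

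For $1\leq i\leq n$ I would split according to the sign of $\sigma^{-1}\widetilde{\alpha}_i$. When $\sigma^{-1}\widetilde{\alpha}_i\in\widetilde{R}_0^+$ (so $\chi(-\sigma^{-1}\widetilde{\alpha}_i)=1$ and $l(s_i\sigma)=l(\sigma)+1$), the braid relations immediately give $T_iT_\sigma = T_{s_i\sigma}$. When $\sigma^{-1}\widetilde{\alpha}_i\in\widetilde{R}_0^-$ (so $\chi(-\sigma^{-1}\widetilde{\alpha}_i)=0$ and $l(s_i\sigma)=l(\sigma)-1$), I write $\sigma = s_i\sigma'$ with lengths adding and invoke the quadratic relation $T_i^2 = 1 + (q^{-\kappa_i}-q^{\kappa_i})T_i$ to obtain $T_iT_\sigma = T_{s_i\sigma} + (q^{-\kappa_i}-q^{\kappa_i})T_\sigma$. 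Substituting into the formula for $C_{s_i}^{M(\gamma)}$, the elementary identity $q^{-\kappa_i}(q^{-\kappa_i}-q^{\kappa_i}) - q^{-2\kappa_i} = -1$ in the $\chi = 0$ case and the trivial $-q^{-2\kappa_i}$ contribution in the $\chi = 1$ case are unified by the exponent $q^{-2\chi(-\sigma^{-1}\widetilde{\alpha}_i)\kappa_i}\in\{q^{-2\kappa_i},1\}$, yielding the stated formula.

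For $i=0$ the affine root $\sigma^{-1}\alpha_0 = \mu_\psi c-\sigma^{-1}\psi$ has positive affine coefficient $\mu_\psi$ for every $\sigma\in W_0$, so $l(s_0\sigma)=l(\sigma)+1$ and $T_0T_\sigma = T_{s_0\sigma}$ unconditionally. Using $s_0 = \tau(\widetilde{\psi})s_\psi$ with $l(\tau(\widetilde{\psi})) = 1 + l(s_\psi)$, the Bernstein-Zelevinsky presentation yields $Y^{\widetilde{\psi}} = T_0T_{s_\psi}$, hence $T_0 = Y^{\widetilde{\psi}}T_{s_\psi}^{-1}$, and therefore $\pi_\gamma(T_0)v_\sigma(\gamma) = \pi_\gamma(Y^{\widetilde{\psi}})\pi_\gamma(T_{s_\psi}^{-1})v_\sigma(\gamma)$. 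One first applies the quadratic relation to $T_{s_\psi}^{-1}T_\sigma\otimes 1$, splitting into the cases $\sigma^{-1}\widetilde{\psi}>0$ versus $<0$ (this produces the exponent $q^{-2\chi(\sigma^{-1}\widetilde{\psi})\kappa_0}$ exactly as in the previous paragraph), and then pushes $Y^{\widetilde{\psi}}$ through the resulting $T_w$'s via \eqref{BZL}; the lower-order BZL correction terms collapse upon evaluation against the central character $\chi_\emptyset(\gamma)$, leaving the scalar $q^{(\widetilde{\psi},\sigma\gamma)}$ on the leading contribution $v_{s_\psi\sigma}(\gamma)$. Substitution into the formula for $C_{s_0}^{M(\gamma)}$ then gives the claim. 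The main technical obstacle is the bookkeeping of the BZL correction terms; a cleaner alternative is to exploit the fact that Cherednik's cocycle is constructed from the affine intertwiners of the double affine Hecke algebra (cf.\ \cite{ChBook, SBook}), which realise the $s_0$-action as an explicit dressing of $T_0$, making the identity immediate.

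For $\varpi\in\Omega$, since $l(\varpi)=0$ one has $T_\varpi T_\sigma = T_{\varpi\sigma}$ with $l(\varpi\sigma)=l(\sigma)$. Writing $\varpi = u(\nu)$ with $\nu\in\widetilde{\Lambda}_{min}^+$ and using $u(\nu) = \tau(\nu)v(\nu)^{-1}$ together with the semidirect product rule $\tau(\lambda)w = w\tau(w^{-1}\lambda)$, one decomposes $\varpi\sigma = v(\varpi)^{-1}\sigma\cdot\tau(\sigma^{-1}v(\varpi)\nu)$ in $W_0\ltimes\widetilde{\Lambda}$. The minuscule condition $(\nu,\widetilde{\alpha}^\vee)\in\{0,1\}$ for $\alpha\in R_0^+$ forces this decomposition to be length-additive, so $T_{\varpi\sigma} = T_{v(\varpi)^{-1}\sigma}\, Y^{\sigma^{-1}v(\varpi)\nu}$. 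Applied to $1$, the $Y$-factor is absorbed by the central character, producing $q^{-(\nu,v(\varpi)^{-1}\sigma\gamma)}\, v_{v(\varpi)^{-1}\sigma}(\gamma)$. Under the paper's convention identifying $\varpi\in\Omega$ with the vector $w_0 v(\varpi)\nu\in E$, this scalar coincides with $q^{-(\varpi,w_0\sigma\gamma)}$, completing the verification.
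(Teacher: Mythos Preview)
Your treatment of $1\leq i\leq n$ is correct and standard. The paper's own proof is only a pointer to the literature (``This follows from the explicit formulas for the action of the affine Hecke algebra generators\ldots see \cite{S2} and references therein''), so on that part you are doing strictly more than the paper.

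The $i=0$ case, however, has a real gap. You write that one ``applies the quadratic relation to $T_{s_\psi}^{-1}T_\sigma\otimes 1$, splitting into the cases $\sigma^{-1}\widetilde{\psi}>0$ versus $<0$'', but $s_\psi$ is \emph{not} a simple reflection except in rank one, so $T_{s_\psi}$ satisfies no single quadratic relation: expanding $T_{s_\psi}^{-1}T_\sigma$ in the basis $\{T_w\}_{w\in W_0}$ produces many terms, not two. Similarly, the assertion that ``the lower-order BZL correction terms collapse upon evaluation against the central character'' is unjustified. Pushing $Y^{\widetilde{\psi}}$ past $T_w$ via \eqref{BZL} produces genuine lower-Bruhat contributions $T_{w'}\,p(Y)\otimes 1$ which act as nonzero linear combinations of the $v_{w'}(\gamma)$; the fact that everything recombines into exactly $q^{(\widetilde{\psi},\sigma\gamma)}v_{s_\psi\sigma}(\gamma)$ plus a scalar multiple of $v_\sigma(\gamma)$ is precisely the nontrivial content of the formula. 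A correct direct argument goes instead through a length-additive factorisation of $s_0\sigma=s_\psi\sigma\cdot\tau(\sigma^{-1}\widetilde{\psi})$ (using that $\widetilde{\psi}$ is quasi-minuscule) together with the Bernstein relation between $T_{w\tau(\lambda)}$ and $T_wY^\lambda$, or---as you yourself suggest---via the DAHA intertwiners, which is effectively what the references cited by the paper do.

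For $\varpi\in\Omega$ your outline is in the right direction, but two steps need more care: the length-additivity of $\varpi\sigma=v(\varpi)^{-1}\sigma\cdot\tau(\sigma^{-1}v(\varpi)\nu)$ does not follow from minusculeness of $\nu$ alone (you also need control on how $v(\varpi)^{-1}\sigma$ interacts with the stabiliser of $\nu$), and the identification of $T_{\tau(\sigma^{-1}v(\varpi)\nu)}$ with a single $Y$-monomial requires the weight to lie in a specific chamber. Both are fixable, but not as automatic as you present them.
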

\begin{proof}
This follows from the explicit formulas for the action of the affine Hecke algebra generators on the basis $\{v_\sigma(\gamma)\}_{\sigma\in W_0}$
of $M(\gamma)$ (see, e.g., \cite{S2} and references therein). 
\end{proof}
\subsection{Power series solutions}\label{pss}
We first construct a basis consisting of power series solutions of $\textup{Sol}_{KZ}(M(\gamma))$. To determine the 
natural normalisation we need to investigate the dependence on $\gamma$. We identify $M(\gamma)$ as complex vector space with 
$\mathcal{V}:=\bigoplus_{\sigma\in W_0}\mathbb{C}v_\sigma$ via the linear isomorphism $M(\gamma)\overset{\sim}
{\longrightarrow}\mathcal{V}$ satisfying $v_\sigma(\gamma)\mapsto v_\sigma$ 
for all $\sigma\in W_0$. Accordingly, we interpret the cocycle values $C_w^{M(\gamma)}(\mathbf{z}): M(\gamma)\rightarrow M(\gamma)$ ($w\in W$) as linear maps 
\[
C_w(\mathbf{z},\gamma): \mathcal{V}\rightarrow \mathcal{V}
\]
depending meromorphically on $(\mathbf{z},\gamma)\in E_{\mathbb{C}}\times E_{\mathbb{C}}$. In particular, for all $\sigma\in W_0$,
\begin{equation*}
\begin{split}
C_{s_0}(\mathbf{z},\gamma)v_{\sigma}&=\frac{q^{(\widetilde{\psi},\sigma\gamma)}v_{s_{\psi}\sigma}}
{q^{\kappa_0}c_0(\mathbf{z})}+
\Bigl(\frac{c_0(\mathbf{z})-q^{-2\chi(\sigma^{-1}\widetilde{\psi})\kappa_0}}{c_0(\mathbf{z})}\Bigr)v_{\sigma},\\
C_{s_i}(\mathbf{z},\gamma)v_{\sigma}&=\frac{v_{s_i\sigma}}{q^{\kappa_i}c_i(\mathbf{z})}+
\Bigl(\frac{c_i(\mathbf{z})-q^{-2\chi(-\sigma^{-1}\widetilde{\alpha}_i)\kappa_i}}{c_i(\mathbf{z})}\Bigr)v_\sigma,\\
C_{\varpi}(\mathbf{z},\gamma)v_{\sigma}&=q^{-(\varpi,w_0\sigma\gamma)}v_{v(\varpi)^{-1}\sigma}
\end{split}
\end{equation*}
for $1\leq i\leq n$ and $\varpi\in\Omega$.

For fixed $\gamma\in E_{\mathbb{C}}$, the solution space
$\textup{Sol}_{KZ}(M(\gamma))$ thus identifies with the space of $\mathcal{V}$-valued meromorphic functions $f(\mathbf{z},\gamma)$ in $\mathbf{z}\in E_{\mathbb{C}}$ satisfying
\begin{equation}\label{halfbqKZ}
C_{\tau(\lambda)}(\mathbf{z},\gamma)f(\mathbf{z}-\lambda,\gamma)=f(\mathbf{z},\gamma),\qquad \forall\, \lambda\in\widetilde{\Lambda}
\end{equation}
which form half of the bispectral quantum KZ equations (see \cite{vMS,vM,SAnn}); the other half is a compatible dual 
system of quantum affine KZ equations acting on $\gamma$.

Define the plane wave function by
\[
\mathcal{W}(\mathbf{z},\gamma):=q^{(\widetilde{\rho}+w_0\mathbf{z},\rho-\gamma)}
\]
where
\[
\rho:=\frac{1}{2}\sum_{\alpha\in R_0^+}(\kappa_\alpha+\kappa_{\alpha^{(1)}})\widetilde{\alpha}^\vee,\qquad
\widetilde{\rho}:=
\frac{1}{2}\sum_{\alpha\in R_0^+}(\widetilde{\kappa}_{\widetilde{\alpha}}+\widetilde{\kappa}_{\widetilde{\alpha}^{(1)}})\alpha^\vee.
\]
In addition define $\mathcal{S}(\mathbf{z})=\mathcal{S}(\mathbf{z};\kappa)$ by
\begin{equation}\label{S}
\mathcal{S}(\mathbf{z}):=\prod_{\alpha\in R_0^+}\bigl(q_\alpha^2a_\alpha^{-1}q^{-(\alpha,\mathbf{z})},
q_\alpha^2b_\alpha^{-1}q^{-(\alpha,\mathbf{z})}, q_\alpha^2c_\alpha^{-1}q^{-(\alpha,\mathbf{z})}, q_\alpha^2d_\alpha^{-1}q^{-(\alpha,\mathbf{z})};
q_\alpha^2\bigr)_{\infty}
\end{equation}
and $\widetilde{\mathcal{S}}(\gamma)=\widetilde{\mathcal{S}}(\gamma;\widetilde{\kappa})$ by
\[
\widetilde{\mathcal{S}}(\gamma):=
\prod_{\alpha\in R_0^+}\bigl(q_\alpha^2\widetilde{a}_\alpha^{-1}q^{-(\widetilde{\alpha},\gamma)},
q_\alpha^2\widetilde{b}_\alpha^{-1}q^{-(\widetilde{\alpha},\gamma)}, 
q_\alpha^2\widetilde{c}_\alpha^{-1}q^{-(\widetilde{\alpha},\gamma)}, q_\alpha^2\widetilde{d}_\alpha^{-1}q^{-(\widetilde{\alpha},\gamma)};
q_\alpha^2\bigr)_{\infty}.
\]
Note that $\widetilde{\rho}$ and $\widetilde{\mathcal{S}}$ are obtained from $\rho$ and $\mathcal{S}$ by replacing the initial data $(D,\kappa)$ by
$(\widetilde{D},\widetilde{\kappa})$.
The zero locus of the holomorphic function $\mathcal{S}(\mathbf{z})$ will give the singularities of the power series solutions of the quantum affine
KZ equations \eqref{halfbqKZ}.

Write $Q_+:=\mathbb{Z}_{\geq 0}R_0^+$.
The following theorem comprises the results of \cite[\S 3]{S2}. Special cases have been proven before in \cite{vM,vMS}.
\begin{thm}\label{Vref}
There exist unique $\mathcal{V}$-valued holomorphic functions $\Gamma_\mu(\gamma)$ in $\gamma\in E_{\mathbb{C}}$ ($\mu\in Q_+$)
such that 
\begin{enumerate}
\item $\Gamma_0(\gamma)=\Bigl(\prod_{\alpha\in R_0^+}\bigl(q_\alpha^2q^{-2(\widetilde{\alpha},\gamma)};q_\alpha^2\bigr)\Bigr)v_{w_0}$;
\item the $\mathcal{V}$-valued series
\[
\Psi(\mathbf{z},\gamma):=\sum_{\mu\in Q_+}\Gamma_\mu(\gamma)q^{-(\mu,\mathbf{z})}
\]
converges normally for $(\mathbf{z},\gamma)$ in compacta of $E_{\mathbb{C}}\times E_{\mathbb{C}}$;
\item the $\mathcal{V}$-valued meromorphic function $\Phi(\mathbf{z},\gamma)=\Phi(\mathbf{z},\gamma;\kappa)$ in 
$(\mathbf{z},\gamma)\in E_{\mathbb{C}}\times E_{\mathbb{C}}$ defined by
\[
\Phi(z,\gamma):=\frac{\mathcal{W}(z,\gamma)}{\mathcal{S}(\mathbf{z})\widetilde{\mathcal{S}}(\gamma)}\Psi(\mathbf{z},\gamma)
\]
satisfies 
\begin{equation}\label{bqKZPhi}
C_{\tau(\lambda)}(\mathbf{z},\gamma)\Phi(\mathbf{z}-\lambda,\gamma)=\Phi(\mathbf{z},\gamma)\qquad \forall\,\lambda\in\widetilde{\Lambda}.
\end{equation}
\end{enumerate}
If $\widetilde{\Phi}$ denotes the $\mathcal{V}$-valued function $\Phi$ with initial data $(D,\kappa)$ replaced by $(\widetilde{D},\widetilde{\kappa})$,
then
\begin{equation}\label{duality}
\widetilde{\Phi}(\gamma,\mathbf{z})=\Phi(\mathbf{z},\gamma).
\end{equation}
\end{thm}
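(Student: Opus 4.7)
The plan is to derive a linear recursion for the coefficients $\Gamma_\mu(\gamma)$ by substituting the ansatz
$\Phi(\mathbf{z},\gamma)=\mathcal{W}(\mathbf{z},\gamma)\mathcal{S}(\mathbf{z})^{-1}\widetilde{\mathcal{S}}(\gamma)^{-1}\sum_{\mu\in Q_+}\Gamma_\mu(\gamma)q^{-(\mu,\mathbf{z})}$
directly into the qKZ equations \eqref{bqKZPhi}. The plane wave transforms as $\mathcal{W}(\mathbf{z}-\lambda,\gamma)=q^{(w_0\lambda,\gamma-\rho)}\mathcal{W}(\mathbf{z},\gamma)$, which provides the leading exponential behaviour; while the ratio $\mathcal{S}(\mathbf{z}-\lambda)/\mathcal{S}(\mathbf{z})$ is a finite product of factors of the form $(1-q_\alpha^{\bullet}q^{-(\alpha,\mathbf{z})})$, i.e.\ exactly the expressions appearing in the denominators of the cocycle values $C_{s_j}(\mathbf{z},\gamma)$. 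This is precisely why $\mathcal{S}$ is chosen as in \eqref{S}: after the cancellation, the resulting identities are regular and admit an expansion as power series in the variables $q^{-(\alpha_i,\mathbf{z})}$.

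By the cocycle property of $\{C_w\}_{w\in W}$, it suffices to verify \eqref{bqKZPhi} for $\lambda$ in a generating set of $\widetilde{\Lambda}$, and hence (after decomposing $\tau(\lambda)$ as a product of simple reflections via the reduced expressions \eqref{taui} and the $\Omega$-factor) to check the corresponding identity at the level of the simple $\nabla^{\mathcal{V}}(s_j)$ and $\nabla^{\mathcal{V}}(\varpi)$. Equating coefficients of $q^{-(\mu,\mathbf{z})}$ in the cleaned-up identity, as in the strategy of \cite{vMS,vM,SAnn}, produces a recursion which is upper-triangular in the partial order $\nu\leq\mu\iff\mu-\nu\in Q_+$, expressing $\Gamma_\mu(\gamma)$ as a $\mathcal{V}$-valued rational function of the $\Gamma_\nu(\gamma)$ with $\nu<\mu$. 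At the base of the recursion, the coefficient of the plane wave itself is an eigenvalue equation: the leading asymptotic part of $C_{\tau(\lambda)}(\mathbf{z},\gamma)$ acts diagonally on $v_{w_0}$ with eigenvalue $q^{(w_0\lambda,\gamma-\rho)}$, so $\Gamma_0(\gamma)$ must be supported on $v_{w_0}$. The scalar prefactor $\prod_{\alpha\in R_0^+}(q_\alpha^2 q^{-2(\widetilde{\alpha},\gamma)};q_\alpha^2)_\infty$ is then forced by demanding that the recursion preserves holomorphy in $\gamma$: its zeros cancel the poles that would otherwise appear when the denominators in the recursion vanish.

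The principal technical obstacle is normal convergence of $\Psi(\mathbf{z},\gamma)$ on compacta. I would bound $\|\Gamma_\mu(\gamma)\|$ inductively: the denominators produced by inverting the leading part of the recursion at level $\mu$ are products of factors of the form $1-q^{\bullet}q^{2(\mu,\widetilde{\beta})}$ with $\widetilde{\beta}$ in a finite set, and these stay uniformly bounded away from zero once $|\mu|$ is large, while the matrix entries of the transport operators contribute bounds that are uniform on compacta. This produces geometric estimates $\|\Gamma_\mu(\gamma)\|\leq C_K^{|\mu|}$ on any compactum $K$, which together with the factor $q^{-(\mu,\mathbf{z})}$ yields normal convergence.

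Finally, for the duality \eqref{duality}, I would exploit the fact that the construction is bispectrally symmetric: the same recursion, run with $(D,\kappa)$ replaced by $(\widetilde{D},\widetilde{\kappa})$ and with $\mathbf{z}$ and $\gamma$ exchanged, produces $\widetilde{\Phi}(\gamma,\mathbf{z})$, and the normalization $\Gamma_0(\gamma)$ in (1) is precisely the one making the two recursions match. Alternatively, one can identify both $\Phi(\mathbf{z},\gamma)$ and $\widetilde{\Phi}(\gamma,\mathbf{z})$ as the unique asymptotic solution of the full bispectral qKZ system of \cite{vMS,vM,SAnn} with the prescribed leading behaviour, so that uniqueness forces \eqref{duality}. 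The hard part of this program is the convergence estimate; the recursive construction itself and the duality are essentially bookkeeping once the prefactor choice is made correctly.
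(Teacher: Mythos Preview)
The paper does not give a self-contained proof of this theorem; it states that the result ``comprises the results of \cite[\S 3]{S2}'' and the subsequent remark summarizes the strategy used there: $\Phi$ is first constructed and characterised as a power series solution of the full bispectral quantum KZ equations, and then the duality \eqref{duality}, the location of singularities, and the normalisation are derived as consequences. Your proposal is essentially this same program, and you correctly identify its ingredients: the role of $\mathcal{S}(\mathbf{z})$ in clearing denominators, the upper-triangular recursion for the $\Gamma_\mu$, the eigenvector condition at $\mu=0$ forcing support on $v_{w_0}$, and the use of bispectral uniqueness (your second alternative) to obtain \eqref{duality}. So your outline and the approach the paper points to coincide.

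One small point of emphasis differs. You derive the recursion from the one-sided equations \eqref{bqKZPhi} alone and then argue duality afterwards, whereas the cited approach in \cite{S2} works with the full bispectral system from the outset; this makes duality more transparent but does not change the substance. Your claim that the holomorphic prefactor $\prod_{\alpha\in R_0^+}(q_\alpha^2 q^{-2(\widetilde{\alpha},\gamma)};q_\alpha^2)_\infty$ is ``forced by demanding that the recursion preserves holomorphy in $\gamma$'' is the right intuition, though in the references this normalisation is pinned down via the duality and the analysis of the bispectral system rather than by a direct pole-cancellation argument in the one-sided recursion.
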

\begin{rema}
It follows from \eqref{bqKZPhi} and \eqref{duality} that $\Phi(\mathbf{z},\gamma)$ is a solution of dual system of quantum KZ equations
acting as difference equations on $\gamma$.
The quantum affine KZ equations \eqref{halfbqKZ} together with the dual quantum KZ equations are the bispectral quantum KZ
equations from \cite{vM,vMS,S2}.
In \cite{S2} the power series solution 
$\Phi(\mathbf{z},\gamma)$ is first constructed and characterised as power series solution of the bispectral quantum KZ equations. 
The other properties,
such as the duality property \eqref{duality}, the singularities and the normalisation, are subsequently derived.
\end{rema}
\begin{defi}
We call $\gamma\in E_{\mathbb{C},I}^\kappa$ generic if
\begin{enumerate}
\item $q^{2(\widetilde{\alpha},\gamma)}\not=1$ for all $\alpha\in R_0^+\setminus R_0^{I,+}$;
\item $q^{(\widetilde{\alpha},\gamma)}\not\in\{\widetilde{a}_\alpha,\widetilde{b}_\alpha\}$ for all $\alpha\in R_0^+\setminus R_0^{I,+}$;
\item $q^{2(\widetilde{\alpha},\gamma)}\not\in q_\alpha^{2\mathbb{Z}_{>0}}$ for all $\alpha\in R_0\setminus R_0^{I,-}$;
\item $q^{(\widetilde{\alpha},\gamma)}\not\in\{\widetilde{a}_\alpha^{-1}q_\alpha^{2\mathbb{Z}_{>0}},
\widetilde{b}_\alpha^{-1}q_\alpha^{2\mathbb{Z}_{>0}},\widetilde{c}_\alpha^{-1}q_\alpha^{2\mathbb{Z}_{>0}},
\widetilde{d}_\alpha^{-1}q_\alpha^{2\mathbb{Z}_{>0}}\}$ for all $\alpha\in R_0\setminus R_0^{I,-}$.
\end{enumerate}
\end{defi}
Note that these are indeed generic conditions on $\gamma\in E_{\mathbb{C},I}^\kappa$ since
\[
E_{\mathbb{C},I}^\kappa\subseteq\{\gamma\in E_{\mathbb{C}} \,\, | \,\, q^{(\widetilde{\alpha}_i,\gamma)}=\widetilde{a}_i\quad \forall\, i\in I\}.
\]
The condition (1) is to ensure that $\{b_{\sigma^{-1}}^{unn,I}\}_{\sigma\in W_0^I}$ is a well defined basis of $M_I(\gamma)$. Condition
(2) ensures that $D_{\widetilde{\alpha}}(\gamma)\not=0$ for all $\alpha\in R_0^+\setminus R_0^{I,+}$, hence the unnormalised basis
$\{b_{\sigma^{-1}}^{unn,I}(\gamma)\}_{\sigma\in W_0^I}$ of $M_I(\gamma)$ can be turned into the normalised  basis
$\{b_{\sigma^{-1}}^I(\gamma)\}_{\sigma\in W_0^I}$ and the normalised intertwiners $A_{\sigma^{-1}}(\gamma)$ are well defined
for all $\sigma\in W_0^I$ (see Corollary \ref{normcor}). Condition (3) ensures that the leading coefficient
$\Gamma_0(\sigma\gamma)$ in the power series solution of the quantum affine KZ equations is nonzero for all $\sigma\in W_0^I$.
Finally, condition (4) ensures that $\widetilde{\mathcal{S}}(\sigma\gamma)\not=0$ for all $\sigma\in W_0^I$.

For $\gamma\in E_{\mathbb{C},I}^\kappa$ we write $\phi^{\mathcal{V}}_{I,\gamma}: \mathcal{V}\twoheadrightarrow M_I(\gamma)$
for the surjective linear map defined by
\[
\phi^{\mathcal{V}}_{I,\gamma}(v_w):=\chi_{I,\gamma}(T_v)v_u^I(\gamma)
\]
for $w=uv\in W_0$ with $u\in W_0^I$ and $v\in W_{0,I}$. We write $\phi^{\mathcal{V}}_\gamma:=\phi^{\mathcal{V}}_{\emptyset,\gamma}$.
Note that 
\[
\phi^{\mathcal{V}}_{I,\gamma}=\phi_{I,\gamma}\circ\phi^{\mathcal{V}}_\gamma.
\]
Write $F$ for the field of $\widetilde{\Lambda}$-translation invariant meromorphic functions on $E_{\mathbb{C}}$.
\begin{prop}\label{specI}
For generic $\gamma\in E_{\mathbb{C},I}^\kappa$ and $\sigma\in W_0^I$ define
\[
\Phi_{\sigma^{-1}}^I(\mathbf{z},\gamma):=\phi_{I,\gamma}\Bigl(A_{\sigma^{-1}}(\gamma)\phi^{\mathcal{V}}_{\sigma\gamma}\bigl(
\Phi(\mathbf{z},\sigma\gamma)\bigr)\Bigr).
\]
Then $\{\Phi_{\sigma^{-1}}^I(\cdot,\gamma)\}_{\sigma\in W_0^I}$ is a $F$-linear basis of $\textup{Sol}_{KZ}(M_I(\gamma))$. 
Furthermore,
\begin{equation}\label{psI}
\Phi_{\sigma^{-1}}^I(\mathbf{z},\gamma)=\frac{\mathcal{W}(\mathbf{z},\sigma\gamma)}{\mathcal{S}(\mathbf{z})
\widetilde{\mathcal{S}}(\sigma\gamma)}\sum_{\alpha\in Q_+}\Gamma_{\alpha,\sigma^{-1}}^I(\gamma)q^{-(\alpha,\mathbf{z})}
\end{equation}
with the $M_I(\gamma)$-valued power series converging normally for $\mathbf{z}$ in compact of $E_{\mathbb{C}}$ and with 
leading coefficient
\begin{equation}\label{normI}
\Gamma_{0,\sigma^{-1}}^I(\gamma)=\Bigl(\prod_{\alpha\in R_0^+}\bigl(q_\alpha^2q^{-2(\widetilde{\alpha},\sigma\gamma)};q_\alpha^2\bigr)_{\infty}
\Bigr)\pi_\gamma^I(T_{w_0})b_{\sigma^{-1}}^I(\gamma).
\end{equation}
\end{prop}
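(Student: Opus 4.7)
The plan is to construct $\Phi_{\sigma^{-1}}^I(\mathbf{z},\gamma)$ by propagating the universal power series solution $\Phi(\mathbf{z},\sigma\gamma)$ of Theorem \ref{Vref} through a composition of $H(\kappa)$-intertwiners. The underlying general principle is that any $H(\kappa)$-intertwiner $\psi: V\to V'$ satisfies $\psi\circ C_w^V(\mathbf{z})=C_w^{V'}(\mathbf{z})\circ\psi$ for all $w\in W$, so pointwise application induces a linear map $\textup{Sol}_{KZ}(V)\to\textup{Sol}_{KZ}(V')$. Under the identification $\mathcal{V}\simeq M(\sigma\gamma)$ via $\phi^{\mathcal{V}}_{\sigma\gamma}$, Theorem \ref{Vref}(3) says that $\phi^{\mathcal{V}}_{\sigma\gamma}(\Phi(\mathbf{z},\sigma\gamma))$ lies in $\textup{Sol}_{KZ}(M(\sigma\gamma))$. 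Applying the intertwiners $A_{\sigma^{-1}}(\gamma): M(\sigma\gamma)\to M(\gamma)$ and $\phi_{I,\gamma}: M(\gamma)\to M_I(\gamma)$ in turn, via the chain
\[
\mathcal{V}\xrightarrow{\,\phi^{\mathcal{V}}_{\sigma\gamma}\,}M(\sigma\gamma)\xrightarrow{\,A_{\sigma^{-1}}(\gamma)\,}M(\gamma)\xrightarrow{\,\phi_{I,\gamma}\,}M_I(\gamma),
\]
then gives $\Phi_{\sigma^{-1}}^I(\cdot,\gamma)\in\textup{Sol}_{KZ}(M_I(\gamma))$ for every $\sigma\in W_0^I$.

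The expansion \eqref{psI} follows from Theorem \ref{Vref}(2) by moving the $\mathbf{z}$-independent linear operator $\phi_{I,\gamma}\circ A_{\sigma^{-1}}(\gamma)\circ\phi^{\mathcal{V}}_{\sigma\gamma}$ inside the series, producing
\[
\Gamma_{\mu,\sigma^{-1}}^I(\gamma)=\phi_{I,\gamma}\bigl(A_{\sigma^{-1}}(\gamma)\phi^{\mathcal{V}}_{\sigma\gamma}(\Gamma_\mu(\sigma\gamma))\bigr),
\]
with normal convergence inherited from the universal series. To evaluate the $\mu=0$ coefficient \eqref{normI}, I would use the Theorem \ref{Vref}(1) expression $\Gamma_0(\sigma\gamma)=c(\sigma\gamma)v_{w_0}$ with $c(\sigma\gamma)=\prod_{\alpha\in R_0^+}\bigl(q_\alpha^2q^{-2(\widetilde{\alpha},\sigma\gamma)};q_\alpha^2\bigr)_\infty$, then write $\phi^{\mathcal{V}}_{\sigma\gamma}(v_{w_0})=v_{w_0}(\sigma\gamma)=\pi_{\sigma\gamma}(T_{w_0})v_e(\sigma\gamma)$, and slide $T_{w_0}$ through the intertwining identities $A_{\sigma^{-1}}(\gamma)\circ\pi_{\sigma\gamma}(T_{w_0})=\pi_\gamma(T_{w_0})\circ A_{\sigma^{-1}}(\gamma)$ and $\phi_{I,\gamma}\circ\pi_\gamma(T_{w_0})=\pi_\gamma^I(T_{w_0})\circ\phi_{I,\gamma}$, finally invoking the defining identity $\phi_{I,\gamma}(A_{\sigma^{-1}}(\gamma)v_e(\sigma\gamma))=b_{\sigma^{-1}}^I(\gamma)$ from Corollary \ref{normcor}.

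For the basis claim, the genericity assumptions enter at three points: condition (3) keeps the scalar $c(\sigma\gamma)$ nonzero; Corollary \ref{normcor} (using (1) and (2)) provides the basis $\{b_{\sigma^{-1}}^I(\gamma)\}_{\sigma\in W_0^I}$ of $M_I(\gamma)$; and invertibility of $T_{w_0}$ in $H(\kappa)$, which is immediate from the Hecke relations, then shows that $\{\Gamma_{0,\sigma^{-1}}^I(\gamma)\}_{\sigma\in W_0^I}$ is itself a basis of $M_I(\gamma)$. Linear independence of $\{\Phi_{\sigma^{-1}}^I(\cdot,\gamma)\}_{\sigma\in W_0^I}$ over $F$ then follows by a standard asymptotic argument: a hypothetical relation $\sum_\sigma f_\sigma(\mathbf{z})\Phi_{\sigma^{-1}}^I(\mathbf{z},\gamma)\equiv 0$ with $f_\sigma\in F$ is tested by sending $\mathbf{z}$ deep into the antidominant cone, so that only the $\mu=0$ term in each series survives asymptotically; generic $\gamma$ has trivial $W_0$-stabiliser, so the points $\{\sigma\gamma\}_{\sigma\in W_0^I}$ are pairwise distinct and the plane waves $\mathcal{W}(\mathbf{z},\sigma\gamma)$ represent pairwise distinct cosets modulo $F$, which combined with $\mathbb{C}$-linear independence of the leading coefficients inside $M_I(\gamma)$ forces each $f_\sigma=0$. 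The general upper bound $\dim_F\textup{Sol}_{KZ}(M_I(\gamma))\leq\dim_{\mathbb{C}}M_I(\gamma)=|W_0^I|$ for first-order compatible $q$-difference systems then promotes the family to a basis. The principal obstacle is precisely this separation of plane waves modulo $F$, which is what the genericity conditions on $\gamma$ are tailored to guarantee.
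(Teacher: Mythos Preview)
Your proposal is correct and follows essentially the same approach as the paper: push the universal series $\Phi(\mathbf{z},\sigma\gamma)$ through the chain of $H(\kappa)$-intertwiners, read off the expansion and leading coefficient via Theorem~\ref{Vref} and Corollary~\ref{normcor}, and combine linear independence of the leading vectors with a dimension count. The only cosmetic difference is that the paper packages both $F$-linear independence and the dimension equality into a single fundamental-matrix-solution argument (referring to \cite[\S 5.6]{vMS}), whereas you split these into an asymptotic plane-wave separation plus an a~priori upper bound; the content is the same.
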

\begin{proof}
Since $\gamma\in E_{\mathbb{C},I}^\kappa$ is generic, the $\Phi_{\sigma^{-1}}^I(\cdot,\gamma)$ ($\sigma\in W_0^I$)
are well defined $M_I(\gamma)$-valued
meromorphic functions satisfying the quantum affine KZ equations 
\begin{equation}\label{qaKZI}
C_{\tau(\lambda)}^{M_I(\gamma)}(\mathbf{z})\Phi_{\sigma^{-1}}^I(\mathbf{z}-\lambda)=\Phi_{\sigma^{-1}}^I(\mathbf{z}),\qquad \lambda\in
\widetilde{\Lambda}.
\end{equation}
They are $F$-linearly independent
because their leading coefficients $\pi_\gamma^I(T_{w_0})b_{\sigma^{-1}}^I(\gamma)$ ($\sigma\in W_0^I$) form a linear basis of
$M_I(\gamma)$. The solutions $\Phi_{\sigma^{-1}}^I(\cdot,\gamma)$ ($\sigma\in W_0^I$) 
can be used to define a fundamental matrix solution of the quantum affine KZ equations \eqref{qaKZI}, from which
it follows that 
\[
\textup{Dim}_F\bigl(\textup{Sol}_{KZ}(M_I(\gamma))\bigr)=\textup{Dim}_{\mathbb{C}}(M_I(\gamma)).
\]
Hence the $\Phi_{\sigma^{-1}}^I(\cdot,\gamma)$ ($\sigma\in W_0^I$) form a $F$-linear basis of 
$\textup{Sol}_{KZ}(M_I(\gamma))$ (cf. \cite[\S 5.6]{vMS}, where the proof is discussed in detail for the initial data of type $A$
and $I=\emptyset$).
\end{proof}

\subsection{The connection problem and its solution}\label{connectionproblemsection}
Fix generic $\gamma\in E_{\mathbb{C},I}^\kappa$. There exists unique coefficients
\[
m_{\tau_1,\tau_2}^{I,\sigma}(\cdot,\gamma)\in F\qquad\qquad 
(\sigma\in W_0; \tau_1,\tau_2\in W_0^I)
\] 
such that
\begin{equation}\label{connectioncoefficients}
\nabla^{M_I(\gamma)}(\sigma)\Phi_{\tau_2^{-1}}^I(\cdot,\gamma)=\sum_{\tau_1\in W_0^I}
m_{\tau_1,\tau_2}^{I,\sigma}(\cdot,\gamma)\Phi_{\tau_1^{-1}}^I(\cdot,\gamma).
\end{equation}
\begin{defi}
We call the $\#W_0^I\times\#W_0^I$-matrices
\[
M^{I,\sigma}(\cdot,\gamma)=\bigl(m_{\tau_1,\tau_2}^{I,\sigma}(\cdot,\gamma)\bigr)_{\tau_1,\tau_2\in W_0^I},\qquad \sigma\in W_0
\]
with coefficients in $F$ the connection matrices of the quantum afffine KZ equations
associated to the initial data $(D,\kappa)$ and the principal series representation $M_I(\gamma)$.
\end{defi}
The following cocycle property is immediate:
\begin{equation*}
\begin{split}
M^{I,\sigma\sigma^\prime}(\mathbf{z},\gamma)&=M^\sigma(\mathbf{z},\gamma)
M^{\sigma^\prime}(\sigma^{-1}\mathbf{z},\gamma),\qquad \sigma,\sigma^\prime\in W_0,\\
M^{I,e}(\mathbf{z},\gamma)&=\textup{Id}.
\end{split}
\end{equation*}
To explicitly compute the connection cocycle $\{M^{I,\sigma}(\mathbf{z},\gamma)\}_{\sigma\in W_0}$ it thus suffices
to compute $M^{I,s_i}(\mathbf{z},\gamma)$ ($1\leq i\leq n$).

To state the result we first need to introduce some more notations. For $i\in \{1,\ldots,n\}$ let $i^*\in\{1,\ldots,n\}$ be the index
such that 
\[
-w_0(\alpha_i)=\alpha_{i^*}.
\]
For $\sigma,\sigma^\prime\in W_0$ let $\delta_{\sigma,\sigma^\prime}$  be equal to one if $\sigma=\sigma^\prime$ and equal to
zero otherwise.
For a finite root $\alpha\in R_0$ write
\[
\mathfrak{e}_\alpha(x,y):=
q^{-\frac{1}{2\mu_\alpha}(\kappa_\alpha+\kappa_{2\alpha}-x)(\kappa_\alpha+\kappa_{\alpha^{(1)}}-y)}
\frac{\theta\bigl(\widetilde{a}_\alpha q^y,\widetilde{b}_\alpha q^y, \widetilde{c}_\alpha q^y, d_\alpha q^{y-x}/\widetilde{a}_\alpha;
q_\alpha^2\bigr)}{\theta\bigl(q^{2y},d_\alpha q^{-x};q_\alpha^2\bigr)}.
\]
We write $\widetilde{\mathfrak{e}}_\alpha(x,y)$ ($\alpha\in R_0$) for its dual version,
\[
\widetilde{\mathfrak{e}}_\alpha(x,y):=
q^{-\frac{1}{2\mu_\alpha}(\kappa_\alpha+\kappa_{\alpha^{(1)}}-x)(\kappa_\alpha+\kappa_{2\alpha}-y)}
\frac{\theta\bigl(a_\alpha q^y,b_\alpha q^y,c_\alpha q^y,\widetilde{d}_\alpha q^{y-x}/a_\alpha;
q_\alpha^2\bigr)}{\theta\bigl(q^{2y},\widetilde{d}_\alpha q^{-x};q_\alpha^2\bigr)}.
\]
\begin{thm}\label{cqaKZ}
Fix generic $\gamma\in E_{\mathbb{C},I}^\kappa$ satisfying the additional conditions 
\begin{equation}\label{extrageneric}
q^{2(\widetilde{\beta},\gamma)}\not\in q_{\beta}^{2\mathbb{Z}}\qquad \forall\, 
\beta\in R_0. 
\end{equation}
Let $i\in\{1,\ldots,n\}$ and $\tau_2\in W_0^I$.
\begin{enumerate}
\item If $s_{i^*}\tau_2\not\in W_0^I$ then 
\[ m_{\tau_1,\tau_2}^{I,s_i}(\cdot,\gamma)=\delta_{\tau_1,\tau_2}\qquad \forall\,\tau_1\in W_0^I.
\]
\item If $s_{i*}\tau_2\in W_0^I$ then 
\[
m_{\tau_1,\tau_2}^{I,s_i}(\cdot,\gamma)\equiv 0\quad \textup{ if }\, \tau_1\not\in\{\tau_2,s_{i^*}\tau_2\}
\]
and
\begin{equation}\label{explicitC}
\begin{split}
m_{\tau_2,\tau_2}^{I,s_i}(\mathbf{z},\gamma)&=\frac{\mathfrak{e}_{\alpha_i}\bigl((\alpha_i,\mathbf{z}),
(\widetilde{\alpha}_{i^*},\tau_2\gamma)\bigr)-\widetilde{\mathfrak{e}}_{\alpha_i}\bigl((\widetilde{\alpha}_{i^*},\tau_2\gamma),
(\alpha_i,\mathbf{z})\bigr)}{\widetilde{\mathfrak{e}}_{\alpha_i}\bigl(
(\widetilde{\alpha}_{i^*},\tau_2\gamma),-(\alpha_i,\mathbf{z})\bigr)},\\
m_{s_{i^*}\tau_2,\tau_2}^{I,s_i}(\mathbf{z},\gamma)&=\frac{\mathfrak{e}_{\alpha_i}\bigl((\alpha_i,\mathbf{z}),
-(\widetilde{\alpha}_{i^*},\tau_2\gamma)\bigr)}{\widetilde{\mathfrak{e}}_{\alpha_i}\bigl(
(\widetilde{\alpha}_{i^*},\tau_2\gamma),-(\alpha_i,\mathbf{z})\bigr)}.
\end{split}
\end{equation}
\end{enumerate}
\end{thm}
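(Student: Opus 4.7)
The approach is to reduce to the minimal principal series case $I=\emptyset$ (which is handled in \cite{S2}) via the surjective $H(\kappa)$-linear map $\phi_{I,\gamma}\colon M(\gamma)\twoheadrightarrow M_I(\gamma)$, and then to track which minimal-case connection coefficients survive the projection. Because $\phi_{I,\gamma}$ is $H(\kappa)$-linear it commutes with the cocycles, $\nabla^{M_I(\gamma)}(w)\circ\phi_{I,\gamma}=\phi_{I,\gamma}\circ\nabla^{M(\gamma)}(w)$, and by construction (Proposition \ref{specI}) one has $\phi_{I,\gamma}(\Phi_{\sigma^{-1}}(\mathbf{z},\gamma))=\Phi_{\sigma^{-1}}^I(\mathbf{z},\gamma)$ for $\sigma\in W_0^I$. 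For $\sigma\in W_0\setminus W_0^I$ I will work with the unnormalised lift $\Phi_{\sigma^{-1}}^{unn}(\mathbf{z},\gamma):=A_{\sigma^{-1}}^{unn}(\gamma)\phi_{\sigma\gamma}^{\mathcal{V}}(\Phi(\mathbf{z},\sigma\gamma))$ and show $\phi_{I,\gamma}(\Phi_{\sigma^{-1}}^{unn}(\mathbf{z},\gamma))\equiv 0$: factor $\sigma^{-1}=s_j\sigma'$ with $j\in I$ and $l(\sigma^{-1})=l(\sigma')+1$ so that $A_{\sigma^{-1}}^{unn}(\gamma)=A_{s_j}^{unn}(\gamma)A_{\sigma'}^{unn}(s_j\gamma)$, and invoke the vanishing $\phi_{I,\gamma}\circ A_{s_j}^{unn}(\gamma)\equiv 0$ for $j\in I$ that was established inside the proof of Proposition \ref{bb}(3).

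Next I invoke the solution of the connection problem for minimal principal series from \cite{S2}: the associated connection cocycle on $M(\gamma)$ has the same shape as in \eqref{explicitC}, with nonzero column-entries only at rows $\tau_1\in\{\tau_2,s_{i^*}\tau_2\}$. The index $s_{i^*}\tau_2$ is dictated by the identity $s_iw_0=w_0s_{i^*}$ applied to the plane-wave factor $\mathcal{W}(\mathbf{z},\tau_2\gamma)=q^{(\widetilde{\rho}+w_0\mathbf{z},\rho-\tau_2\gamma)}$ under $\mathbf{z}\mapsto s_i^{-1}\mathbf{z}$, and the explicit theta-function matrix coefficients in \eqref{explicitC} emerge from the bispectral duality \eqref{duality}, which reduces the computation of the minimal-case entries to a one-variable connection problem for asymptotic solutions of the associated rank-one $q$-difference equation of Askey--Wilson type. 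Applying $\phi_{I,\gamma}$ to the minimal-case identity
\[
\nabla^{M(\gamma)}(s_i)\Phi_{\tau_2^{-1}}(\cdot,\gamma)=m_{\tau_2,\tau_2}^{s_i}(\cdot,\gamma)\,\Phi_{\tau_2^{-1}}(\cdot,\gamma)+m_{s_{i^*}\tau_2,\tau_2}^{s_i}(\cdot,\gamma)\,\Phi_{(s_{i^*}\tau_2)^{-1}}(\cdot,\gamma)
\]
(first cleared of denominators by passing to unnormalised intertwiners, then re-normalised on both sides by the remaining non-vanishing $D$-factors) yields \eqref{explicitC} in case (2) verbatim, and in case (1) the $\tau_1=s_{i^*}\tau_2$ contribution is annihilated by the previous paragraph.

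The main obstacle is completing case (1) by verifying that $m_{\tau_2,\tau_2}^{s_i}(\cdot,\gamma)$ specialises to $1$ on the specific $\gamma$ under consideration. Unpacking the condition $s_{i^*}\tau_2\not\in W_0^I$ while $\tau_2\in W_0^I$ forces some $j\in I$ with $\tau_2\alpha_j=\alpha_{i^*}$; combined with the $W$-invariance of $\kappa$ (so that $\kappa_{\alpha_j}=\kappa_{\alpha_{i^*}}=\kappa_{\alpha_i}$) this gives $q^{(\widetilde{\alpha}_{i^*},\tau_2\gamma)}=\widetilde{a}_{\alpha_i}$, producing a collapse of matching theta-function factors in numerator and denominator of both $\mathfrak{e}_{\alpha_i}$ and $\widetilde{\mathfrak{e}}_{\alpha_i}$; the identity $\mathfrak{e}_{\alpha_i}(x,y)-\widetilde{\mathfrak{e}}_{\alpha_i}(y,x)=\widetilde{\mathfrak{e}}_{\alpha_i}(y,-x)$ at $q^y=\widetilde{a}_{\alpha_i}$ is then the required elementary theta-function identity, which can be checked directly from the definitions. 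The careful bookkeeping of unnormalised versus normalised intertwiners is delicate throughout, because $A_{\sigma^{-1}}(\gamma)$ acquires poles precisely at $\gamma\in E_{\mathbb{C},I}^\kappa$ when $\sigma\not\in W_0^I$; the additional genericity hypothesis \eqref{extrageneric} is used exactly to rule out any remaining spurious poles and zeros arising in these degenerations.
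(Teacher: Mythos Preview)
Your proposal is correct and follows essentially the same route as the paper: reduce to the minimal principal series connection problem of \cite{S2}, project via $\phi_{I,\gamma}$, kill the $s_{i^*}\tau_2\not\in W_0^I$ term using $\phi_{I,\gamma}\circ A_{s_j}^{unn}(\gamma)=0$ for $j\in I$, and then verify the diagonal coefficient specialises to $1$. The paper fills in two details you only sketch: it explicitly checks that the unnormalised rewriting (your ``clearing denominators'') produces a coefficient $G(\mathbf{z},\xi)$ regular at $\xi=\gamma$ by tracking the cancelling $(1-\widetilde{a}_j^{-1}q^{(\widetilde{\alpha}_j,\xi)})$ factors, and it proves the theta identity $\mathfrak{e}_{\alpha_i}(x,y)-\widetilde{\mathfrak{e}}_{\alpha_i}(y,x)=\widetilde{\mathfrak{e}}_{\alpha_i}(y,-x)$ at $q^y=\widetilde{a}_{\alpha_i}$ by an application of the classical four-term theta relation \eqref{thetaid} rather than leaving it as ``checkable directly''.
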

\begin{rema}
An alternative formulation of Theorem \ref{cqaKZ} is as follows. Suppose that $\gamma\in E_{\mathbb{C},I}^\kappa$ is generic
and that \eqref{extrageneric} is valid. Let $i\in\{1,\ldots,n\}$ and $\tau_2\in W_0^I$. If $s_{i^*}\tau_2\not\in W_0^I$ then
\[
\nabla^{M_I(\gamma)}(s_i)\Phi_{\tau_2^{-1}}^I(\cdot,\gamma)=\Phi_{\tau_2^{-1}}^I(\cdot,\gamma),
\]
if $s_{i^*}\tau_2\in W_0^I$ then
\[
\nabla^{M_I(\gamma)}(s_i)\Phi_{\tau_2^{-1}}^I(\cdot,\gamma)=
m_{\tau_2,\tau_2}^{I,s_i}(\cdot,\gamma)\Phi_{\tau_2^{-1}}^{I}(\cdot,\gamma)+
m_{s_{i^*}\tau_2,\tau_2}^{I,s_i}(\cdot,\gamma)\Phi_{\tau_2^{-1}s_{i^*}}^I(\cdot,\gamma)
\]
with the coefficients $m_{\tau_2,\tau_2}^{I,s_i}(\cdot,\gamma), m_{s_{i^*}\tau_2,\tau_2}^{I,s_i}(\cdot,\gamma)\in F$
explicitly given by \eqref{explicitC}.
\end{rema}
\begin{proof}
In \cite[Thm. 1.6]{S2} the connection problem for the bispectral problem of the 
Ruij\-se\-naars-Macdonald-Koornwinder-Cherednik difference operators associated to $(D,\kappa)$ is solved. Through the difference
Cherednik-Matsuo correspondence it is equivalent to the solution of the connection problem for the bispectral quantum
KZ equations associated to $(D,\kappa)$ (cf. the proof of \cite[Thm. 1.5]{S2}). In our notations it gives the following.
View the normalised intertwiners $A_\tau(\xi)$ ($\tau\in W_0$) as linear operators on $\mathcal{V}$ depending meromorphically
on $\xi\in E_{\mathbb{C}}$ by identifying $\mathcal{V}\simeq M(\xi)$ as vector spaces through $\phi_\xi^{\mathcal{V}}$.
Then for $1\leq i\leq n$ and $\tau_2\in W_0$,
\begin{equation}\label{start}
\begin{split}
C_{s_i}(\mathbf{z},\xi)(A_{\tau_2^{-1}}(\xi)\Phi(s_i\mathbf{z},\tau_2\xi)\bigr)&=
n_{\tau_2,\tau_2}^{s_i}(\mathbf{z},\xi)\bigl(A_{\tau_2^{-1}}(\xi)\Phi(\mathbf{z},\tau_2\xi)\bigr)\\
&+n_{s_{i^*}\tau_2,\tau_2}^{s_i}(\mathbf{z},\xi)\bigl(A_{\tau_2^{-1}s_{i^*}}(\xi)\Phi(\mathbf{z},s_{i^*}\tau_2\xi)\bigr)
\end{split}
\end{equation}
as $\mathcal{V}$-valued meromorphic functions in $(\mathbf{z},\xi)\in E_{\mathbb{C}}\times E_{\mathbb{C}}$,
with $n_{\tau_1,\tau_2}^{s_i}$ (which is denoted by $m_{\tau_1^{-1},\tau_2^{-1}}^{s_i}$ in \cite{S2}) given explicitly by
\begin{equation*}
\begin{split}
n_{\tau_2,\tau_2}^{s_i}(\mathbf{z},\xi)&=\frac{\mathfrak{e}_{\alpha_i}\bigl((\alpha_i,\mathbf{z}),
(\widetilde{\alpha}_{i^*},\tau_2\xi)\bigr)-\widetilde{\mathfrak{e}}_{\alpha_i}\bigl((\widetilde{\alpha}_{i^*},\tau_2\xi),
(\alpha_i,\mathbf{z})\bigr)}{\widetilde{\mathfrak{e}}_{\alpha_i}\bigl(
(\widetilde{\alpha}_{i^*},\tau_2\xi),-(\alpha_i,\mathbf{z})\bigr)},\\
n_{s_{i^*}\tau_2,\tau_2}^{s_i}(\mathbf{z},\xi)&=\frac{\mathfrak{e}_{\alpha_i}\bigl((\alpha_i,\mathbf{z}),
-(\widetilde{\alpha}_{i^*},\tau_2\xi)\bigr)}{\widetilde{\mathfrak{e}}_{\alpha_i}\bigl(
(\widetilde{\alpha}_{i^*},\tau_2\xi),-(\alpha_i,\mathbf{z})\bigr)}.
\end{split}
\end{equation*}
By the additional generic conditions \eqref{extrageneric} on $\gamma\in E_{\mathbb{C},I}^\kappa$, we can specialise $\xi$ in the coefficients 
$n_{\tau_2,\tau_2}^{s_i}(\cdot,\xi)$ and $n_{s_{i^*}\tau_2,\tau_2}^{s_i}(\cdot,\xi)$ to $\gamma$ to get coefficients
$n_{\tau_2,\tau_2}^{s_i}(\cdot,\gamma), n_{s_{i^*}\tau_2,\tau_2}^{s_i}(\cdot,\gamma)\in F$. 
We consider now two cases. Let $\tau_2\in W_0^I$.\\
{\bf Case 1:} $s_{i^*}\tau_2\in W_0^I$.\\
Then in \eqref{start} we can specialise $\xi$ to $\gamma$ and apply $\phi_{I,\gamma}^{\mathcal{V}}$ to obtain
\[
C_{s_i}^{M_I(\gamma)}(\mathbf{z})\Phi_{\tau_2^{-1}}^I(s_i\mathbf{z},\gamma)=
n_{\tau_2,\tau_2}^{s_i}(\mathbf{z},\gamma)\Phi_{\tau_2^{-1}}^I(\mathbf{z},\gamma)+
n_{s_{i^*}\tau_2,\tau_2}^{s_i}(\mathbf{z},\gamma)\Phi_{\tau_2^{-1}s_{i^*}}^I(\mathbf{z},\gamma)
\]
as identity between $M_I(\gamma)$-valued meromorphic functions in $\mathbf{z}\in E_{\mathbb{C}}$,
cf. Proposition \ref{specI}. This proves (2).\\
{\bf Case 2:} $s_{i^*}\tau_2\not\in W_0^I$.\\
Then we rewrite \eqref{start} as
\begin{equation}\label{startnew}
\begin{split}
C_{s_i}(\mathbf{z},\xi)(A_{\tau_2^{-1}}(\xi)\Phi&(s_i\mathbf{z},\tau_2\xi)\bigr)=
n_{\tau_2,\tau_2}^{s_i}(\mathbf{z},\xi)\bigl(A_{\tau_2^{-1}}(\xi)\Phi(\mathbf{z},\tau_2\xi)\bigr)\\
&+G(\mathbf{z},\xi)
\bigl(\widetilde{\mathcal{S}}(s_{i^*}\tau_2\xi)A_{\tau_2^{-1}s_{i^*}}^{unn}(\xi)\Phi(\mathbf{z},s_{i^*}\tau_2\xi)\bigr)
\end{split}
\end{equation}
with
\begin{equation}\label{G}
G(\mathbf{z},\xi):=\frac{n_{s_{i^*}\tau_2,\tau_2}^{s_i}(\mathbf{z},\xi)}{\widetilde{\mathcal{S}}(s_{i^*}\tau_2\xi)D_{\tau_2^{-1}s_{i^*}}(\xi)}.
\end{equation}
We first show that $G(\mathbf{z},\xi)$ is regular at $\xi=\gamma$. 
Since $\tau_2\in W_0^I$ and $s_{i^*}\tau_2\not\in W_0^I$ we have $\alpha_{i^*}=\tau_2(\beta)$ for some $\beta\in R_0^{I,+}$.
So $s_\beta\in W_{0,I}$, hence
\[
l(s_{i^*}\tau_2)=l(\tau_2s_\beta)=l(\tau_2)+l(s_\beta).
\]
Consequently $\beta=\alpha_j$ for some $j\in I$ and $l(s_{i^*}\tau_2)=l(\tau_2s_j)=l(\tau_2)+1$. 
Hence we have
\begin{equation*}
\begin{split}
D_{\tau_2^{-1}s_{i^*}}(\xi)&=D_{\tau_2^{-1}}(\xi)D_{s_{i^*}}(\tau_2\xi)\\
&=D_{\tau_2^{-1}}(\xi)D_{\alpha_j}(\xi)\\
&=D_{\tau_2^{-1}}(\xi)(1-\widetilde{a}_j^{-1}q^{(\widetilde{\alpha}_j,\xi)})(1-\widetilde{b}_j^{-1}q^{(\widetilde{\alpha}_j,\xi)})
\end{split}
\end{equation*}
by \eqref{cocyclerestricted}, and $D_{\tau_2^{-1}}(\gamma)\not=0$ in view of the generic conditions on $\gamma\in E_{\mathbb{C},I}^\kappa$. 
Note though that $(1-\widetilde{a}_j^{-1}q^{(\widetilde{\alpha}_j,\xi)})$ will be zero when specialising $\xi$ to $\gamma\in E_{\mathbb{C},I}^\kappa$.
We will show though in a moment that this factor also occurs as factor in the connection coefficient $n_{s_{i^*}\tau_2,\tau_2}^{s_i}(\mathbf{z},\xi)$, hence they
will cancel each other out in \eqref{G}. 

Observe that
\begin{equation*}
\begin{split}
\widetilde{\mathcal{S}}(s_{i^*}\tau_2\xi)=&\widetilde{\mathcal{S}}^{rem}(\xi)\bigl(q_j^2\widetilde{a}_j^{-1}q^{(\widetilde{\alpha}_j,\xi)},
q_j^2\widetilde{b}_j^{-1}q^{(\widetilde{\alpha}_j,\xi)}, q_j^2\widetilde{c}_j^{-1}q^{(\widetilde{\alpha}_j,\xi)},
q_j^2\widetilde{d}_j^{-1}q^{(\widetilde{\alpha}_j,\xi)};q_j^2\bigr)_{\infty},\\
\widetilde{\mathcal{S}}^{rem}(\xi):=&\prod_{\alpha\in \tau_2^{-1}(R_0^+)\setminus\{\alpha_j\}}
\bigl(q_\alpha^2\widetilde{a}_{\alpha}^{-1}q^{-(\widetilde{\alpha},\xi)}, q_\alpha^2\widetilde{b}_{\alpha}^{-1}q^{-(\widetilde{\alpha},\xi)},
q_\alpha^2\widetilde{c}_{\alpha}^{-1}q^{-(\widetilde{\alpha},\xi)}, q_\alpha^2\widetilde{d}_{\alpha}^{-1}q^{-(\widetilde{\alpha},\xi)};q_\alpha^2\bigr)_{\infty}
\end{split}
\end{equation*}
and $\widetilde{\mathcal{S}}^{rem}(\gamma)\not=0$ by the generic conditions on $\gamma\in E_{\mathbb{C},I}^\kappa$. 
The connection coefficient $n_{s_{i^*}\tau_2,\tau_2}^{s_i}(\mathbf{z},\xi)$ decouples,
\[
n_{s_{i^*}\tau_2,\tau_2}^{s_i}(\mathbf{z},\xi)=\frac{\widetilde{p}_{\widetilde{\alpha}_j}(-(\widetilde{\alpha}_j,\xi))}{p_{\alpha_i}(-(\alpha_i,\mathbf{z}))}
\]
with 
\[
p_\alpha(x):=\frac{\theta\bigl(a_\alpha q^x,b_\alpha q^x,c_\alpha q^x, d_\alpha q^x;q_\alpha^2\bigr)}
{\theta\bigl(q^{2x};q_\alpha^2\bigr)}q^{\frac{1}{\mu_\alpha}(\kappa_\alpha+\kappa_{\alpha^{(1)}})x}
\]
and with $\widetilde{p}_{\widetilde{\alpha}}(x)$ its dual version
\[
\widetilde{p}_{\widetilde{\alpha}}(x):=\frac{\theta\bigl(\widetilde{a}_\alpha q^x,\widetilde{b}_\alpha q^x,\widetilde{c}_\alpha q^x, \widetilde{d}_\alpha q^x;q_\alpha^2\bigr)}
{\theta\bigl(q^{2x};q_\alpha^2\bigr)}q^{\frac{1}{\mu_\alpha}(\widetilde{\kappa}_{\widetilde{\alpha}}+
\widetilde{\kappa}_{\widetilde{\alpha}^{(1)}})x}
\]
(the decoupling formula in Remark \ref{decoupleremark}{\bf (ii)} is a special case). 
Combining these observations
we conclude that the coefficient $G(\mathbf{z},\xi)$ (see \eqref{G}) can be rewritten as
\begin{equation*}
\begin{split}
G(\mathbf{z},\xi)&=\frac{\widetilde{a}_j\widetilde{b}_j}
{p_{\alpha_i}(-(\alpha_i,\mathbf{z}))D_{\tau_2^{-1}}(\xi)\widetilde{\mathcal{S}}^{rem}(\xi)
\theta\bigl(q^{-2(\widetilde{\alpha}_j,\xi)};q_j^2\bigr)}
q^{-2(\widetilde{\alpha}_j,\xi)-\frac{1}{\mu_j}(\widetilde{\kappa}_{\widetilde{\alpha}_j}+\widetilde{\kappa}_{\widetilde{\alpha}_j^{(1)}})
(\widetilde{\alpha}_j,\xi)}\\
&\times \bigl(q_j^2\widetilde{a}_jq^{-(\widetilde{\alpha}_j,\xi)},
q_j^2\widetilde{b}_jq^{-(\widetilde{\alpha}_j,\xi)},
\widetilde{c}_jq^{-(\widetilde{\alpha}_j,\xi)},
\widetilde{d}_jq^{-(\widetilde{\alpha}_j,\xi)};q_j^2\bigr)_{\infty}.
\end{split}
\end{equation*}
Hence $G(\mathbf{z},\xi)$ is regular at $\xi=\gamma$.

Returning to the second term in the right hand side of \eqref{startnew}, we next observe that
\begin{equation}\label{secondpart}
\begin{split}
\widetilde{\mathcal{S}}(s_{i^*}\tau_2\xi)A_{\tau_2^{-1}s_{i^*}}^{unn}(\xi)&\Phi(\mathbf{z},s_{i^*}\tau_2\xi)=\\
&=\frac{\mathcal{W}(\mathbf{z},s_{i^*}\tau_2\xi)}{\mathcal{S}(\mathbf{z})}
A_{s_j}^{unn}(\xi)\Bigl(A_{\tau_2^{-1}}^{unn}(s_j\xi)\sum_{\mu\in Q_+}\Gamma_\mu(s_{i^*}\tau_2\xi)q^{-(\mu,\mathbf{z})}\Bigr)
\end{split}
\end{equation}
since 
\[
A_{\tau_2^{-1}s_{i^*}}^{unn}(\xi)=A_{s_j\tau_2^{-1}}^{unn}(\xi)=A_{s_j}^{unn}(\xi)A_{\tau_2^{-1}}^{unn}(s_j\xi)
\]
by \eqref{cocyclerestricted}. Now $\phi_{I,\gamma}^{\mathcal{V}}\circ A_{s_j}^{unn}(\gamma): \mathcal{V}\rightarrow M_I(\gamma)$
is the zero map (see the proof of Proposition \ref{bb}), hence 
we conclude that
\[
\phi_{I,\gamma}^{\mathcal{V}}\Bigl(\widetilde{\mathcal{S}}(s_{i^*}\tau_2\gamma)A_{\tau_2^{-1}s_{i^*}}^{unn}(\gamma)\Phi(\mathbf{z},s_{i^*}\tau_2\gamma)
\Bigr)\equiv 0
\]
as $M_I(\gamma)$-valued holomorphic function in $\mathbf{z}\in E_{\mathbb{C}}$. 

Thus specialising $\xi$ to $\gamma$ in \eqref{startnew} and applying
the map $\phi_{I,\gamma}^{\mathcal{V}}$ we obtain
\[
C_{s_i}^{M_I(\gamma)}(\mathbf{z})\Phi_{\tau_2^{-1}}^I(s_i\mathbf{z},\gamma)=n_{\tau_2,\tau_2}^{s_i}(\mathbf{z},\gamma)
\Phi_{\tau_2^{-1}}^I(\mathbf{z},\gamma).
\]
It thus remains to show that 
\begin{equation}\label{startnewnew}
n_{\tau_2,\tau_2}^{s_i}(\mathbf{z},\gamma)\equiv 1
\end{equation}
as meromorphic function in $\mathbf{z}\in E_{\mathbb{C}}$. To prove \eqref{startnewnew} first note that 
\[
n_{\tau_2,\tau_2}^{s_i}(\mathbf{z},\gamma)=
\frac{\mathfrak{e}_{\alpha_i}((\alpha_i,\mathbf{z}),(\widetilde{\alpha}_j,\gamma))-
\widetilde{\mathfrak{e}}_{\alpha_i}((\widetilde{\alpha}_j,\gamma),(\alpha_i,\mathbf{z}))}
{\widetilde{\mathfrak{e}}_{\alpha_i}((\widetilde{\alpha}_{j},\gamma),-(\alpha_i,\mathbf{z}))}
\]
since $\alpha_j=\tau_2^{-1}(\alpha_{i^*})$. But $j\in I$, hence $\gamma\in E_{\mathbb{C},I}^\kappa$ and
$\alpha_{i}\in W_0(\alpha_j)$
implies that 
\[
(\widetilde{\alpha}_j,\gamma)=\widetilde{\kappa}_{\widetilde{\alpha}_j}+
\widetilde{\kappa}_{2\widetilde{\alpha}_j}=
\widetilde{\kappa}_{\widetilde{\alpha}_i}+
\widetilde{\kappa}_{2\widetilde{\alpha}_i}.
\]
Consequently
\begin{equation}\label{stepn}
n_{\tau_2,\tau_2}^{s_i}(\mathbf{z},\gamma)=
\frac{\mathfrak{e}_{\alpha_i}((\alpha_i,\mathbf{z}),\widetilde{\kappa}_{\widetilde{\alpha}_i}+
\widetilde{\kappa}_{2\widetilde{\alpha}_i})-
\widetilde{\mathfrak{e}}_{\alpha_i}(\widetilde{\kappa}_{\widetilde{\alpha}_i}+
\widetilde{\kappa}_{2\widetilde{\alpha}_i},(\alpha_i,\mathbf{z}))}
{\widetilde{\mathfrak{e}}_{\alpha_i}(\widetilde{\kappa}_{\widetilde{\alpha}_i}+
\widetilde{\kappa}_{2\widetilde{\alpha}_i},-(\alpha_i,\mathbf{z}))}.
\end{equation}
By a direct computation the numerator in \eqref{stepn} can be rewritten as
\begin{equation}\label{quart}
\begin{split}
\mathfrak{e}_{\alpha_i}&((\alpha_i,\mathbf{z}),\widetilde{\kappa}_{\widetilde{\alpha}_i}+
\widetilde{\kappa}_{2\widetilde{\alpha}_i})-
\widetilde{\mathfrak{e}}_{\alpha_i}(\widetilde{\kappa}_{\widetilde{\alpha}_i}+
\widetilde{\kappa}_{2\widetilde{\alpha}_i},(\alpha_i,\mathbf{z}))=\\
&=\frac{\theta\bigl(\widetilde{d}_i/\widetilde{a}_i,q^{2(\alpha_i,\mathbf{z})},\widetilde{a}_i\widetilde{b}_i,
\widetilde{a}_i\widetilde{c}_i;q_i^2\bigr)-
\theta\bigl(a_iq^{(\alpha_i,\mathbf{z})},b_iq^{(\alpha_i,\mathbf{z})}, c_iq^{(\alpha_i,\mathbf{z})},
\widetilde{d}_iq^{(\alpha_i,\mathbf{z})}/a_i\widetilde{a}_i;q_i^2\bigr)}
{\theta\bigl(q^{2(\alpha_i,\mathbf{z})},\widetilde{d}_i/\widetilde{a}_i;q_i^2\bigr)}.
\end{split}
\end{equation}
Recall the well known theta function identity
\begin{equation}\label{thetaid}
\theta(x\nu,x/\nu,\lambda\mu,\mu/\lambda;q)-\theta(x\lambda,x/\lambda,\mu\nu,\mu/\nu;q)=
-\frac{\mu}{\lambda}\theta(x\mu, x/\mu, \lambda\nu, \lambda/\nu;q).
\end{equation}
For an exhaustive discussion of \eqref{thetaid} and its origin, see \cite{Kotheta}. Using
\eqref{thetaid} with $(q,x,\nu,\lambda,\mu)$ specialised to
\[
(q_i^2,c_i\sqrt{a_ib_i},\sqrt{a_i^{-1}b_i},\sqrt{a_ib_i}q^{-(\alpha_i,\mathbf{z})},
\sqrt{a_ib_i}q^{(\alpha_i,\mathbf{z})})
\]
we can rewrite \eqref{quart} as
\begin{equation}\label{quart2}
\begin{split}
\mathfrak{e}_{\alpha_i}&((\alpha_i,\mathbf{z}),\widetilde{\kappa}_{\widetilde{\alpha}_i}+
\widetilde{\kappa}_{2\widetilde{\alpha}_i})-
\widetilde{\mathfrak{e}}_{\alpha_i}(\widetilde{\kappa}_{\widetilde{\alpha}_i}+
\widetilde{\kappa}_{2\widetilde{\alpha}_i},(\alpha_i,\mathbf{z}))=\\
&=-\frac{\theta\bigl(a_iq^{-(\alpha_i,\mathbf{z})}, b_iq^{-(\alpha_i,\mathbf{z})}, c_iq^{-(\alpha_i,\mathbf{z})}
\widetilde{d}_iq^{-(\alpha_i,\mathbf{z})}/a_i\widetilde{a}_i;q_i^2\bigr)}
{\theta\bigl(q^{2(\alpha_i,\mathbf{z})},\widetilde{d}_i/\widetilde{a}_i;q_i^2\bigr)}
q^{2(\alpha_i,\mathbf{z})}.
\end{split}
\end{equation}
A direct computation shows that the right hand side of \eqref{quart2} equals 
$\widetilde{\mathfrak{e}}_{\alpha_i}(\widetilde{\kappa}_{\widetilde{\alpha}_i}+
\widetilde{\kappa}_{2\widetilde{\alpha}_i},-(\alpha_i,\mathbf{z}))$. Hence
\[
n_{\tau_2,\tau_2}^{s_i}(\mathbf{z},\gamma)\equiv 1
\]
as meromorphic function in $\mathbf{z}\in E_{\mathbb{C}}$, as desired.
\end{proof}

\subsection{The link to the spin-$\frac{1}{2}$ XXZ boundary qKZ equations}\label{link}
We specialise the results of the previous subsection to the initial data $(D,\kappa)$ corresponding to the Koornwinder (or $C^\vee C_n$) case
of the Cherednik-Macdonald theory. The five tuple $D=(R_0,\Delta_0,\bullet,\Lambda,\widetilde{\Lambda})$ is then given as follows (we choose our
notations to ensure that it matches with the notations in Subsection \ref{spinsec}).
Take $E=\mathbb{R}^n$ ($n\geq 2$) with the standard orthonormal basis $\{e_i\}_{i=1}^n$. The root system $R_0$ is taken to be
the root system of type $B_n$ realised by
\[
R_0=\{\pm(e_i\pm e_j)\}_{1\leq i<j\leq n}\cup \{\pm e_i\}_{i=1}^n.
\]
As ordered basis we take
\[
\Delta_0=(\alpha_1,\ldots,\alpha_{n-1},\alpha_n)=(e_1-e_2,\ldots,e_{n-1}-e_n,e_n).
\]
We are considering the twisted theory
$\bullet=t$, hence $\mu_\alpha=|\alpha|^2/2$ and $\widetilde{\alpha}=\alpha$ for all $\alpha\in R_0$. The lattices are
taken to be as small as possible,
\[
\Lambda=\mathbb{Z}^n=\widetilde{\Lambda}
\]
(the root lattice of $R_0$). The affine simple root is $\alpha=\frac{c}{2}-e_1$. 
The Weyl group $W_0=\langle s_1,\ldots,s_{n-1},s_n\rangle$ is isomorphic to the hyperoctahedral group $S_n\ltimes (\pm 1)^n$.
Furthermore we have  $s_0=\tau(e_1)s_{e_1}$ and $W=\langle s_0,\ldots,s_n\rangle\simeq W_0\ltimes\mathbb{Z}^n$. 

The affine root system $R(D)$ associated to $D$ has five $W$-orbits
\[
W\alpha_0, W(2\alpha_0), W\alpha_1, W\alpha_n, W(2\alpha_n).
\]
A multiplicity function 
$\kappa: R(D)\rightarrow\mathbb{R}$ thus is determined by five values, which we denote by
\begin{equation}\label{linkparameters}
(\zeta,\upsilon,\zeta^\prime,\upsilon^\prime,\kappa):=
(\kappa_{\alpha_n}, \kappa_{2\alpha_n}, \kappa_{\alpha_0}, \kappa_{2\alpha_0},\kappa_{\alpha_1}).
\end{equation}
The dual parameters are then obtained by interchanging $\upsilon$
and $\zeta^\prime$. The Askey-Wilson parameters $\{a_\alpha,b_\alpha,c_\alpha,d_\alpha\}$
are $\{a,b,c,d\}$ (see \eqref{abcd}) if $\alpha\in R_0$ is short. If $\alpha\in R_0$ is long then
$\{a_\alpha,b_\alpha,c_\alpha,d_\alpha\}=\{q^{2\kappa},-1,q^{1+2\kappa},-q\}$.

We take $I:=\{1,\ldots,n-1\}$, so that
\[
W_{0,I}=\langle s_1,\ldots,s_{n-1}\rangle\simeq S_n.
\]
Note that
\begin{equation*}
E_{\mathbb{C},I}^\kappa
=\{(\xi+(n-1)\kappa,\xi+(n-3)\kappa,\ldots,\xi+(1-n)\kappa)\,\, | \,\, \xi\in\mathbb{C} \}.
\end{equation*}
Recall the spin representation $\bigl(\pi^{sp}_{(\xi)},(\mathbb{C}^2)^{\otimes n}\bigr)$ (see Proposition \ref{spinprop}) and 
the elements $w_{\underline{\epsilon}}\in W_0$ ($\underline{\epsilon}\in\{\pm 1\}^n$) defined by \eqref{wepsilon},
which are the minimal coset representatives $W_0^I$ of $W_0/W_{0,I}$ (see Lemma \ref{wepsilonlemma}). By
\cite[Prop. 3.5]{SV} we have 
\begin{equation}\label{isomorfismexplicit}
\bigl(\pi_\gamma^I,M_I(\gamma)\bigr)\simeq \bigl(\pi^{sp}_{(\xi)},(\mathbb{C}^2)^{\otimes n}\bigr)  
\end{equation}
as $H(\kappa)$-modules, with $\gamma\in E_{\mathbb{C},I}^\kappa$ given by
\begin{equation}\label{gammaspecial}
\gamma:=(\xi+(n-1)\kappa,\xi+(n-3)\kappa,\ldots,\xi+(1-n)\kappa).
\end{equation}
The isomorphism $M_I(\gamma)\overset{\sim}{\longrightarrow} \bigl(\mathbb{C}^2\bigr)^{\otimes n}$ of $H(\kappa)$-modules
maps the basis element $v_{w_{\underline{\epsilon}}}(\gamma)\in M_I(\gamma)$ to $v_{\underline{\epsilon}}\in
\bigl(\mathbb{C}^2\bigr)^{\otimes n}$ for all $\underline{\epsilon}\in\{\pm 1\}^n$ (see Lemma \ref{wepsilonlemma}).

In the remainder of this subsection we assume that $\gamma\in E_{\mathbb{C},I}^\kappa$ (see \eqref{gammaspecial}) 
is generic and satisfies \eqref{extrageneric} (this gives generic conditions
on $\xi$ and $\kappa$).
The isomorphism $M_I(\gamma)\overset{\sim}{\longrightarrow} (\mathbb{C}^2)^{\otimes n}$ allows to transfer 
the definitions and results of this section to the setting of the XXZ spin chain (see Section \ref{section2}) 
in the following way. 

Let $\underline{\epsilon},\underline{\epsilon^\prime}\in\{\pm 1\}^n$ and $\sigma\in W_0$, then
\begin{enumerate}
\item The vector $b_{w_{\underline{\epsilon}}^{-1}}^I(\gamma)\in M_I(\gamma)$ (Corollary \ref{normcor} and Proposition \ref{bb}) is mapped to 
$b_{\underline{\epsilon}}\in\bigl(\mathbb{C}^2\bigr)^{\otimes n}$ (Lemma \ref{bepsilon});
\item The quantum affine KZ equations associated to $M_I(\gamma)$ (Definition \ref{qaKZV}) 
become the boundary quantum KZ equations \eqref{REFLqKZXXZ} associated to the XXZ spin-$\frac{1}{2}$ chain;
\item The power series solution $\Phi_{w_{\underline{\epsilon}}^{-1}}^I(\cdot,\gamma)$ (Proposition \ref{specI}) of the quantum affine
KZ equations 
become the power series
solution $\Phi_{\underline{\epsilon}}(\cdot)$ (Theorem \ref{asymptoticspin}) of the boundary qKZ equation \eqref{REFLqKZXXZ}.
In particular, Theorem \ref{asymptoticspin} is a special case of Proposition \ref{specI}. 
\item The connection coefficient $m_{w_{\underline{\epsilon}},w_{\underline{\epsilon}^\prime}}^{I,\sigma}(\cdot,\gamma)$
(Subsection \ref{connectionproblemsection}) equals 
$M_{cm;\underline{\epsilon},\underline{\epsilon}^\prime}^\sigma(\cdot,\xi)$ (Subsection \ref{connXXZsec}).
\end{enumerate}
We are now ready to prove Theorem \ref{Mwc} as a consequence of the explicit expressions of the connection matrix
of the quantum affine KZ equation for principal series modules (see Theorem \ref{cqaKZ}).\\

\noindent
{\it Proof of Theorem \ref{Mwc}.}
Recall that $M_{cm}^\sigma(\mathbf{z},\xi)$ is the linear operator on $\bigl(\mathbb{C}^2\bigr)^{\otimes n}$ defined by
\begin{equation*}
\begin{split}
M_{cm}^\sigma(\mathbf{z},\xi)v_{\underline{\epsilon}^\prime}:=&\sum_{\underline{\epsilon}\in\{\pm 1\}^n}
M_{cm;\underline{\epsilon},\underline{\epsilon}^\prime}^\sigma(\mathbf{z},\xi)v_{\underline{\epsilon}}\\
=&\sum_{\underline{\epsilon}\in\{\pm 1\}^n}m_{w_{\underline{\epsilon}}, w_{\underline{\epsilon}^\prime}}^{I,\sigma}(\mathbf{z},\gamma)
v_{\underline{\epsilon}}.
\end{split}
\end{equation*}
Fix $i\in \{1,\ldots,n\}$ and note that $i^*=i$. By Theorem \ref{cqaKZ} we have
\begin{equation}\label{no1}
M_{cm}^{s_i}(\mathbf{z},\xi)v_{\underline{\epsilon}^\prime}=v_{\underline{\epsilon}^\prime}\quad \hbox{ if }\,\, s_iw_{\underline{\epsilon}^\prime}\not\in
W_0^I,
\end{equation}
and
\begin{equation}\label{yes1}
M_{cm}^{s_i}(\mathbf{z},\xi)v_{\underline{\epsilon}^\prime}=
m_{w_{\underline{\epsilon}^\prime}, w_{\underline{\epsilon}^\prime}}^{I,s_i}(\mathbf{z},\gamma)v_{\underline{\epsilon}^\prime}+
m_{w_{\underline{\epsilon}^{\prime\prime}},w_{\underline{\epsilon}^\prime}}^{I,s_i}(\mathbf{z},\gamma)v_{\underline{\epsilon}^{\prime\prime}}
\quad \hbox{ if }\,\, s_iw_{\underline{\epsilon}^\prime}\in W_0^I,
\end{equation}
where $\underline{\epsilon}^{\prime\prime}\in\{\pm 1\}^n$ is such that $s_iw_{\underline{\epsilon}^\prime}=
w_{\underline{\epsilon}^{\prime\prime}}$. Let $W_0\simeq S_n\ltimes (\pm 1)^n$ act on $\{\pm 1\}^n$ by permutations and sign changes.
We need the following simple lemma.
\begin{lem}\label{leminbetween}
Let $\underline{\epsilon}\in\{\pm 1\}^n$ and $1\leq j<n$.\\
{\bf (i)} $s_jw_{\underline{\epsilon}}\in W_0^I$ $\Leftrightarrow$ $(\epsilon_j,\epsilon_{j+1})\in
\{(+1,-1), (-1,+1)\}$, in which case $s_jw_{\underline{\epsilon}}=w_{s_j\underline{\epsilon}}$.\\
{\bf (ii)} $s_nw_{\underline{\epsilon}}=w_{s_n\underline{\epsilon}}\in W_0^I$.
\end{lem}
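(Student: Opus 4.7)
The plan rests on two structural facts which together reduce the lemma to a root-theoretic computation. First, the stabilizer of $\underline{1}=(1,\ldots,1)$ in $W_0$ is exactly $W_{0,I}=\langle s_1,\ldots,s_{n-1}\rangle\simeq S_n$ (since $s_i$ fixes $\underline{1}$ for $i<n$ while $s_n$ does not, and a size count forces equality), so that $w\mapsto w(\underline{1})$ descends to a bijection between left cosets in $W_0/W_{0,I}$ and elements of $\{\pm 1\}^n$. Combined with Lemma~\ref{wepsilonlemma}, this identifies $w_{\underline{\epsilon}}$ as the unique element of $W_0^I$ in the coset $\{w\in W_0 : w(\underline{1})=\underline{\epsilon}\}$. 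Second, I will use the standard reformulation $W_0^I=\{\sigma\in W_0 : \sigma(\alpha_i)\in R_0^+\text{ for all }i\in I\}$. A direct computation gives $s_jw_{\underline{\epsilon}}(\underline{1})=s_j\underline{\epsilon}$ for every $1\le j\le n$, which immediately identifies the coset containing $s_jw_{\underline{\epsilon}}$.

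For part~(i) with $1\le j<n$: if $\epsilon_j=\epsilon_{j+1}$ then $s_j\underline{\epsilon}=\underline{\epsilon}$, so $s_jw_{\underline{\epsilon}}$ lies in the coset $w_{\underline{\epsilon}}W_{0,I}$ but differs from its minimal representative $w_{\underline{\epsilon}}$ by the nontrivial factor $s_j$, hence $s_jw_{\underline{\epsilon}}\notin W_0^I$. If $\epsilon_j\ne\epsilon_{j+1}$, then $s_j\underline{\epsilon}\ne\underline{\epsilon}$ and $s_jw_{\underline{\epsilon}}$ lies in the coset $w_{s_j\underline{\epsilon}}W_{0,I}$, so by uniqueness of the minimal representative the claim $s_jw_{\underline{\epsilon}}=w_{s_j\underline{\epsilon}}$ reduces to showing $s_jw_{\underline{\epsilon}}\in W_0^I$. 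For $i\in I$ one has $s_jw_{\underline{\epsilon}}(\alpha_i)=s_j(w_{\underline{\epsilon}}(\alpha_i))$, and since $w_{\underline{\epsilon}}\in W_0^I$ the root $w_{\underline{\epsilon}}(\alpha_i)$ is positive; as $s_j$ preserves positivity of every positive root except $\alpha_j$, the only obstruction to membership in $W_0^I$ is the existence of some $i\in I$ with $w_{\underline{\epsilon}}(\alpha_i)=\alpha_j$.

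The main combinatorial step is to rule this out under the hypothesis $\epsilon_j\ne\epsilon_{j+1}$. Writing $w_{\underline{\epsilon}}$ as a signed permutation $e_k\mapsto\eta_ke_{\pi(k)}$ with $\epsilon_l=\eta_{\pi^{-1}(l)}$, the identity $\eta_ie_{\pi(i)}-\eta_{i+1}e_{\pi(i+1)}=e_j-e_{j+1}$ forces $\eta_i=\eta_{i+1}$ together with $\{\pi(i),\pi(i+1)\}=\{j,j+1\}$; reading off the values of $\eta$ at $\pi^{-1}(j)$ and $\pi^{-1}(j+1)$ then yields $\epsilon_j=\epsilon_{j+1}$ in both sign cases, contradicting the hypothesis. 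I expect this signed-permutation bookkeeping to be the most delicate step of the argument, though it is routine once the signed-permutation picture is in place.

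Part~(ii) is cleaner and exploits the presence of two root lengths in $R_0=B_n$. For every $i\in I$ the root $w_{\underline{\epsilon}}(\alpha_i)$ has squared norm $2$ (as $W_0$ acts by isometries and $\alpha_i$ is long), whereas $\alpha_n=e_n$ has squared norm $1$; hence $w_{\underline{\epsilon}}(\alpha_i)\ne\alpha_n$ automatically, so $s_nw_{\underline{\epsilon}}(\alpha_i)=s_n(w_{\underline{\epsilon}}(\alpha_i))$ remains a positive root and $s_nw_{\underline{\epsilon}}\in W_0^I$. Since $s_n\underline{\epsilon}\ne\underline{\epsilon}$, the same uniqueness argument gives $s_nw_{\underline{\epsilon}}=w_{s_n\underline{\epsilon}}$.
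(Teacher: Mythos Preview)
Your proof is correct and takes a genuinely different route from the paper. The paper works directly with the explicit reduced expression \eqref{wepsilon} for $w_{\underline{\epsilon}}$ and pushes $s_j$ through the product $(s_{i_k}\cdots s_n)\cdots(s_{i_1}\cdots s_n)$ via Coxeter commutation relations, obtaining in the cases $(\epsilon_j,\epsilon_{j+1})\in\{(+1,+1),(-1,-1)\}$ an explicit identity $s_jw_{\underline{\epsilon}}=w_{\underline{\epsilon}}s_k$ with $k<n$; for part~(ii) it simply observes that prepending or deleting the factor $s_n$ in the leftmost block realises $w_{s_n\underline{\epsilon}}$. Your argument instead exploits the orbit--stabilizer bijection $W_0/W_{0,I}\simeq\{\pm 1\}^n$ together with the positivity criterion \eqref{W0Ialt} for $W_0^I$: once one knows which coset $s_jw_{\underline{\epsilon}}$ lies in, membership in $W_0^I$ is decided by whether $w_{\underline{\epsilon}}(\alpha_i)=\alpha_j$ for some $i\in I$, and you settle this by a signed-permutation calculation. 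The paper's word-level approach produces the explicit factorisation $s_jw_{\underline{\epsilon}}=w_{\underline{\epsilon}}s_k$ (which is slightly more than what the lemma asserts), while your approach is more structural and yields a particularly slick proof of~(ii) via the root-length disparity in $B_n$, an observation the paper does not use.
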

\begin{proof}
{\bf (ii)} is trivial in view of the explicit expression \eqref{wepsilon} of $w_{\underline{\epsilon}}\in W_0^I$. The explicit expression
\eqref{wepsilon} of $w_{\underline{\epsilon}}$ immediately
implies that $s_jw_{\underline{\epsilon}}=
w_{s_j\underline{\epsilon}}$ if $(\epsilon_j,\epsilon_{j+1})\in \{(+1,-1), (-1,+1)\}$. 
Suppose that $(\epsilon_j,\epsilon_{j+1})=(+1,+1)$. Then 
\[
s_jw_{\underline{\epsilon}}=w_{\underline{\epsilon}}s_k
\]
for some $1\leq k\leq j<n$ since
\begin{equation*}
s_j(s_rs_{r+1}\cdots s_{n-1}s_n)=
\begin{cases}
(s_rs_{r+1}\cdots s_{n-1}s_n)s_j\quad &\hbox{ if }\,\, j+1<r\leq n,\\
(s_rs_{r+1}\cdots s_{n-1}s_n)s_{j-1}\quad &\hbox{ if }\,\, 1\leq r<j.\\
\end{cases}
\end{equation*}
Hence $s_jw_{\underline{\epsilon}}\not\in W_0^I$.

Suppose that $(\epsilon_j,\epsilon_{j+1})=(-1,-1)$. Then we need in addition the equality
\[
s_j\bigl((s_{j+1}s_{j+2}\cdots s_n)(s_js_{j+1}\cdots s_n)\bigr)=
\bigl((s_{j+1}s_{j+2}\cdots s_n)(s_js_{j+1}\cdots s_n)\bigr)s_{n-1}
\]
in $W_0$ to conclude that $s_jw_{\underline{\epsilon}}=w_{\underline{\epsilon}}s_k$ for some $1\leq k<n$, hence
$s_jw_{\underline{\epsilon}}\not\in W_0^I$.
\end{proof}
Continuing with the proof of Theorem \ref{Mwc}, we conclude from \eqref{no1}, \eqref{yes1}, Lemma \ref{leminbetween} 
and the explicit expressions for the connection coefficients 
$m_{w_{\underline{\epsilon}}, w_{\underline{\epsilon}^\prime}}^{I,s_i}(\cdot,\gamma)$
(see Theorem \ref{cqaKZ}) that for all $\underline{\epsilon}^\prime=(\epsilon_1^\prime,\ldots,\epsilon_n^\prime)\in\{\pm 1\}^n$,
\[
M_{cm}^{s_i}(\mathbf{z},\xi)v_{\underline{\epsilon}^\prime}=R^{(i)}((\alpha_i,\mathbf{z}), (\alpha_i,w_{\underline{\epsilon}^{\prime (i)}}\gamma))_{i,i+1}
v_{\underline{\epsilon}^\prime}
\]
if $1\leq i<n$
with $\underline{\epsilon}^{\prime (i)}:=(\epsilon_1^\prime,\ldots,\epsilon_{i-1}^\prime,1,-1,\epsilon_{i+1}^\prime,\ldots,\epsilon_n^\prime)$
and 
\begin{equation*}
R^{(i)}(x,y):=
\left(\begin{matrix} 1 & 0 & 0 & 0\\
0 & \frac{\mathfrak{e}_{\alpha_i}(x,y)-\widetilde{\mathfrak{e}}_{\alpha_i}(y,x)}{\widetilde{\mathfrak{e}}_{\alpha_i}(y,-x)}
& \frac{\mathfrak{e}_{\alpha_i}(x,y)}{\widetilde{\mathfrak{e}}_{\alpha_i}(-y,-x)} & 0\\
0 & \frac{\mathfrak{e}_{\alpha_i}(x,-y)}{\widetilde{\mathfrak{e}}_{\alpha_i}(y,-x)} 
& \frac{\mathfrak{e}_{\alpha_i}(x,-y)-\widetilde{\mathfrak{e}}_{\alpha_i}(-y,x)}{\widetilde{\mathfrak{e}}_{\alpha_i}(-y,-x)} & 0\\
0 & 0 & 0 & 1
\end{matrix}\right),
\end{equation*}
and
\[
M_{cm}^{s_n}(\mathbf{z},\xi)v_{\underline{\epsilon}^\prime}=K((\alpha_n,\mathbf{z}),(\alpha_n,w_{\underline{\epsilon}^{\prime (n)}}\gamma))_n
v_{\underline{\epsilon}^\prime}
\]
with $\underline{\epsilon}^{\prime (n)}:=(\epsilon_1^\prime,\ldots,\epsilon_{n-1}^\prime,+1)$ and
\begin{equation*}
K(x,y):=
\left(\begin{matrix}
\frac{\mathfrak{e}_{\alpha_n}(x,y)-\widetilde{\mathfrak{e}}_{\alpha_n}(y,x)}{\widetilde{\mathfrak{e}}_{\alpha_n}(y,-x)} & 
\frac{\mathfrak{e}_{\alpha_n}(x,y)}{\widetilde{\mathfrak{e}}_{\alpha_n}(-y,-x)}\\
\frac{\mathfrak{e}_{\alpha_n}(x,-y)}{\widetilde{\mathfrak{e}}_{\alpha_n}(y,-x)} & 
\frac{\mathfrak{e}_{\alpha_n}(x,-y)-\widetilde{\mathfrak{e}}_{\alpha_n}(-y,x)}{\widetilde{\mathfrak{e}}_{\alpha_n}(-y,-x)}
\end{matrix}
\right).
\end{equation*}

If $1\leq i<n$ then $\bigl(\mathbb{Z},\alpha_i^\vee\bigr)=\mathbb{Z}$, so \cite[(1.9) \& Prop. 1.7]{S2} gives
\begin{equation*}
\begin{split}
\frac{\mathfrak{e}_{\alpha_i}(x,y)-\widetilde{\mathfrak{e}}_{\alpha_i}(y,x)}{\widetilde{\mathfrak{e}}_{\alpha_i}(y,-x)}&=
\frac{\theta\bigl(q^{2\kappa},q^{y-x};q\bigr)}{\theta\bigl(q^y,q^{2\kappa-x};q\bigr)}q^{(2\kappa-y)x}=B_{cm}(x,-y),\\
\frac{\mathfrak{e}_{\alpha_i}(x,-y)}{\widetilde{\mathfrak{e}}_{\alpha_i}(y,-x)}&=\frac{\theta\bigl(q^{2\kappa-y},q^{-x};q\bigr)}
{\theta\bigl(q^{2\kappa-x},q^{-y};q\bigr)}q^{2\kappa(x-y)}=A_{cm}(x,y),
\end{split}
\end{equation*}
 hence
\[
M_{cm}^{s_i}(\mathbf{z},\xi)v_{\underline{\epsilon}^\prime}=
P_{i,i+1}R_{cm}(z_i-z_{i+1},(\alpha_i,w_{\underline{\epsilon}^{\prime (i)}}\gamma))_{i,i+1}
v_{\underline{\epsilon}^\prime}
\]
(see Theorem \ref{Mwc} for the definitions of 
$A_{cm}(x,y), B_{cm}(x,y)$ and $R_{cm}(x,y)$). For $i=n$ a direct computation using \eqref{linkparameters} gives
\[
{\mathfrak{e}}_{\alpha_n}(x,y)=\mathcal{C}(x,y)
\]
(see \eqref{calC} for the definition of $\mathcal{C}(x,y)$), hence
\[
M_{cm}^{s_n}(\mathbf{z},\xi)v_{\underline{\epsilon}^\prime}=
K_{cm}(z_n,(\alpha_n,w_{\underline{\epsilon}^{\prime (n)}}\gamma))_nv_{\underline{\epsilon}^\prime}
\]
(see Theorem \ref{Mwc} for the definition of $K_{cm}(x,y)$). So the following lemma completes the proof of 
Theorem \ref{Mwc}.
\begin{lem}
Let $1\leq i<n$, $\underline{\epsilon}\in\{\pm 1\}^n$ and $\gamma\in E_{\mathbb{C},I}^\kappa$ (see \eqref{gammaspecial}).
\begin{enumerate}
\item[{\bf (1)}] If $(\epsilon_i,\epsilon_{i+1})\in\{(+1,-1),(-1,+1)\}$ then 
\[
(\alpha_i,w_{\underline{\epsilon}^{(i)}}\gamma)=2\xi-2\kappa(\epsilon_1+\epsilon_2+\cdots+\epsilon_{i-1}).
\]
\item[{\bf (2)}] $(\alpha_n,w_{\underline{\epsilon}^{(n)}}\gamma)=\xi-\kappa(\epsilon_1+\epsilon_2+\cdots+\epsilon_{n-1})$.
\end{enumerate}
\end{lem}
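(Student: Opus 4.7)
The plan is to reduce both parts to explicit coordinate formulas for $w_{\underline{\epsilon}}\gamma$, obtained by unwinding the product \eqref{wepsilon}.

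First I would determine the action of each factor $T_j := s_js_{j+1}\cdots s_n$ of $w_{\underline{\epsilon}}$ on a generic vector $\mathbf{z}=(z_1,\ldots,z_n)\in\mathbb{C}^n$. Iterating the simple reflections (using that $s_r$ swaps coordinates $r,r+1$ for $r<n$ and negates the last coordinate for $r=n$) gives
\[
T_j\mathbf{z}=(z_1,\ldots,z_{j-1},-z_n,z_j,z_{j+1},\ldots,z_{n-1}),
\]
i.e.\ $T_j$ inserts $-z_n$ at position $j$ and shifts the old entries $z_j,\ldots,z_{n-1}$ one step to the right. Applying $T_{i_k}\cdots T_{i_1}$ to $\gamma$ by induction on the number $k$ of sign flips (where $i_1<\cdots<i_k$ are the positions of the $-1$'s in $\underline{\epsilon}$), I would show that the positive positions of $\underline{\epsilon}$, from left to right, receive $\gamma_1,\gamma_2,\ldots,\gamma_{n-k}$, and the negative positions, from left to right, receive $-\gamma_n,-\gamma_{n-1},\ldots,-\gamma_{n-k+1}$. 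Writing $p(l):=\#\{l'\leq l : \epsilon_{l'}=1\}$ and $q(l):=\#\{l'\leq l : \epsilon_{l'}=-1\}$, and recalling $\gamma_m=\xi+(n-2m+1)\kappa$, this becomes
\[
(w_{\underline{\epsilon}}\gamma)_l=
\begin{cases}
\xi+(n-2p(l)+1)\kappa, & \epsilon_l=+1,\\
-\xi+(n-2q(l)+1)\kappa, & \epsilon_l=-1.
\end{cases}
\]

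For part (1), I apply this formula to $\underline{\epsilon}^{(i)}=(\epsilon_1,\ldots,\epsilon_{i-1},+1,-1,\epsilon_{i+1},\ldots,\epsilon_n)$. Since $\alpha_i=e_i-e_{i+1}$, I compute $(\alpha_i,w_{\underline{\epsilon}^{(i)}}\gamma)=(w_{\underline{\epsilon}^{(i)}}\gamma)_i-(w_{\underline{\epsilon}^{(i)}}\gamma)_{i+1}$ using the positive-case formula at $l=i$ and the negative-case formula at $l=i+1$. This yields
\[
(\alpha_i,w_{\underline{\epsilon}^{(i)}}\gamma)=2\xi+2\kappa\bigl(q(i+1)-p(i)\bigr),
\]
and since both $p(i)$ and $q(i+1)$ include exactly one contribution from positions $i,i+1$ of $\underline{\epsilon}^{(i)}$, the difference reduces to
\[
q(i+1)-p(i)=\#\{l<i : \epsilon_l=-1\}-\#\{l<i : \epsilon_l=+1\}=-(\epsilon_1+\cdots+\epsilon_{i-1}),
\]
giving the claimed identity. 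For part (2), $\alpha_n=e_n$, so $(\alpha_n,w_{\underline{\epsilon}^{(n)}}\gamma)=(w_{\underline{\epsilon}^{(n)}}\gamma)_n$; applying the positive-case formula at $l=n$ together with the identity $2p(n)=n+(\epsilon_1+\cdots+\epsilon_{n-1})+1$ immediately simplifies to $\xi-\kappa(\epsilon_1+\cdots+\epsilon_{n-1})$.

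The only delicate point is the inductive verification of the explicit coordinate description of $w_{\underline{\epsilon}}\gamma$; once this is in hand, both parts are elementary counting. The bookkeeping in the induction (keeping track of how each $T_{i_j}$ consumes one more entry from the right end of the currently unshifted tail of $\gamma$) is the main, though routine, obstacle.
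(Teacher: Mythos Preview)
Your proof is correct. The coordinate formula for $w_{\underline{\epsilon}}\gamma$ is right (the induction you describe goes through exactly as stated: each factor $s_js_{j+1}\cdots s_n$ peels off the current last coordinate, negates it, and inserts it at position $j$, leaving earlier positions untouched), and the counting in parts (1) and (2) is accurate.

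Your route differs from the paper's. The paper never writes down $w_{\underline{\epsilon}}\gamma$ globally; instead it moves the Weyl group element to the other side, computing $w_{\underline{\epsilon}}^{-1}(\alpha_i)$ (respectively $w_{\underline{\epsilon}}^{-1}(\alpha_n)$) as an explicit root or sum of two basis vectors, and then pairs with $\gamma$. This forces a case split: the paper treats $(\epsilon_i,\epsilon_{i+1})=(+1,-1)$ directly and reduces $(-1,+1)$ to it via $s_i\underline{\epsilon}$, and similarly handles $\epsilon_n=\pm 1$ separately in part (2). Your approach invests a bit more upfront in the inductive coordinate description, but then both parts and all cases fall out uniformly from the single formula $(w_{\underline{\epsilon}}\gamma)_l=\epsilon_l\xi+(n-2r(l)+1)\kappa$ with $r(l)=p(l)$ or $q(l)$. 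The paper's method is more localized (it only tracks what happens to one root), yours is more systematic; both are elementary and of comparable length.
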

\begin{proof}
Write $V_{\underline{\epsilon}}:=\{j\in \{1,\ldots,n\} \,\, | \,\, \epsilon_j=-1\}
=\{j_1,\ldots,j_r\}$ with $j_1<j_2<\cdots<j_r$.\\
{\bf (1)} Suppose that $(\epsilon_i,\epsilon_{i+1})=(+1,-1)$ and let $1\leq t\leq r$ be the index such that $j_t=i+1$.
Using the expression
\[
w_{\underline{\epsilon}}=(s_{j_r}s_{j_r+1}\cdots s_n)\cdots (s_{j_2}s_{j_2+1}\cdots s_n)(s_{j_1}s_{j_1+1}\cdots s_n)
\]
(see \eqref{wepsilon}) we obtain
\[
w_{\underline{\epsilon}^{(i)}}^{-1}(\alpha_i)=w_{\underline{\epsilon}}^{-1}(\alpha_i)=
e_{i-t+1}+e_{n-t+1}.
\]
Then 
\begin{equation*}
\begin{split}
(\alpha_i,w_{\underline{\epsilon}^{(i)}}\gamma)=\gamma_{i-t+1}+\gamma_{n-t+1}&=
2\xi+(4t-2i-2)\kappa\\
&=2\xi+2\kappa(t-1)-2\kappa(i-t)\\
&=2\xi-2\kappa(\epsilon_1+\cdots+\epsilon_{i-1}).
\end{split}
\end{equation*}

If $(\epsilon_i,\epsilon_{i+1})=(-1,+1)$ then we apply the result of the previous paragraph to $\underline{\epsilon}^\prime:=
s_i\underline{\epsilon}$ to conclude that
\begin{equation*}
\begin{split}
(\alpha_i,w_{\underline{\epsilon}^{(i)}}\gamma)&=
(\alpha_i,w_{\underline{\epsilon}^{\prime (i)}}\gamma)\\
&=2\xi-2\kappa(\epsilon_1^\prime+\cdots+\epsilon_{i-1}^\prime)\\
&=2\xi-2\kappa(\epsilon_1+\cdots+\epsilon_{i-1}).
\end{split}
\end{equation*}
{\bf (2)} Suppose that $\epsilon_n=+1$, so that $j_r<n$. Then
\[
w_{\underline{\epsilon}^{(n)}}^{-1}(\alpha_n)=w_{\underline{\epsilon}}^{-1}(\alpha_n)=e_{n-r}
\]
hence
\begin{equation*}
\begin{split}
(\alpha_n,w_{\underline{\epsilon}^{(n)}}\gamma)&=\gamma_{n-r}\\
&=\xi+\kappa(1-n+2r)=\xi+\kappa r-\kappa(n-1-r)\\
&=\xi-\kappa(\epsilon_1+\epsilon_2+\cdots+\epsilon_{n-1}).
\end{split}
\end{equation*}
If $\epsilon_n=-1$ then we apply this result to $\underline{\epsilon}^\prime:=s_n\underline{\epsilon}$ to reach the same conclusion,
\begin{equation*}
\begin{split}
(\alpha_n,w_{\underline{\epsilon}^{(n)}}\gamma)&=(\alpha_n,w_{\underline{\epsilon}^{\prime (n)}}\gamma)\\
&=\xi-\kappa(\epsilon_1^\prime+\epsilon_2^\prime+\cdots+\epsilon_{n-1}^\prime)\\
&=\xi-\kappa(\epsilon_1+\epsilon_2+\cdots+\epsilon_{n-1}).
\end{split}
\end{equation*}
This completes the proof of the lemma.
\end{proof}


\end{document}